\numberwithin{figure}{section}
\numberwithin{table}{section}
\numberwithin{equation}{section}
\newcommand{\R}{{\mathbb R}}
\newcommand{\Z}{{\mathbb Z}}
\newcommand{\N}{{\mathbb N}}
\newcommand{\C}{{\mathbb C}}
\newcommand{\s}{{\mathbb S}}
\newcommand{\en}{\end{eqnarray}}
\newcommand{\enn}{\end{eqnarray*}}
\newcommand{\curl}{{\rm curl\,}}
\newcommand{\grad}{{\rm grad\,}}
\newcommand{\divv}{{\rm div\,}}
\newtheorem{theorem}{Theorem}[section]
\newtheorem{lemma}[theorem]{Lemma}
\newtheorem{corollary}[theorem]{Corollary}
\newtheorem{definition}[theorem]{Definition}
\newtheorem{remark}[theorem]{Remark}
\definecolor{rot}{rgb}{1.000,0.000,0.000}
\definecolor{blau}{rgb}{0,0,1}
\begin{document}
\captionsetup[figure]{labelfont={bf},name={Fig.},labelsep=period}
	
	\title{Factorization method for inverse time-harmonic elastic scattering with a single plane wave}

	\author{Guanqiu Ma\footnotemark[1]\  \and Guanghui Hu\footnotemark[2]\ }
	
	\date{}
	\maketitle
	
	\renewcommand{\thefootnote}{\fnsymbol{footnote}}
	\footnotetext[1]{Department of Applied Mathematics, Beijing Computational Science Research Center, Beijing 100193, P. R. China (guanqium@csrc.ac.cn).}
	\footnotetext[2]{School of Mathematical Sciences and LPMC, Nankai Universtiy, Tianjin 300071, China (ghhu@nankai.edu.cn, corresponding author).}

	\renewcommand{\thefootnote}{\arabic{footnote}}

	\begin{abstract}
	This paper is concerned with the factorization method with a single far-field pattern to recover an arbitrary convex polygonal scatterer/source in linear elasticity. 
The approach also applies to the compressional (resp. shear) part of the far-field pattern excited by a single compressional (resp. shear) plane wave.  
The one-wave factorization is based on the scattering data for a priori given testing scatterers. It can be regarded as a domain-defined sampling method and does not require forward solvers.  We derive the spectral system of the far-field operator for  rigid disks and show that, using testing disks, the one-wave factorization method can be justified independently of the classical factorization method.


\vspace{.2in} {\bf Keywords}: factorization method, inverse elastic scattering, linear elasticity,  single far-field pattern, polygonal scatterers, corner scattering.
	\end{abstract}

	
    \section{Introduction}
	
The purpose of inverse time-harmonic elastic scattering is to recover the position, shape and physical properties of an elastic body by using information of the scattered wave generated by time-harmonic plane and point source waves. We refer to \cite{Ku1979, MC2005, Habib2015} for a comprehensive introduction of mathematical theory and inverse problems in linear elasticity.

Over the last twenty years,
 sampling-type methods have attracted much attention, because forward solvers and good initial approximations of the target are no longer required,  in contrast with the iterative approaches.  The multi-wave sampling methods do not require a priori information on physical and geometrical properties of the scatterer, but usually need  far-field data for a large number of incident waves. Here we give an incomplete list of the applications to the Navier equation, including linear sampling and factorization methods \cite{Arens, kress2002, Cha2003, Cha2007, HuKirsch2012}, singular source method \cite{Das2008}, orthogonal/direct sampling method \cite{Ji2018}, enclosure method\cite{GS2012} and the Reverse time-migration method in the frequency domain \cite{HC}. On the other hand, there also exists the so-called one-wave sampling methods, which are usually designed to test the analytic extensibility of the scattered field; see the monograph \cite[Chapter 15]{NP2013} for detailed discussions on scalar equations, for instance, range test  and no-response test \cite{K2003, Lin2021, L2003} and enclosure method \cite{I1999, II2009}. The one-wave method requires only a single far-field pattern or one-pair Cauchy data, but one must pre-assume the absence of an analytical continuation across the scattering interface. 

If a single far-field pattern is available only, the inverse scattering problems become severely ill-posed and thus more challenging. 
 This paper is concerned with the one-wave factorization method for recovering a convex rigid elastic body of polygonal type from a single elastic far-field pattern.  Such a method 
 was earlier discussed in \cite{el-hu19} for inverse elastic scattering from rigid polygonal bodies but without too much details. It  
 is closet to the extended linear sampling method \cite{sun2019, S2018} and the one-wave range test method \cite{K2003}.
In the authors' previous work \cite{M-H}, the one-wave factorization method for the Helmholtz equation was rigorously established with the help of corner scattering theory. The connections to the range test and extended linear sampling were also discussed there. 
The one-wave factorization method is a both data-driven and model driven method, and could lead to an explicit characterization of  an arbitrary convex scatterer of polygonal type if the testing scatterers are chosen as disks. 
In this sense, it inherits merits of the classical factorization method for precisely characterizing targets \cite{K2008} but restricted to convex polygonal scatterers/sources. The purpose of this paper is to generalize the mathematical theory of \cite{M-H} to the Navier equation.   The following items  can be considered as  complementary contributions to the previous work \cite{el-hu19}: i) One-wave factorization method using only compressional or shear waves in linear elasticity; ii) Explicit expression of the spectral data for elastic far-field operators corresponding to rigid disks and a straightforward verification of the one-wave factorization method by using testing disks.

 This paper is organized as follows. In Section \ref{sec:2}, we introduce basic concepts of the direct and inverse elastic scattering problems. In Section \ref{sec:3}, the multi-wave factorization method for recovering a rigid scatterers will be briefly reviewed. In Section \ref{sec:4}, we present a rigorous justification of the one-wave method by combining the classical factorization method and elastic corner scattering theory. Explicit examples by using testing  disks will be presented in Section \ref{sec:5}, including 
 derivation of an eigensystem of the far-field operator for a rigid disk.
 Finally, we describe our imaging schemes in Section \ref{sec6}. 
	
	\section{Preliminaries}\label{sec:2}
	In this paper, we will consider the scattering of elastic waves in two-dimensional space $\mathbb{R}^2$. Let $D \subset \mathbb{R}^2$ be a bounded rigid elastic body with connected exterior $D^c \coloneqq \mathbb{R}^2 \backslash \bar{D}$. Let $D^c$ be filled with a homogeneous and isotropic elastic medium. Suppose that a time-harmonic elastic plane wave of the form 
	\begin{equation}\label{2.1}
		u^{i}\left(x; d, c_p, c_s \right) = c_p d \text{e}^{ik_px\cdot d} + c_s d^{\bot} \text{e}^{ik_sx\cdot d}, \quad c_p,c_s \in \mathbb{C}, \quad |c_p|+|c_s|\neq 0,
	\end{equation} 
	is incident onto the scatterer $D$. Here $d = (\cos \theta_d,\sin \theta_d )^T, \theta_d \in \left[0,2\pi\right)$ is the incident direction; $d^{\bot} \coloneqq (-\cos \theta_d, \sin \theta_d)^T$ is a vector orthogonal to $d$; $\omega >0$ is the frequency;  $ k_p \coloneqq \omega / \sqrt{\lambda+2\mu}$ and $k_s \coloneqq \omega / \sqrt{\mu}$ are the compressional and shear wave numbers, respectively. Note that for simplicity the density of the background medium has been normalized to be one and the Lame constants 
$\lambda$ and $\mu$ satisfy $\mu > 0$ and $\lambda +2 \mu > 0$ in two dimensions. The propagation of time-harmonic elastic waves in $D^c$ is governed by the Navier equation (or system)
	\begin{equation}\label{lame-eqn}
		\Delta^{\ast}u + \omega^2 u \coloneqq \mu \vartriangle u + \left( \lambda +\mu \right) \triangledown \left( \triangledown \cdot u\right) + \omega ^2 u = 0 \quad \text{in} \quad D^c, \quad u = \left( u_1, u_2\right)^T,
	\end{equation}
	where $u = u^{i} + u^{s}$ donotes the total displacement field. By Hodge decomposition, any solution $u$ to \eqref{lame-eqn} can be decomposed into the from
	\begin{equation}\label{Hodgedecom}
		u = u_p + u_s, \quad u_p \coloneqq -\frac{1}{k^2_p} \ \text{grad} \ \text{div} \ u, \quad u_s \coloneqq \frac{1}{k^2_s} \ \text{curl} \ \overrightarrow{\text{curl}} \ u, 
	\end{equation}
	where $u_p$ and $u_s$ are called compressional and shear waves respectively. Note that in \eqref{Hodgedecom} the two curl operators are defined as 
	\begin{equation}
		\overrightarrow{\text{curl}} \ u \coloneqq \partial_2 u_1 - \partial_1 u_2, \quad \text{curl} \ f = \left(-\partial_2 f,\partial_1 f\right)^T,
	\end{equation}
	satisfying the relation
	\begin{equation}
		\text{curl} \  \overrightarrow{\text{curl}} \ u = -\Delta u + \text{grad} \ \text{div} \  u.
	\end{equation}
	Moreover, $u_{\alpha}\left(\alpha=p,s\right)$ satisfies the vector Helmholtz equations $\left(\Delta + k^2_{\alpha}\right) u_{\alpha}=0$ and $\overrightarrow{\text{curl}} \ u_p = \text{div} \ u_s = 0$ in $D^c$. 
	In this paper, we require $u^{s}$ to fulfill the Kupradze radiation condition 
	\begin{equation}\label{Kupr-cd}
		\partial_r u^{s}_{\alpha} - ik_{\alpha}u^{s}_{\alpha} = o\left(r^{-\frac{1}{2}}\right) \quad \text{as} \quad r=|x| \to \infty, \quad \alpha = p, s,
	\end{equation}
	uniformly in all directions $\hat{x} = x/|x|$ on the unit circle $\mathbb{S} \coloneqq \left\{x \in \mathbb{R}^2 : |x| = 1\right\}.$ 
It is well known that the direct scattering problem admits one solution $u\in C^2(\R^2\backslash\overline{D})\cap C^1(\R^2\backslash D)$ if $\partial D$ is of $C^2$-smooth (see \cite{Ku1979}) and $u\in (H^1_{loc}(\R^2\backslash\overline{D}))^2$ if $\partial D$ is Lipschitz (see e.g., \cite{Bao2018, Li2016}).
	
This paper is concerned with the inverse scattering problem of recovering $\partial D$ from the information of the far-field pattern of a single incoming plane wave. 
The compressional and shear parts $u^{s}_{\alpha} \left( \alpha =p,s \right)$ of the radiating solution $u^{s}$ admit an asymptotic behavior of the form \cite{el-hu19,PGC1993}
	\begin{equation}
		\begin{split}
			u^{s}_{p}\left( x\right) &= \frac{\text{e}^{ik_pr}}{\sqrt{r}} \left\{ u^{\infty}_p \left( \hat{x} \right) \hat{x} + \mathcal{O}\left( \frac{1}{r}\right)\right\}, \\
			u^{s}_{s}\left( x\right) &= \frac{\text{e}^{ik_sr}}{\sqrt{r}} \left\{ u^{\infty}_s \left( \hat{x} \right) \hat{x}^{\perp} + \mathcal{O}\left( \frac{1}{r}\right)\right\}
		\end{split}
	\end{equation}
	as $r = |x| \to \infty$, where $u^{\infty}_p$ and $u^{\infty}_s$ are both scalar functions defined on $\mathbb{S}$. Hence, a Kupradze radiating solution has the asymptotic behavior 
	\begin{equation}
		u^{s}\left( x \right) = \frac{\text{e}^{ik_pr}}{\sqrt{r}} u^{\infty}_p \left( \hat{x} \right) \hat{x} + \frac{\text{e}^{ik_sr}}{\sqrt{r}} u^{\infty}_s \left( \hat{x} \right) \hat{x}^{\perp } + \mathcal{O}\left( \frac{1}{r^{\frac{3}{2}}}\right) \quad \text{as} \quad r \to \infty.
	\end{equation}
	The far-field pattern $u^{\infty}$ of $u^{s}$ is defined as 
	\begin{equation}
		u^{\infty} \left( \hat{x}\right) \coloneqq u^{\infty}_p \left( \hat{x}\right) \hat{x} +
		u^{\infty}_s \left( \hat{x}\right) \hat{x}^{\perp }.
	\end{equation}
	Then the compressional and shear parts of the far field are uniquely determined by $u^{\infty}$ as 
	\begin{equation}
		u^{\infty}_p \left( \hat{x}\right) = u^{\infty} \left( \hat{x}\right) \cdot \hat{x}, \quad u^{\infty}_s \left( \hat{x}\right) = u^{\infty} \left( \hat{x}\right) \cdot \hat{x}^{\perp}.
	\end{equation}
Introduce the compressional and shear parts of $u^{i}$ by
	\begin{equation}
		u^{i}_p\left(x; d\right)  \coloneqq u^{i}\left(x; d, 1,0 \right), \  u^{i}_s\left(x; d\right)  \coloneqq u^{i}\left(x; d, 0, 1 \right).
	\end{equation}
	Obviously, 
	\begin{eqnarray}
		u^{i}(x; d, c_p, c_s) = c_p u^{i}_p(x; d) + c_s u^{i}_s(x; d).
	\end{eqnarray}
	Correspondingly, for the far-filed pattern, we have
	\begin{equation}\label{u-inftypp}
		\begin{split}
			u^{\infty}_{pp} \left( \hat{x};d\right) & \coloneqq u^{\infty}_p \left( \hat{x}; d, 1,0\right), \\
			u^{\infty}_{sp} \left( \hat{x};d\right) & \coloneqq u^{\infty}_s \left( \hat{x}; d, 1,0\right),\\
			u^{\infty}_{ps} \left( \hat{x};d\right) & \coloneqq u^{\infty}_p \left( \hat{x}; d, 0,1\right),\\
			u^{\infty}_{ss} \left( \hat{x};d\right) & \coloneqq u^{\infty}_s \left( \hat{x};d,  0,1\right).
		\end{split}
	\end{equation}
	Thus, we obtain
	\begin{equation}\label{u-infty}
		u^{\infty} (\hat{x}; d, c_p, c_s) = \left(c_p u^{\infty}_{pp}(\hat{x}) + c_s u^{\infty}_{ps}(\hat{x})\right) \hat{x} + \left(c_p u^{\infty}_{sp}(\hat{x}) + c_s u^{\infty}_{ss}(\hat{x})\right)
		\hat{x}^{\perp},
	\end{equation}
where the dependence of $u^\infty_{\alpha\beta}$ $(\alpha, \beta=p,s)$ on $d$ has been omitted for simplicity.
	In this paper, the following inverse elastic scattering problems will be considered:

	\textbf{IP-P:} Reconstruct the shape and position of the scattere $D$ from  knowledge of the compressional part $u^{\infty}_{pp}(\hat{x})$ of the far-field pattern due to one incident compressional wave $u^{i}_p$.
	
	\textbf{IP-S:} Reconstruct the shape and position of  $D$ from knowledge of the shear part $u^{\infty}_{ss}(\hat{x})$ of the far-field pattern due to one incident shear wave $u^{i}_s$.

	\textbf{IP-F:} Reconstruct the shape and position of  $D$ from using the entire far-field pattern $u^{\infty}(\hat{x})$ due to one incident wave $u^{i}$.

	\section{Factorization method with infinitely many plane waves}\label{sec:3}
	
	\subsection{Review of the classical Factorization method for inverse elastic scattering}

	Given a vector field $g(d)=g_p(d) d + g_s(d) d^{\bot} \in \left(L^2(\mathbb{S})\right)^2$, the superposition of plane waves
	\begin{equation}
		v_g(x):= \text{ e}^{-i\frac{\pi}{4}} \int_{\s} \left\{\sqrt{\frac{k_p}{\omega}}d\text{e}^{ik_px \cdot d}g_p(d) + \sqrt{\frac{k_s}{\omega}}d^{\bot}\text{e}^{ik_sx \cdot d}g_s(d) \right\} ds(d)
	\end{equation}
	is denoted as the elastic Herglotz wave function with density $g$. 
	The Green's tensor of the Navier equation in free space, also called Kupradze's tensor (see e.g., \cite{Arens}), is denoted by
	\begin{equation}
		\Gamma (x,y) \coloneqq \frac{i}{4\mu} H^{(1)}_0(k_s |x-y|) {\bf I} + \frac{i}{4\omega^2}\triangledown^{\bot }_x \triangledown_x\left(H^{(1)}_0(k_s|x-y|) - H^{(1)}_0(k_p|x-y|)\right), \ x,y\in \mathbb{R}^2, \ x \neq y,
	\end{equation}
	where $H^{(1)}_0$ is the Hankel function of the first kind and of order $n$. For any $y \in \R^2$ and any direction $a \in \s$, an elastic point source in $y$ with the polarization $a$ is given by 
	\begin{equation}
		u(x) = \Gamma(x,y)a, \quad x \in \R^2\backslash\{y\}.
	\end{equation}
	The far-field pattern $\Gamma^{\infty}(\cdot,y;a)$ of this point source is given by 
	\begin{equation}
		\Gamma^{\infty}(\hat{x},y;a) = \frac{k^2_p}{\omega^2} \frac{\text{e}^{i\frac{\pi}{4}}}{\sqrt{8\pi k_p}} \text{e}^{-ik_p\hat{x}\cdot y} (\hat{x}\cdot a) \hat{x} + \frac{k^2_s}{\omega^2} \frac{\text{e}^{i\frac{\pi}{4}}}{\sqrt{8\pi k_s}} \text{e}^{-ik_s\hat{x}\cdot y} (\hat{x}^{\bot}\cdot a) \hat{x}^{\bot},
	\end{equation}
with the compressional and shear parts:
	\begin{equation}
	    \Gamma^{\infty}_p(\hat{x},y;a)= \Gamma^{\infty}(\hat{x},y;a) \cdot \hat{x}, \quad \Gamma^{\infty}_s(\hat{x},y;a) = \Gamma^{\infty}(\hat{x},y;a) \cdot \hat{x}^{\bot}.
	\end{equation}
In this paper we define the elastic far-field operator as follows.
	\begin{definition}
		The far-field operator $F_D : \left(L^2(\mathbb{S})\right)^2 \to \left(L^2(\mathbb{S})\right)^2$ is defined by 
		\begin{equation}
			\begin{split}
				(F_D g)(\hat{x})  & \coloneqq \text{ e}^{-i\frac{\pi}{4}} \int_{\mathtt{S}} u^{\infty}_D \left(\hat{x}; d, \sqrt{\frac{k_p}{\omega}}g_p(d), \sqrt{\frac{k_s}{\omega}}g_s(d)\right) ds(d) \\ 
				&= \text{ e}^{-i\frac{\pi}{4}} \int_{\mathtt{S}} \left\{\sqrt{\frac{k_p}{\omega}}u^{\infty}_D(\hat{x}; d, 1, 0)g_p(d) + \sqrt{\frac{k_s}{\omega}}u^{\infty}_D(\hat{x}; d, 0, 1)g_s(d)\right\} ds(d).
			\end{split}
		\end{equation}
	\end{definition}
For rigid elastic bodies, it is well known that $F_D$ is a normal operator. It was proved in \cite[Theorem 4.3]{Arens} that the operator $F_D$ can be decomposed into the form
	\begin{equation}\label{fd-svd}
		F_D = -\sqrt{8\pi \omega} G_D S^{\ast}_D G^{\ast}_D.
	\end{equation}
	Here the data-to-patten operator $G_D : \left(H^{1/2}(\partial D)\right)^2 \to \left(L^2(\mathbb{S})\right)^2$ is defined by $G_D h \coloneqq u^{\infty}$, where $u^{\infty}$ is the far-field pattern of the solution to the Dirichlet boundary value problem of the Navier equation with the boundary value $h$. 
	The operator $S^{\ast}_D:\left(H^{-1/2}(\partial D)\right)^2 \to \left(H^{1/2}(\partial D)\right)^2$ is the adjoint of the elastic single layer potential operator $S_D$, given by 
	\begin{equation}
		S_D \phi (x) \coloneqq \int_{\partial D} \Gamma (x,y) \phi(y) ds(y), \quad x \in \partial D. 
	\end{equation}
	By the Factorization method, the far-field pattern $\Gamma^{\infty}(\cdot,z;a)$ belongs to the range of $G_D$ if and only if $z \in D$ (see \cite[Theorem 4.7]{Arens}). Moreover, the $(F^{\ast}F)^{1/4}$-method (see \cite[Theorem 4.8]{Arens}) verifies the relation $\text{Range} (G_D)= \text{Range} ((F^{\ast}_D F_D)^{1/4})$ if $\omega^2$ is not an eigenvalue of $-\Delta^{\ast}$ over $D$. Hence, by the Picard theorem, the scatterer $D$ can be characterized by the spectra of $F_D$ as follows.
	\begin{theorem}\label{classfm}
		(\cite[Theorem 4.8]{Arens}) Assume that $\omega^2$ is not a Dirichlet eigenvalue of $-\Delta^{\ast}$ over $D$. Denote by $(\lambda^{(n)}_{D}, \varphi^{(n)}_{D})$ a spectrum system of the 
		far-field operator $F_D : \left(L^{2}(\mathbb{S})\right)^2 \rightarrow \left(L^{2}(\mathbb{S})\right)^2$.
		Then,
		  \begin{equation}\label{FD}
			z \in D  
			\Longleftrightarrow
		  	I(z) \coloneqq \sum\limits_{n \in \mathbb{Z}} \frac{\left|\left\langle \Gamma^{\infty}(\cdot,z;a), \varphi_{D}^{(n)}\right\rangle_{\mathbb{S}}\right|^{2}}{\left|\lambda_{D}^{(n)}\right|} < +\infty.
		  \end{equation}
	\end{theorem}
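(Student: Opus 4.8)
The plan is to assemble the statement from three ingredients that are already at our disposal: the factorization \eqref{fd-svd}, the geometric range characterization $\Gamma^{\infty}(\cdot,z;a)\in\mathrm{Range}(G_D)\Leftrightarrow z\in D$ from \cite[Theorem 4.7]{Arens}, and the operator identity $\mathrm{Range}(G_D)=\mathrm{Range}((F_D^{\ast}F_D)^{1/4})$, which holds under the Dirichlet-eigenvalue exclusion. Once these are fixed, the only remaining task is to convert the abstract membership $\Gamma^{\infty}(\cdot,z;a)\in\mathrm{Range}((F_D^{\ast}F_D)^{1/4})$ into the explicit summability criterion $I(z)<+\infty$ by means of Picard's theorem.

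First I would record the spectral data. Since $F_D$ is normal for rigid bodies, the spectral theorem for compact normal operators provides eigenvalues $\{\lambda_D^{(n)}\}_{n\in\Z}$ and eigenfunctions $\{\varphi_D^{(n)}\}_{n\in\Z}$ that can be chosen orthonormal, with $F_D=\sum_{n}\lambda_D^{(n)}\langle\cdot,\varphi_D^{(n)}\rangle_{\s}\varphi_D^{(n)}$. Under the hypothesis that $\omega^2$ is not a Dirichlet eigenvalue of $-\Delta^{\ast}$ over $D$, the operator $G_D$ is injective with dense range, and the factorization \eqref{fd-svd} then forces $F_D$ to be injective; hence every $\lambda_D^{(n)}\neq0$, the eigenfunctions form a complete orthonormal system of $(L^2(\s))^2$, and the denominators appearing in $I(z)$ are well defined. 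From the normal form one computes $F_D^{\ast}F_D=\sum_{n}|\lambda_D^{(n)}|^2\langle\cdot,\varphi_D^{(n)}\rangle_{\s}\varphi_D^{(n)}$, so that the positive self-adjoint operator $A\coloneqq(F_D^{\ast}F_D)^{1/4}$ shares the eigenfunctions $\varphi_D^{(n)}$ and has singular values $\sigma_n=|\lambda_D^{(n)}|^{1/2}$.

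Next I would apply Picard's theorem to $A$. Because $A$ is injective we have $\overline{\mathrm{Range}(A)}=(L^2(\s))^2$, so the closure condition is automatic and the sole obstruction to solving $A\psi=f$ is the Picard series. Thus $f\in\mathrm{Range}(A)$ if and only if $\sum_{n}|\langle f,\varphi_D^{(n)}\rangle_{\s}|^2/\sigma_n^2<\infty$, that is, $\sum_{n}|\langle f,\varphi_D^{(n)}\rangle_{\s}|^2/|\lambda_D^{(n)}|<\infty$. Taking $f=\Gamma^{\infty}(\cdot,z;a)$ this is exactly the condition $I(z)<+\infty$, and chaining the three equivalences gives
\[
z\in D\;\Longleftrightarrow\;\Gamma^{\infty}(\cdot,z;a)\in\mathrm{Range}(G_D)=\mathrm{Range}\big((F_D^{\ast}F_D)^{1/4}\big)\;\Longleftrightarrow\;I(z)<+\infty,
\]
which is the desired assertion.

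The main obstacle is not this final bookkeeping but the two structural inputs on which it rests. Establishing $\mathrm{Range}(G_D)=\mathrm{Range}((F_D^{\ast}F_D)^{1/4})$ requires the abstract range-identity theorem of the factorization method applied to \eqref{fd-svd}, whose hypotheses (coercivity up to a compact perturbation, together with the sign and normality properties of the middle operator $S_D^{\ast}$) must be verified for the elastic single-layer operator; this is precisely where the Dirichlet-eigenvalue exclusion is used, ensuring the injectivity of $G_D$ and the absence of a spurious kernel. The geometric characterization $\Gamma^{\infty}(\cdot,z;a)\in\mathrm{Range}(G_D)\Leftrightarrow z\in D$ in turn hinges on the singular behaviour of Kupradze's tensor at the source point $z$ and on a unique-continuation argument for the Navier system. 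Both inputs are furnished by \cite[Theorems 4.7--4.8]{Arens}, so within the present scope the proof reduces to the spectral computation above.
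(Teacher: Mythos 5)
Your proposal is correct and follows essentially the same route the paper takes: the paper does not give a separate proof but derives the statement in the paragraph preceding it by combining \cite[Theorem 4.7]{Arens} (the range characterization $\Gamma^{\infty}(\cdot,z;a)\in\mathrm{Range}(G_D)\Leftrightarrow z\in D$), the $(F^{\ast}F)^{1/4}$-range identity of \cite[Theorem 4.8]{Arens}, and Picard's theorem, exactly as you do. Your spectral computation identifying the singular values of $(F_D^{\ast}F_D)^{1/4}$ with $|\lambda_D^{(n)}|^{1/2}$ is the correct bookkeeping that turns the range condition into the summability criterion $I(z)<+\infty$.
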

By Theorem \ref{classfm}, the sign of the indicator function $I(z)$ can be regarded as the characteristic function of $D$. We note that in \eqref{FD}, $z\in \R^2$ are the sampling variables/points and the spectral data $(\lambda^{(n)}_{D}, \varphi^{(n)}_{D})$ are determined by the far-field patterns $u_D^\infty(\hat{x},d)$ over all observation and incident directions $\hat{x}, d\in \s$.

Below we state the Factorization method which involves only the compressional or shear plane waves.
Introduce the projection space $L^2_p(\s) \coloneqq \left\{g_p : \s \to \C, g_p(d) = g(d) \cdot d, |g_p| \in L^2(\s)\right\}$ and $L^2_s(\s) \coloneqq \left\{g_s : \s \to \C, g_s(d) = g(d) \cdot d^{\bot}, |g_s| \in L^2(\s)\right\}$.  Define the projection operators $P_p : \left(L^2(\s)\right)^2 \to L^2_p(\s)$ and $P_s : \left(L^2(\s)\right)^2 \to L^2_s(\s)$ (see e.g., \cite{HuKirsch2012}) by
	\begin{equation}
		P_pg(d) := g_p(d), \quad P_sg(d):=g_s(d).
	\end{equation}
	Thus, we can define the P-part and S-part of the far-field operator $F_D$. 
	\begin{definition}\label{def-ps}
		The far-field operator $F^{(p)}_D : L^2_p(\s) \to L^2_p(\s)$ is defined by 
		\begin{equation}
			F^{(p)}_D g_p(d) \coloneqq P_p F_D P^{\ast}_p g_p(d).	
		\end{equation}
	Analogously,	the far-field operator $F^{(s)}_D : L^2_s(\s) \to L^2_s(\s)$ is defined by
		\begin{equation}
			F^{(s)}_D g_s(d) \coloneqq P_s F_D P^{\ast}_s g_s(d).	
		\end{equation}
Here	$P^{\ast}_p$ and $P^{\ast}_s$ are the adjoint operators of $P_p$ and $P_s$, respectively.
	\end{definition}
 By \eqref{fd-svd}, we have the factorization 
	\begin{equation}\label{3.13}
		F^{(\alpha)}_D = -\sqrt{8\pi \omega}(P_{\alpha}G_D)S^{\ast}_D(P_{\alpha}G_D)^{\ast}, 
	\end{equation}
	where $\alpha = p, s$.
Based on (\ref{3.13}), the $F_{\#}$-method (see \cite[Lemma 3.5]{HuKirsch2012}) verifies the relation $\text{Range} (P_{\alpha}G_D)= \text{Range} ((F^{(\alpha)}_{D,\#} )^{1/2})$, provided $\omega^2$ is not a Dirichlet eigenvalue of $-\Delta^{\ast}$ over $D$. Here the operator $F_{\#}$ is defined by 
	\begin{equation}
		F_{\#} \coloneqq |\text{Re}F| + |\text{Im}F|, \quad \text{Re}F \coloneqq \frac{1}{2}[F+F^{\ast}], \quad \text{Im}F \coloneqq \frac{1}{2i}[F-F^{\ast}].
	\end{equation}
	Hence, the scatterer $D$ can be characterized by the spectra of $F^{(\alpha)}_D$ as follows.
	\begin{theorem}
		(\cite[Theorem 3.7, 3.8]{HuKirsch2012}) Assume that $\omega^2$ is not a Dirichlet eigenvalue of $-\Delta^{\ast}$ over $D$. Denote by $(\lambda^{(n)}_{D,\alpha}, \varphi^{(n)}_{D,\alpha})$ a spectrum system of the positive operator $F^{(\alpha)}_{D,\#}$.
		Then,
		  \begin{equation}
			z \in D  
			\Longleftrightarrow
		  	I^{(\alpha)}(z) \coloneqq \sum\limits_{n \in \mathbb{Z}} \frac{\left|\left\langle \Gamma^{\infty}_{\alpha}(\cdot,z;a), \varphi_{D,\alpha}^{(n)}\right\rangle_{\mathbb{S}}\right|^{2}}{\left|\lambda_{D,\alpha}^{(n)}\right|} < +\infty, \quad \alpha=p, s.
		  \end{equation}
	\end{theorem}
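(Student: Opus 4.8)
The plan is to reduce the spectral characterization of $D$ to two ingredients: a range identity of the form $\text{Range}(P_{\alpha}G_D) = \text{Range}((F^{(\alpha)}_{D,\#})^{1/2})$, and a geometric characterization of when the $\alpha$-part of a point-source far-field pattern lies in $\text{Range}(P_{\alpha}G_D)$. Granting these, the claimed equivalence is immediate from Picard's theorem. Indeed, since $F^{(\alpha)}_{D,\#}$ is a positive self-adjoint (compact) operator, its eigensystem $(\lambda^{(n)}_{D,\alpha}, \varphi^{(n)}_{D,\alpha})$ satisfies $\lambda^{(n)}_{D,\alpha} = |\lambda^{(n)}_{D,\alpha}| > 0$, and $(F^{(\alpha)}_{D,\#})^{1/2}$ shares the same eigenfunctions with singular values $(\lambda^{(n)}_{D,\alpha})^{1/2}$. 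Picard's theorem then states that $\psi \in L^2_{\alpha}(\s)$ belongs to $\text{Range}((F^{(\alpha)}_{D,\#})^{1/2})$ if and only if $\sum_n |\langle \psi, \varphi^{(n)}_{D,\alpha}\rangle_{\s}|^2 / |\lambda^{(n)}_{D,\alpha}| < \infty$. Applying this with $\psi = \Gamma^{\infty}_{\alpha}(\cdot, z; a)$ converts the finiteness of $I^{(\alpha)}(z)$ exactly into the membership $\Gamma^{\infty}_{\alpha}(\cdot, z; a) \in \text{Range}((F^{(\alpha)}_{D,\#})^{1/2})$.

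For the range identity I would invoke the abstract $F_{\#}$ framework of \cite[Lemma 3.5]{HuKirsch2012} applied to the factorization \eqref{3.13}, namely $F^{(\alpha)}_D = -\sqrt{8\pi\omega}\,(P_{\alpha}G_D)\, S^{\ast}_D\, (P_{\alpha}G_D)^{\ast}$. The framework requires two checks: first, that $P_{\alpha}G_D$ is injective with dense range on the relevant trace space, which I would deduce from the injectivity and dense range of $G_D$ under the hypothesis that $\omega^2$ is not a Dirichlet eigenvalue of $-\Delta^{\ast}$ over $D$, together with the mapping properties of $P_{\alpha}$; and second, that the middle operator $S^{\ast}_D$ satisfies the required coercivity and sign condition on $(H^{-1/2}(\partial D))^2$. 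This second point is the elasticity-specific analytic input: it rests on the decomposition of the elastic single-layer operator $S_D$ into a coercive principal part plus a compact remainder, and on the definite sign of its imaginary part governed by the radiated far-field energy. Once both are in place, the $F_{\#}$-method yields $\text{Range}(P_{\alpha}G_D) = \text{Range}((F^{(\alpha)}_{D,\#})^{1/2})$.

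The remaining and most delicate step is the geometric range characterization $\Gamma^{\infty}_{\alpha}(\cdot, z; a) \in \text{Range}(P_{\alpha}G_D) \Longleftrightarrow z \in D$. For the full operator this is already known from \cite[Theorem 4.7]{Arens}: the scattered field of a point source at $z$ extends as a Kupradze-radiating solution up to $\partial D$ precisely when the singularity at $z$ lies inside $D$, so $\Gamma^{\infty}(\cdot, z; a) \in \text{Range}(G_D)$ iff $z \in D$. To pass to the projected statement I would use that $P_{\alpha}G_D$ is the composition of the Dirichlet-to-far-field map $G_D$ with the extraction of the $\alpha$-component, so that $P_{\alpha}\Gamma^{\infty}(\cdot, z; a)$ is, under the identification in $L^2_{\alpha}(\s)$, exactly $\Gamma^{\infty}_{\alpha}(\cdot, z; a)$; the ``if'' direction then follows directly from the full-operator result. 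The ``only if'' direction is the main obstacle: one must exclude that $\Gamma^{\infty}_{\alpha}(\cdot, z; a)$ accidentally lies in $\text{Range}(P_{\alpha}G_D)$ for some $z \notin D$. I would handle this by the continuation argument familiar from the scalar case --- assuming membership, produce a density $h$ with $P_{\alpha}G_D h = \Gamma^{\infty}_{\alpha}(\cdot, z; a)$, apply Rellich's lemma and unique continuation for the Navier equation to identify the associated near fields, and derive a contradiction with the singularity of $\Gamma(\cdot, z)$ at $z$ when $z$ lies outside $D$. The point specific to the projected setting is that only one component of the far field is prescribed, so one must argue that matching the compressional (resp. shear) part alone already forces the full radiating field to agree; this follows from the independent determination of $u^s_p$ and $u^s_s$ by their respective scalar far-field patterns $u^{\infty}_{\alpha}$, i.e. from the one-to-one correspondence recorded in Section \ref{sec:2}.
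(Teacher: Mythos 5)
The paper does not prove this theorem: it is quoted verbatim from \cite[Theorems 3.7, 3.8]{HuKirsch2012}, and the only supporting material in the text is the factorization \eqref{3.13} together with the range identity $\mathrm{Range}(P_{\alpha}G_D)=\mathrm{Range}((F^{(\alpha)}_{D,\#})^{1/2})$ from \cite[Lemma 3.5]{HuKirsch2012}. Your outline reproduces exactly that route (range identity via the $F_{\#}$ framework, Picard's theorem, and the geometric characterization of $\mathrm{Range}(P_{\alpha}G_D)$ by the point-source singularity), so it is consistent with the proof the paper is implicitly relying on.

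One imprecision in your final step: you assert that matching the compressional (resp.\ shear) far-field pattern alone ``forces the full radiating field to agree.'' That is not true --- Rellich's lemma applied to $u^{\infty}_{\alpha}$ only identifies the $\alpha$-components $u^{s}_{\alpha}$ of the near fields (each satisfies a Helmholtz equation with wavenumber $k_{\alpha}$, so its own far-field pattern determines it), and says nothing about the complementary component. Fortunately the contradiction for $z\notin D$ does not need the full fields: it suffices that the $\alpha$-component of $\Gamma(\cdot,z)a$ is itself singular at $z$ while the $\alpha$-component of the radiating solution with $H^{1/2}$ boundary data on $\partial D$ is analytic there. So the argument goes through, but you should drop the claim about the full field and argue directly with the scalar $\alpha$-parts.
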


	\subsection{Further discussions on Factorization method}
Before stating the one-wave version of the factorization method for inverse elastic scattering, we first present a corollary of Theorems \ref{classfm}. Denote by $\Omega\subset \R^2$ a convex and bounded Lipschitz domain which represents a rigid elastic scatterer.  Here we use a new notation $\Omega$ in order to distinguish from our target scatterer $D$. The far-field operator $F_{\Omega} : \left(L^{2}(\mathbb{S})\right)^2 \rightarrow \left(L^{2}(\mathbb{S})\right)^2$ corresponding to $\Omega$ is therefore defined by
	\begin{equation}\label{op-domain}
		(F_{\Omega} g)(\hat{x}) \coloneqq \text{ e}^{-i\frac{\pi}{4}} \int_{\mathbb{S}} \left\{\sqrt{\frac{k_p}{\omega}}u^{\infty}_{\Omega}(\hat{x}; d, 1, 0)g_p(d) + \sqrt{\frac{k_s}{\omega}}u^{\infty}_{\Omega}(\hat{x}; d, 0, 1)g_s(d)\right\} ds(d),
	\end{equation}
	where $u^{\infty}_{\Omega} (\hat{x}; d, 1, 0)$ and $u^{\infty}_{\Omega} (\hat{x}; d, 0, 1)$ are the far-field patterns corresponding to the elastic plane waves $u^{i}_p(x; d)$ and $u^{i}_s(x; d)$ incident onto $\Omega$, respectively. The eigenvalues and eigenfunctions of $F_{\Omega}$ will be denoted by $(\lambda^{(n)}_{\Omega}, \varphi^{(n)}_{\Omega})$.
The following Corollaries can be derived straightforwardly from the classical Factorization method in the previous subsection.

	\begin{corollary}\label{fm_convex_domain} Let $v^\infty\in \left(L^2(\s)\right)^2$ and assume that
		$\omega^2$ is not a Dirichlet eigenvalue of $-\Delta^{\ast}$ over $\Omega$. Then 
	   \begin{equation}
		   I(\Omega) = \sum_{n \in \mathbb{Z}} \frac{\left|\left\langle v^{\infty}, \varphi^{(n)}_{\Omega}\right\rangle_{\mathbb{S}}\right|^{2}}{\left|\lambda^{(n)}_{\Omega}\right|} < + \infty
	   \end{equation}
		   if and only if $v^{\infty}$ is the far-filed pattern of some Kupradze radiating solution $v^{s}$, where $v^{s}$ satisfies the Navier equation
	   \begin{equation}\label{Navier}
		   \Delta^{\ast} v^{s} + \omega^{2} v^{s} = 0 \qquad \text{in} \quad  \mathbb{R}^2 \backslash \overline{\Omega},
	   \end{equation}
		with the boundary data $v^{s} |_{\partial \Omega}\in \left(H^{1/2}(\partial \Omega)\right)^2.$
	   \end{corollary}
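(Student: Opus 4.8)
The plan is to follow the same machinery that underlies Theorem \ref{classfm}, but with the arbitrary test pattern $v^\infty$ in place of $\Gamma^\infty(\cdot,z;a)$ and without the geometric identification $z\in D$. Concretely, I would recognize $I(\Omega)<+\infty$ as a Picard-type criterion for membership of $v^\infty$ in the range of the data-to-pattern operator $G_\Omega$, and then identify that range with the stated family of far-field patterns. Since $\omega^2$ is not a Dirichlet eigenvalue of $-\Delta^{\ast}$ over $\Omega$, the $(F^{\ast}F)^{1/4}$-method (the elastic version of \cite[Theorem 4.8]{Arens}, applied with $\Omega$ in place of $D$) yields the range identity $\mathrm{Range}(G_\Omega)=\mathrm{Range}((F_\Omega^{\ast}F_\Omega)^{1/4})$. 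It therefore suffices to establish two equivalences: that $I(\Omega)<+\infty$ characterizes $\mathrm{Range}((F_\Omega^{\ast}F_\Omega)^{1/4})$, and that $\mathrm{Range}(G_\Omega)$ is exactly the collection of far-field patterns of Kupradze radiating solutions with $(H^{1/2}(\partial\Omega))^2$ Dirichlet data.

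For the first equivalence I would exploit the normality of $F_\Omega$, which holds for rigid bodies. Normality guarantees that the eigenfunctions $\{\varphi^{(n)}_\Omega\}_{n\in\mathbb{Z}}$ form an orthonormal basis of $\overline{\mathrm{Range}(F_\Omega)}$ and that the self-adjoint positive operator $(F_\Omega^{\ast}F_\Omega)^{1/4}$ is diagonalized in the very same basis, with eigenvalues $|\lambda^{(n)}_\Omega|^{1/2}$. Applying Picard's theorem to $(F_\Omega^{\ast}F_\Omega)^{1/4}$, whose singular values are precisely these $|\lambda^{(n)}_\Omega|^{1/2}$, gives that $v^\infty\in\mathrm{Range}((F_\Omega^{\ast}F_\Omega)^{1/4})$ if and only if
\begin{equation*}
\sum_{n\in\mathbb{Z}}\frac{\left|\left\langle v^\infty,\varphi^{(n)}_\Omega\right\rangle_{\mathbb{S}}\right|^2}{\bigl(|\lambda^{(n)}_\Omega|^{1/2}\bigr)^2}=\sum_{n\in\mathbb{Z}}\frac{\left|\left\langle v^\infty,\varphi^{(n)}_\Omega\right\rangle_{\mathbb{S}}\right|^2}{|\lambda^{(n)}_\Omega|}=I(\Omega)<+\infty,
\end{equation*}
which is exactly the stated series criterion.

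For the second equivalence I would unwind the definition of $G_\Omega$. By construction $G_\Omega h=u^\infty$, where $u^\infty$ is the far-field pattern of the solution to the exterior Dirichlet problem for the Navier equation in $\mathbb{R}^2\setminus\overline{\Omega}$ with boundary value $h$ and the Kupradze radiation condition. The exterior Dirichlet problem is uniquely solvable for every $h\in(H^{1/2}(\partial\Omega))^2$, so the map $h\mapsto v^{s}\mapsto v^\infty$ is well defined and surjective onto the set of far-field patterns of Kupradze radiating solutions whose Dirichlet trace lies in $(H^{1/2}(\partial\Omega))^2$. Hence $v^\infty\in\mathrm{Range}(G_\Omega)$ if and only if $v^\infty$ is the far-field pattern of some such $v^{s}$ satisfying \eqref{Navier} with $v^{s}|_{\partial\Omega}\in(H^{1/2}(\partial\Omega))^2$. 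Chaining the three equivalences---the Picard criterion, the range identity $\mathrm{Range}((F_\Omega^{\ast}F_\Omega)^{1/4})=\mathrm{Range}(G_\Omega)$, and this description of $\mathrm{Range}(G_\Omega)$---completes the argument.

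The step I expect to require the most care is the first: justifying that the eigensystem $(\lambda^{(n)}_\Omega,\varphi^{(n)}_\Omega)$ of $F_\Omega$ genuinely provides a singular system for $(F_\Omega^{\ast}F_\Omega)^{1/4}$ with singular values $|\lambda^{(n)}_\Omega|^{1/2}$. This rests entirely on the normality of $F_\Omega$ and on the hypothesis that $\omega^2$ is not a Dirichlet eigenvalue, which secures both the injectivity of $G_\Omega$ and the range identity underlying the $(F^{\ast}F)^{1/4}$-method; once these are in hand, the remaining manipulations are the routine Picard bookkeeping recorded above.
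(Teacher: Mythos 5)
Your proposal is correct and follows essentially the same route as the paper's own proof: the Picard criterion identifies $I(\Omega)<+\infty$ with $v^\infty\in\mathrm{Range}((F_\Omega^{\ast}F_\Omega)^{1/4})$, the $(F^{\ast}F)^{1/4}$-method gives $\mathrm{Range}((F_\Omega^{\ast}F_\Omega)^{1/4})=\mathrm{Range}(G_\Omega)$, and the definition of $G_\Omega$ yields the characterization by Kupradze radiating solutions with $(H^{1/2}(\partial\Omega))^2$ data. You merely spell out in more detail (normality of $F_\Omega$, the eigenbasis diagonalizing $(F_\Omega^{\ast}F_\Omega)^{1/4}$ with singular values $|\lambda^{(n)}_\Omega|^{1/2}$) what the paper dismisses as ``obvious.''
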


	   \begin{proof}
		By \eqref{fd-svd}, we have
		$F_{\Omega} = -\sqrt{8\pi \omega} G_{\Omega} S^*_{\Omega} G^*_{\Omega}$,
		where $G_{\Omega}: \left(H^{1/2}(\partial \Omega)\right)^2 \to \left(L^2(\mathbb{S})\right)^2$ is the data-to-pattern operator corresponding to $\Omega$.
		Obviously, $I(\Omega) <+\infty$ if and only if $v^{\infty} \in \text{Range}((F^*_{\Omega} F_{\Omega})^{1/4})$. Since $\text{Range}((F^*_{\Omega} F_{\Omega})^{1/4}) = \text{Range}(G_{\Omega})$, we get $v^{\infty} \in \text{Range}(G_{\Omega})$ if and only if $I(\Omega) <+\infty$.
		Recalling the definition of $G_{\Omega}$, it follows that $v^{s}$ satisfies the Navier equation \eqref{Navier} and the Kupradze radiation condition \eqref{Kupr-cd} with the boundary data $v^{s} |_{\partial \Omega}\in \left(H^{1/2}(\partial \Omega)\right)^2.$
		\end{proof}

		\begin{corollary}\label{fm_convex_domainpp} 
			Let $w^\infty_{\alpha\alpha} \in L^2_{\alpha}(\s)$($\alpha = p, s$) and assume that
			$\omega^2$ is not a Dirichlet eigenvalue of $-\Delta^{\ast}$ over $\Omega$. Denote by $(\lambda^{(n)}_{\Omega,\alpha}, \varphi^{(n)}_{\Omega,\alpha})$ a spectrum system of the positive operator $F^{(\alpha)}_{\Omega,\#}$. Then 
		   \begin{equation}\label{OP}
			   I^{(\alpha)}(\Omega) = \sum_{n \in \mathbb{Z}} \frac{\left|\left\langle w^\infty_{\alpha\alpha}, \varphi^{(n)}_{\Omega, \alpha}\right\rangle_{\mathbb{S}}\right|^{2}}{\left|\lambda^{(n)}_{\Omega, \alpha}\right|} < + \infty
		   \end{equation}
			   if and only if $w^\infty_{\alpha\alpha}= P_{\alpha} v^{\infty}$, where $v^{\infty}$ is the far-field pattern of some Kupradze radiating solution $v^{s}$, which is defined in $\mathbb{R}^2 \backslash \overline{\Omega}$ and $v^{s} |_{\partial \Omega}\in \left(H^{1/2}(\partial \Omega)\right)^2$. 	
That is,	$w^\infty_{\alpha\alpha}$ is the far-field pattern of some Sommerfeld radiating solution $w^s_{\alpha\alpha}$, fulfilling the relations
 \begin{equation}\label{wsp}
 w_{pp}^{s} = -\frac{1}{k_p^2}\divv v^s\quad \mbox{if}\quad\alpha = p;\quad w_{ss}^{s} = \frac{1}{k_s^2}\curl v^s\quad\mbox{if} \quad \alpha = s, 
 \end{equation}
 where $v^s$ satisfies the boundary value problem of the Navier equation 
		   \begin{equation}\label{navierbp}
			   \Delta^{\ast} v^{s} + \omega^{2} v^{s} = 0 \ \text{in} \  \mathbb{R}^2 \backslash \overline{\Omega}, \quad v^{s} |_{\partial \Omega}\in \left(H^{1/2}(\partial \Omega)\right)^2.
		   \end{equation}
		\end{corollary}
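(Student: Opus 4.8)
The plan is to transcribe the proof of Corollary~\ref{fm_convex_domain} to the projected spaces $L^2_\alpha(\s)$, replacing the operator $(F^\ast_\Omega F_\Omega)^{1/4}$ by the self-adjoint positive operator $F^{(\alpha)}_{\Omega,\#}$. The three ingredients I would assemble are: the factorization \eqref{3.13}, namely $F^{(\alpha)}_\Omega = -\sqrt{8\pi\omega}\,(P_\alpha G_\Omega)S^\ast_\Omega (P_\alpha G_\Omega)^\ast$; the $F_\#$-method identity $\text{Range}(P_\alpha G_\Omega) = \text{Range}((F^{(\alpha)}_{\Omega,\#})^{1/2})$, which is valid precisely because $\omega^2$ is not a Dirichlet eigenvalue of $-\Delta^\ast$ over $\Omega$; and Picard's theorem applied to the compact positive operator $(F^{(\alpha)}_{\Omega,\#})^{1/2}$.

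First I would note that, since $(\lambda^{(n)}_{\Omega,\alpha},\varphi^{(n)}_{\Omega,\alpha})$ is an eigensystem of the positive operator $F^{(\alpha)}_{\Omega,\#}$, the square root $(F^{(\alpha)}_{\Omega,\#})^{1/2}$ has singular values $\sqrt{\lambda^{(n)}_{\Omega,\alpha}}$ with the same eigenfunctions. Picard's theorem then gives $w^\infty_{\alpha\alpha}\in\text{Range}((F^{(\alpha)}_{\Omega,\#})^{1/2})$ if and only if $\sum_n |\langle w^\infty_{\alpha\alpha},\varphi^{(n)}_{\Omega,\alpha}\rangle_\s|^2/\lambda^{(n)}_{\Omega,\alpha}<+\infty$, which is exactly $I^{(\alpha)}(\Omega)<+\infty$ (here $|\lambda^{(n)}_{\Omega,\alpha}|=\lambda^{(n)}_{\Omega,\alpha}$ by positivity). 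Combining this with the $F_\#$-identity yields $I^{(\alpha)}(\Omega)<+\infty \Longleftrightarrow w^\infty_{\alpha\alpha}\in\text{Range}(P_\alpha G_\Omega)$. Since $\text{Range}(P_\alpha G_\Omega)=P_\alpha(\text{Range}(G_\Omega))$ and $\text{Range}(G_\Omega)$ was identified in Corollary~\ref{fm_convex_domain} as the set of far-field patterns of Kupradze radiating solutions $v^s$ with $v^s|_{\partial\Omega}\in(H^{1/2}(\partial\Omega))^2$, this already produces the first equivalence: $I^{(\alpha)}(\Omega)<+\infty$ iff $w^\infty_{\alpha\alpha}=P_\alpha v^\infty$ for some such $v^s$.

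It then remains to recast the projected pattern $P_\alpha v^\infty$ as the far-field pattern of the scalar Sommerfeld solution $w^s_{\alpha\alpha}$ of \eqref{wsp}. For this I would invoke the Hodge decomposition \eqref{Hodgedecom}: writing $v^s=v^s_p+v^s_s$, the compressional part is curl-free and the shear part divergence-free, so that $v^s_p=\grad w^s_{pp}$ and $v^s_s=\curl w^s_{ss}$ with $w^s_{pp}=-k_p^{-2}\divv v^s$ and $w^s_{ss}=k_s^{-2}\,\overrightarrow{\text{curl}}\,v^s$. Applying $\divv$ (respectively $\overrightarrow{\text{curl}}$) to the Navier equation \eqref{navierbp} and using $\overrightarrow{\text{curl}}\,\grad=0$ and $\divv\,\curl=0$ shows that $w^s_{\alpha\alpha}$ solves the scalar Helmholtz equation $(\Delta+k_\alpha^2)w^s_{\alpha\alpha}=0$, while the Kupradze radiation condition \eqref{Kupr-cd} transfers to the scalar Sommerfeld condition for $w^s_{\alpha\alpha}$. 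A term-by-term expansion of the far-field asymptotics of $v^s$ recorded in Section~\ref{sec:2} then matches the scalar far-field of $w^s_{\alpha\alpha}$ with $P_\alpha v^\infty$ up to the explicit constant arising from differentiating $e^{ik_\alpha r}/\sqrt r$.

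The step I expect to be the main obstacle is precisely this last far-field identification. One must differentiate the vector asymptotics and verify that the P-part of $v^\infty$ (the coefficient of $\hat x$) equals the scalar far-field of $w^s_{pp}$ up to the factor $ik_p$, and similarly that the S-part equals that of $w^s_{ss}$ up to $ik_s$; one then checks that these nonzero constants affect neither membership in $L^2_\alpha(\s)$ nor the finiteness of $I^{(\alpha)}(\Omega)$, so that the two formulations of the criterion are genuinely equivalent. Everything else is a routine transcription of Corollary~\ref{fm_convex_domain} to the projected setting.
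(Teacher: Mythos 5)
Your proposal is correct and follows essentially the same route as the paper: Picard's theorem applied to the positive operator $F^{(\alpha)}_{\Omega,\#}$, the $F_\#$-range identity $\text{Range}(P_\alpha G_\Omega)=\text{Range}((F^{(\alpha)}_{\Omega,\#})^{1/2})$, and the Hodge decomposition to identify $P_\alpha v^\infty$ with the far-field pattern of $w^s_{\alpha\alpha}$. Your treatment is in fact slightly more careful than the paper's on two points — applying Picard to the square root $(F^{(\alpha)}_{\Omega,\#})^{1/2}$ rather than to $F^{(\alpha)}_{\Omega,\#}$ itself, and tracking the constants $ik_\alpha$ in the scalar far-field identification — but these are refinements of the same argument, not a different one.
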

Similar to Corollary \ref{fm_convex_domain},  Corollary \ref{fm_convex_domainpp} can be proved by using the projection operator $P_{\alpha}$ and the $F_{\#}$-method. 
		\begin{proof}
		Without loss of generality, we assume that the relation (\ref{OP}) holds with $\alpha=p$. By the Picard theorem,   $w^\infty_{pp} \in  \text{Range}(F^{(p)}_{\Omega, \#}) = \text{Range}(P_pG_{\Omega})$ if and only if the indicator function $I^{(p)}(\Omega)  < + \infty$. This means that $w^\infty_{pp}=P_p v^{\infty}$, where $v^{\infty}$ is the far-field pattern of a Kupradze's radiating solution $v^s$ to the boundary value problem \eqref{navierbp}. Using the Hodge decomposition, we see $w^\infty_{pp}$ is the far-field pattern of $w^s_{pp}$, where $w^s_{pp}=-\frac{1}{k_p^2}\text{div} v^s$. 

		On the other hand, let $v^s$ be a solution to the boundary value problem \eqref{navierbp} and define  $w_{pp}^{s} := -\frac{1}{k_p^2}\divv v^s$. Suppose that 
 $w^\infty_{pp}$ is the far-field pattern of $w^s_{pp}$. Then, by the Hodge decomposition  it follows that $w^\infty_{pp} = P_p v^{\infty}=P_p G_\Omega(v^s|_{\partial \Omega})$, implying that $w^\infty_{pp} \in \text{Range}(P_pG_{\Omega})=\text{Range}(F^{(p)}_{\Omega, \#}) $. Applying the Picard theorem yields $I^{(p)}(\Omega) < + \infty$.		
		\end{proof}
\begin{remark}
It follows from (\ref{wsp}) that the restrictions of $w^s_{pp}$, $w^s_{ss}$ to $\partial \Omega$ lie in the space $H^{-1/2}(\partial\Omega)$.
\end{remark}

	\section{Factorization method with one plane waves}\label{sec:4}


		In our applications of Corollary \ref{fm_convex_domain} (resp. Corollary \ref{fm_convex_domainpp}), we will take $v^\infty$ to be the measurement data $u_D^\infty(\hat x; d_0, c_p, c_s)$ (resp. $u_{D,\alpha\alpha}^\infty(\hat x; d_0)$, $\alpha=p,s$)	corresponding to our target elastic scatterer $D$ and the incident elastic plane wave $u^{i}(x; d_0, c_p, c_s)$ (resp. $u^{i}_\alpha(\hat{x};d_0)$) for some fixed $d_0\in\s$. We shall omit the dependance on $d_0$ if it is always clear from the context.  Our purpose is to extract the geometrical information on $D$ from the domain-defined indicator functions $I(\Omega)$ and $I^{(\alpha)}(\Omega)$ ($\alpha=p,s$). By Corollary \ref{fm_convex_domain}, $I(\Omega)<\infty$ if the scattered field $u^{s}_D(x)=u^{s}(x; d_0, D)$ can be extended to the domain $\R^2\backslash\overline{\Omega}$ as a solution to the Navier equation. 
Below we shall discuss the absence of the analytic extension of $u^{s}_D$, $u^{s}_{D, p}$ and $u^{s}_{D,s}$ around a planer corner point of $D$.

\begin{lemma}\label{ana-ext}
			Assume that $D$ is a rigid elastic scatterer occupying a convex polygon. 
Then the scattered field $u^{s}_D(x; d_0)$, $u^{s}_{D, p}(x; d_0)$ and $u^{s}_{D,s}(x; d_0)$
 cannot be analytically extended from $\mathbb{R}^{2} \backslash \overline{D}$ into $D$ across any corner of $D$.
		\end{lemma}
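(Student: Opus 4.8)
The plan is to argue by contradiction, reducing all three claims to the elementary fact that the incident plane wave never vanishes at the corner. Suppose first that $u^s_D(\cdot;d_0)$ admitted an analytic continuation across one corner of $D$. Translate that corner to the origin $O$ and let the two adjacent edges $\Gamma_1,\Gamma_2$ enclose the interior opening angle $\theta\in(0,\pi)$; here convexity of $D$ is used. Then on a small ball $B_\rho=B_\rho(O)$ the total field $u=u^i+u^s_D$ would be real-analytic, would solve $\Delta^{\ast}u+\omega^2u=0$ in $B_\rho$, and, since $D$ is rigid, would vanish on $\Gamma_1\cap B_\rho$ and $\Gamma_2\cap B_\rho$. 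Because $u^i$ is a plane wave with $|c_p|+|c_s|\neq0$ whose compressional and shear constituents point along the orthogonal directions $d$ and $d^{\bot}$, one has $u^i(O)=c_pd+c_sd^{\bot}\neq0$, and likewise $u^i_p(O)=d\neq0$, $u^i_s(O)=d^{\bot}\neq0$. The entire argument amounts to contradicting exactly one of these nonvanishing statements.

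The main tool is a family of elastic complex-geometric-optics (CGO) solutions paired against $u$ through Betti's second identity on the corner sector $K=B_\rho\cap D$. I would construct solutions $v_\tau$ of $\Delta^{\ast}v_\tau+\omega^2v_\tau=0$ near $K$ carrying a complex exponential phase $\mathrm{e}^{\tau\,\zeta\cdot x}$, with $\zeta\in\C^2$ and $\tau>0$ chosen so that $\real(\zeta\cdot x)<0$ throughout the open convex sector away from $O$; convexity guarantees such a half-plane exists, so $v_\tau$ concentrates at the vertex as $\tau\to\infty$. Inserting $u$ and $v_\tau$ into
\[
\int_{K}\bigl(v_\tau\cdot\Delta^{\ast}u-u\cdot\Delta^{\ast}v_\tau\bigr)\,dx=\int_{\partial K}\bigl(v_\tau\cdot Tu-u\cdot Tv_\tau\bigr)\,ds,
\]
with $T$ the boundary traction operator, the left-hand side vanishes since both fields solve the Navier equation, and on the edges the Dirichlet condition $u=0$ removes the term $u\cdot Tv_\tau$. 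Choosing $v_\tau$ (or a corrected version of it) so that the unknown-traction contribution $v_\tau\cdot Tu$ is controlled on $\Gamma_1,\Gamma_2$, and noting that the arc part of $\partial K$ is exponentially small, the identity collapses to a single relation localized at $O$.

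A Laplace-type evaluation of the surviving volume integral over the sector then drives the contradiction: the opening angle $\theta<\pi$ makes the leading asymptotics nondegenerate, so the dominant term is a nonzero multiple of $u^i(O)$, and letting $\tau\to\infty$ forces $u^i(O)=0$. For the compressional claim I would run the same scheme with purely compressional test fields, using that analytic continuation of $u^s_{D,p}=-\frac{1}{k_p^2}\,\grad\,\divv\,u^s_D$ propagates to $\divv\,u^s_D$ (a Helmholtz solution at $k_p$); the asymptotics then single out $u^i_p(O)=d\neq0$. The shear claim is symmetric, working with $u^s_{D,s}=\frac{1}{k_s^2}\,\curl\,\overrightarrow{\text{curl}}\,u^s_D$ and $\overrightarrow{\text{curl}}\,u^s_D$ to extract $u^i_s(O)=d^{\bot}\neq0$.

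The step I expect to be genuinely delicate is the CGO construction together with the clean handling of the edge traction terms. Unlike the scalar Helmholtz case of \cite{M-H}, the Navier system couples the two wave numbers $k_p$ and $k_s$, and the rigid condition $u=0$ does not split into separate scalar Dirichlet conditions for the compressional and shear potentials. Arranging a test field that dominates or annihilates the unknown traction on both edges while still concentrating at the vertex, and verifying that the two-speed volume asymptotics keeps a nonvanishing leading coefficient, is the heart of the matter; this is also precisely what the separate treatment of $u^s_{D,p}$ and $u^s_{D,s}$ must respect.
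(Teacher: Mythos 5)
There is a genuine gap: the mechanism you propose for the contradiction does not work for a rigid (Dirichlet) obstacle. First, in your Betti identity both $u$ and $v_\tau$ solve the homogeneous Navier equation on the sector $K$ once $u^{s}_D$ has been extended, so the volume integral is identically zero --- there is no ``surviving volume integral'' to evaluate by Laplace asymptotics. The CGO corner argument you are importing is the one for penetrable media or source problems, where a coefficient or source contrast at the vertex produces a nonzero leading volume term; here the only unknown left is the edge traction $Tu$, and what the identity would give you is information about $Tu$ near $O$, not about $u^{i}(O)$. Second, even if you could extract $u^{i}(O)=0$, that would not be a contradiction: the total field vanishes on all of $\partial D$, so $u(O)=0$ and $u^{s}_D(O)=-u^{i}(O)$ hold automatically and are perfectly consistent with $u^{i}(O)\neq 0$. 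Third, no purely local argument at a single corner can succeed for the Dirichlet problem, because there exist nontrivial solutions of the Navier (or Helmholtz) equation vanishing on two line segments meeting at a corner (e.g.\ product-type solutions for a right angle), so local extendibility plus the boundary condition is not by itself contradictory.

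The paper's proof is global, and this is essential. For $u^{s}_D$: if the scattered field extended across a corner, the total field would be real analytic on $(\R^2\backslash\overline D)\cup B_\epsilon(O)$ and would vanish on the two edges near $O$; by analyticity and convexity this vanishing propagates along the full half-lines bounding an infinite sector $\Sigma\subset\R^2\backslash\overline D$, and on those half-lines the Kupradze radiation condition forces $u^{s}_D\to 0$ at infinity, hence $u^{i}\to 0$ there --- impossible for a plane wave, whose modulus along a ray stays equal to $(|c_p|^2+|c_s|^2)^{1/2}>0$. For the P- and S-parts the paper does not argue directly with test functions at all: extendibility of $u^{s}_{D,p}$ makes the Cauchy data of the Hodge potential $\varphi$ piecewise analytic on the edges, the rigid condition transfers this to the Cauchy data of $\psi$ via $(\partial_\tau\psi,\partial_n\psi)=(-\partial_n\varphi,\partial_\tau\varphi)$, and Cauchy--Kovalevskaya then extends $\psi$, hence the whole field, reducing to the first case. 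If you want to salvage a corner-type argument you would need the global radiation-condition input somewhere; as written, your plan stalls at the step ``the dominant term is a nonzero multiple of $u^{i}(O)$.''
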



		\begin{proof} We shall carry out the proof by contradiction.
		 
	(i)	Assume on the contrary that $u^{s}_D(x;d_0)$ can be analytically continued across a corner of $\partial D$. By coordinate translation and rotation, we may suppose that this corner coincides with the origin, so that $u^{s}_D(x;d_0)$ and also the total field $u_D=u^{s}_D(x;d_0)+u^{i}(x; d_0, c_p, c_s)$ satisfy the Navier equation \eqref{lame-eqn} in $B_\epsilon(O)$ for some $\epsilon>0$. Since $u_D$ is real analytic in $(\R^2\backslash\overline{D})\cup B_\epsilon(O)$ and $D$ is a convex polygon, $u_D$ satisfies the Navier equation on the closure of an infinite sector $\Sigma\subset \R^2\backslash\overline{D}$ which extends the finite sector $B_\epsilon(O)\cap D$ to $\R^2\backslash\overline{D}$. In particular, the total field $u_D$ fulfills the Dirichlet boundary condition on the two half lines $\partial \Sigma$ starting from the corner point $O$. 
Since $u^{s}_D(x;d_0)$ fulfills the Kupradze's  radiation condition, it holds that
\[
\lim_{|x|\rightarrow\infty} u^{i}(x; d_0, c_p, c_s)=-
\lim_{|x|\rightarrow\infty} u^{s}_D(x; d_0, c_p, c_s)=0,\quad x\in \partial \Sigma.
\]
However, this is impossible for an elastic plane wave incidence of the form \eqref{2.1}.		

(ii) Suppose that $u^{s}_{D,p}$ can be analytically extended from $\R \backslash \bar{D}$ into $D$ across a corner $O$ of $D$. 
That is,  $u^{s}_{D,p}$ extends to a function which satisfies the vector Helmholtz equation $\Delta u^{s}_{D,p} + k_p^2 u^{s}_{D,p} = 0$ in $B_\epsilon(O)$ for some $\epsilon>0$.
By the Hodge decomposition of the total field $u_D = u_{D,p} + u_{D,s} = \triangledown \varphi + \curl \psi$,  the function $\varphi$ can be also extended to $B_\epsilon(O)$ as a solution to the scalar Helmholtz equation. Here we have used the fact that the incident wave $u^{i}$ is an entire solution to the Navier equation. 
In particular, this implies that the normal and tangential derivatives $(\partial_n \varphi, \partial_\tau \varphi)$ of $\varphi$ are both piecewise analytic on $\partial D\cap B_\epsilon(O)$. Recalling the Dirichlet boundary condition of $u_D$, we have
\begin{equation}
			\left\{
				\begin{split}
					u_D\cdot n &= \frac{\partial \varphi}{\partial n} + \frac{\partial \psi}{\partial \tau} = 0 \\
					u_D\cdot \tau &= \frac{\partial \varphi}{\partial \tau} - \frac{\partial \psi}{\partial n} = 0
				\end{split}
			\right. \quad \text{on} \ \partial D\cap B_\epsilon(O).
		\end{equation}
Hence, $(\partial_\tau\psi, \partial_n \psi)=(-\partial_n\varphi, \partial_\tau\varphi)$ on $\partial D\cap B_\epsilon(O)$ and thus the Cauchy data of $\psi$ are also piecewise analytic. By the Cauchy-Kovalevskaya theorem,  the function $\psi$ admits an extension from $B_\epsilon(O)\cap (\R^2\backslash\overline{D})$ to $B_\epsilon(O)\cap D$, as a solution to the scalar Helmholtz equation with the wave number $k_s$.
Hence, the total field can be continued to $B_\epsilon(O)\cap D$, which however is impossible by the first part of the proof. 

(iii) The case of $u^{s}_{D,s}$ can be proved similarly to the second step for 
$u^{s}_{D,p}$.
		\end{proof}

As in the acoustic case \cite{M-H},  our approach applies to inverse source problems as well. For this purpose, we need to justify the absence of analytical extension for elastic source scattering problems in a corner domain, which is closet to studies of non-radiating elastic sources given in \cite{Bla2018}.
		\begin{lemma}
Let $\chi_D$ be the characteristic function for the convex polygon $D$. If $u\in \left(H^2_{loc}(\mathbb{R}^2)\right)^2$ is a radiating solution to
			\begin{equation}
			\Delta^{\ast} u(x)+ \omega^2u(x)=\chi_D(x)f(x)\quad\mbox{in}\quad \mathbb{R}^2,
			\end{equation}
			where $f\in L^\infty(\R^2)$ is H\"older continuous near the corner point $O$ of $D$ satisfying $f(O)\neq 0$. Then $u$ cannot be analytically extended from $\mathbb{R}^2\backslash\overline{D}$ to $D$ across the  corner $O$. 
		\end{lemma}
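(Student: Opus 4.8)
\section*{Proof proposal}

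The plan is to argue by contradiction and reduce, exactly as in the boundary-value case treated in Lemma~\ref{ana-ext}, to a quantitative ``corner always scatters'' estimate for an elastic source supported in a sector. Suppose $u$ admits an analytic extension across $O$. After translating and rotating I may take $O$ to be the origin and assume that in a ball $B_\eps(O)$ the polygon $D$ coincides with an infinite convex sector $K=\{r(\cos\theta,\sin\theta):r>0,\ \theta_1<\theta<\theta_2\}$ of opening $\theta_2-\theta_1\in(0,\pi)$. In $B_\eps(O)\setminus\overline{D}$ the field $u$ solves the homogeneous Navier equation, and by assumption it continues to a function $v$ with $\Delta^{\ast}v+\om^2 v=0$ in all of $B_\eps(O)$. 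Setting $w:=u-v$ in $\overline{D}\cap B_\eps(O)$ and $w:=0$ in $B_\eps(O)\setminus D$, the $\left(H^2_{loc}(\R^2)\right)^2$-regularity of $u$ (continuity of $u$ and $\nabla u$ across $\partial D$) together with $u=v$ in the exterior forces the Cauchy data of $w$ to vanish on the two edges; hence $w\in\left(H^2(B_\eps(O))\right)^2$ and
\[
\Delta^{\ast}w+\om^2 w=\chi_D f\quad\text{in }B_\eps(O),\qquad w\equiv0\ \text{in }B_\eps(O)\setminus D.
\]

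Next I would test this equation against an exponentially decaying elastic plane wave. Seeking $v_\tau(x)=\rho\,\text{e}^{\rho\cdot x}$ with $\rho\in\C^2$, one checks that $\Delta^{\ast}v_\tau+\om^2 v_\tau=0$ holds precisely when $\rho\cdot\rho=-k_p^2$ (the compressional mode). I choose $\rho=\rho_\tau:=\tau\,\hat d\pm i\sqrt{\tau^2+k_p^2}\,\hat d^{\perp}$ with $\hat d=(\cos\beta,\sin\beta)$ and $\beta$ selected so that $\hat d\cdot\hat\theta=\cos(\theta-\beta)\le -c_0<0$ for all $\theta\in[\theta_1,\theta_2]$, which is possible because the opening is $<\pi$. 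Then $\rho_\tau\cdot\rho_\tau=-k_p^2$, $|\rho_\tau|\sim\sqrt{2}\,\tau$, and $|\text{e}^{\rho_\tau\cdot x}|=\text{e}^{\tau\hat d\cdot x}\le \text{e}^{-c_0\tau|x|}$ on $K$. Applying Betti's second identity to $w$ and $v_\tau$ over $B_\eps(O)$, the volume contributions collapse (the $\om^2$ terms cancel and $\Delta^{\ast}v_\tau+\om^2 v_\tau=0$), while the boundary integral over $\partial B_\eps(O)$ splits into the part lying outside $D$, where $w$ and its traction vanish, and the arc inside $D$, where $v_\tau$ and its traction are exponentially small. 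Hence
\[
I(\tau):=\int_{K\cap B_\eps(O)} f(x)\cdot v_\tau(x)\,dx=O\!\left(\text{e}^{-c\tau\eps}\right)\qquad\text{as }\tau\to\infty,\quad c>0.
\]

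The crux is a lower bound for $I(\tau)$ contradicting this decay. I would replace $K\cap B_\eps(O)$ by the full cone $K$ at the cost of an exponentially small error, split $f(x)=f(O)+(f(x)-f(O))$, and use the explicit planar cone integral $\int_K \text{e}^{\rho\cdot x}\,dx=\int_{\theta_1}^{\theta_2}(\rho\cdot\hat\theta)^{-2}\,d\theta$. Since $\rho_\tau/\tau\to \text{e}^{\mp i\beta}(1,\pm i)$, this gives $\int_K \text{e}^{\rho_\tau\cdot x}\,dx\sim C(K)\,\tau^{-2}$ with $C(K)=\int_{\theta_1}^{\theta_2}\text{e}^{\mp 2i(\theta-\beta)}\,d\theta\neq0$ precisely because the opening angle lies in $(0,\pi)$. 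As $f(O)\cdot\rho_\tau\sim\tau\,\text{e}^{\mp i\beta}\big(f_1(O)\pm i f_2(O)\big)$, the leading term of $I(\tau)$ is $\big(f(O)\cdot(\hat d\pm i\hat d^{\perp})\big)\,C(K)\,\tau^{-1}$, while the H\"older continuity of $f$ makes the remainder $O(\tau^{-1-\gamma})$, of strictly lower order. Since the nonzero vector $f(O)$ cannot be proportional to both $(1,i)$ and $(1,-i)$, at least one choice of sign yields a nonvanishing coefficient, so $|I(\tau)|\ge c'\tau^{-1}$ for large $\tau$; comparing with the exponential decay above forces $f(O)=0$, contradicting the hypothesis.

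The main obstacle is the asymptotic analysis of the last paragraph: verifying that the leading cone-integral coefficient $C(K)$ is genuinely nonzero for every convex corner, and that all remainders (the finite-versus-infinite truncation, the H\"older correction, and the boundary arc) are uniformly of strictly lower order than $\tau^{-1}$. The elastic-specific subtlety---that the ``circular polarization'' $f(O)\parallel(1,\pm i)$ annihilates one compressional test field---is resolved by the freedom in the sign of $\hat d^{\perp}$ (equivalently, by also admitting the shear mode $\rho\cdot\rho=-k_s^2$ with polarization orthogonal to $\rho$), and this is exactly where the assumption $f(O)\neq0$ enters decisively.
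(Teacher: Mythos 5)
Your reduction step is exactly the paper's: assume the extension $v$ exists, form the difference $w=u-v$ on $D\cap B_\eps(O)$, and use the $\left(H^2_{loc}\right)^2$ regularity of $u$ to conclude that $w$ has vanishing Dirichlet and traction data on the two edges while solving $\Delta^{\ast}w+\om^2 w=f$ in the corner. Where you diverge is the final step: the paper at this point simply invokes \cite[Proposition 3.2]{Bla2018} to conclude $f(O)=0$, whereas you prove that implication from scratch with compressional complex-exponential solutions $v_\tau=\rho_\tau\mathrm{e}^{\rho_\tau\cdot x}$, $\rho_\tau\cdot\rho_\tau=-k_p^2$. Your version of that argument is sound in all the places where it could go wrong: the identity $\int_K \mathrm{e}^{\rho\cdot x}\,dx=\int_{\theta_1}^{\theta_2}(\rho\cdot\hat\theta)^{-2}\,d\theta$ is correct, the limit $(\hat d\pm i\hat d^\perp)\cdot\hat\theta=\mathrm{e}^{\pm i(\theta-\beta)}$ never vanishes so the asymptotics are uniform, the resulting coefficient $C_\pm(K)$ is nonzero precisely because a convex interior angle lies in $(0,\pi)$, the H\"older remainder is genuinely $O(\tau^{-1-\gamma})$ against a leading term of order $\tau^{-1}$, and you correctly spot and dispose of the elastic-specific degeneracy that $f(O)$ might be bilinearly orthogonal to one of the isotropic vectors $\hat d\pm i\hat d^\perp$ (it cannot be orthogonal to both, since they span $\C^2$). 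The trade-off is transparent: your route is self-contained and makes visible exactly where convexity ($<\pi$ opening) and $f(O)\neq 0$ enter, at the cost of reproducing an argument that is essentially the proof of the cited proposition; the paper's route is two lines but opaque about why the corner obstructs extension. One minor economy you could adopt from the paper: you do not need the zero extension of $w$ to all of $B_\eps(O)$ to be $H^2$ — applying Betti's identity directly on $D\cap B_\eps(O)$ with the vanishing Cauchy data on the edges suffices and avoids any discussion of regularity at the corner point itself.
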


		\begin{figure}[ht]
			\centering
			\includegraphics[scale=0.3]{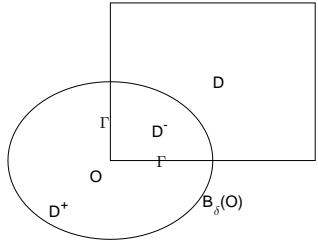}  
			\caption{Illustration of a convex polygonal source term where $O$ is corner point of $D$.}\label{pic2}
		\end{figure}

		\begin{proof}Without loss of generality, the corner point $O$ is supposed to coincide with the origin. Set $v^\pm=u|_{D^\pm}$ where $D^+:=B_\delta(O)\cap (\R^2\backslash\overline{D})$ and $D^-:=B_\delta(O)\cap D$. 
		Assume that $v^+$ can be analytically extended from $\R^2\backslash\overline{D}$ to $B_\delta(O)$ for some $\delta > 0$ (see Figure \ref{pic2}), as a solution of the Navier equation. This implies that
$\Delta^{\ast} v^+ + \omega^2v^+ = 0$ in $B_\delta(O)$,
with the Cauchy data 
\[
v^-=v^+, \quad \mathcal{T}_{\nu} v^-=\mathcal{T}_{\nu} v^+\quad \mbox{on}\quad\Gamma:=\partial D\cap B_\delta(O).
\] Here, the boundary traction operator $\mathcal{T}_{\nu}$ is defined as 
\[
\mathcal{T}_{\nu}u = 2\mu \frac{\partial u}{\partial \nu} + \lambda \nu \triangledown \cdot u + \mu \nu^{\bot}(\partial_2 u_1 - \partial_1 u_2)
\] in the two-dimensional case. 
Since $\Delta^{\ast} v^- + \omega^2v^- = f$ in $B_\delta(O)\cap D$, the difference $w:=u^--u^+$ is a solution to 
			\begin{equation}
					\Delta^{\ast} w + \omega^2w = f\quad\mbox{in}\quad B_\delta(O)\cap D,\quad
					w=\mathcal{T}_{\nu}w=0\quad\mbox{on}\quad \Gamma.
			\end{equation}
By \cite[Proposition 3.2]{Bla2018}, it follows that $f(O)=0$, which is in contradiction with our assumption  that $f(O)\neq 0$.
		\end{proof}


To state the one-wave factorization method, we shall restrict our discussions to a convex polygonal rigid elastic scatterer $D$.  
Let $\Omega$ be another convex rigid scatterer for detecting $D$ such that $\omega^2$ is not the Dirichlet eigenvalue of $-\Delta^{\ast}$ in $\Omega$. Denote by $(\lambda^{(n)}_{\Omega}, \varphi^{(n)}_{\Omega})$ the eigenvalues and eigenfunctions of the far-field operator $F_{\Omega}$. Below we characterize the inclusion relationship  between our target scatterer $D$ and the test domain $\Omega$ by the measurement data $u_D^\infty$ and the spectra of $F_{\Omega}$.
		\begin{theorem}\label{One-FM}
			Define	
			\begin{equation}\label{svd-omega}
				W(\Omega) := \sum_{n \in \mathbb{Z}} \frac{\left|\left\langle u_D^{\infty}, \varphi^{(n)}_{\Omega}\right\rangle_{\mathbb{S}}\right|^{2}}{\left|\lambda^{(n)}_{\Omega}\right|}.
			\end{equation}
			Then $W(\Omega)<\infty$	if and only if $D \subseteq \Omega$.
		\end{theorem}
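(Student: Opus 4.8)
The plan is to reduce Theorem \ref{One-FM} to the domain-characterization furnished by Corollary \ref{fm_convex_domain}, using $v^\infty = u_D^\infty$, and then to supply the missing geometric link---that finiteness of the indicator equals the inclusion $D\subseteq\Omega$---via the non-extensibility result of Lemma \ref{ana-ext}. Observe first that $W(\Omega)$ is precisely the indicator $I(\Omega)$ of Corollary \ref{fm_convex_domain} with the choice $v^\infty = u_D^\infty$. Hence, by that corollary, $W(\Omega)<\infty$ holds if and only if $u_D^\infty$ is the far-field pattern of some Kupradze radiating solution $v^s$ to the Navier equation in $\R^2\backslash\overline{\Omega}$ with $v^s|_{\partial\Omega}\in(H^{1/2}(\partial\Omega))^2$. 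By Rellich's lemma and unique continuation for the Navier system, such a $v^s$, when it exists, must coincide with $u_D^s$ on the unbounded component of $(\R^2\backslash\overline{D})\cap(\R^2\backslash\overline{\Omega})$. Thus the whole theorem becomes the statement: \emph{the scattered field $u_D^s$ extends as a Kupradze radiating solution to all of $\R^2\backslash\overline{\Omega}$ if and only if $D\subseteq\Omega$.}

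For the direction $D\subseteq\Omega\Rightarrow W(\Omega)<\infty$, I would argue that $u_D^s$ is by definition a Kupradze radiating solution of the Navier equation in $\R^2\backslash\overline{D}\supseteq\R^2\backslash\overline{\Omega}$, so its restriction to $\R^2\backslash\overline{\Omega}$ is admissible in Corollary \ref{fm_convex_domain}; one only needs the trace regularity $u_D^s|_{\partial\Omega}\in(H^{1/2}(\partial\Omega))^2$, which holds because $u_D^s$ is real-analytic away from $\overline{D}$ and $\partial\Omega$ is a compact Lipschitz curve lying in that region. Setting $v^s = u_D^s|_{\R^2\backslash\overline{\Omega}}$ then gives $u_D^\infty = v^\infty$, whence $W(\Omega)=I(\Omega)<\infty$.

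The converse $W(\Omega)<\infty\Rightarrow D\subseteq\Omega$ is where the corner-scattering input enters and is the main obstacle. I would prove the contrapositive: if $D\not\subseteq\Omega$, then $W(\Omega)=\infty$. Since both $D$ and $\Omega$ are convex with $D\not\subseteq\Omega$, one can select a corner $O$ of $D$ lying outside $\overline{\Omega}$; by convexity of $\Omega$ a small ball $B_\epsilon(O)$ can be chosen disjoint from $\overline{\Omega}$, so that a radiating extension $v^s$ guaranteed by a finite $W(\Omega)$ would agree with $u_D^s$ near $O$ and hence would analytically continue $u_D^s$ across the corner $O$ into $D$. This directly contradicts Lemma \ref{ana-ext}. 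The delicate points I expect to handle carefully are, first, that the extension $v^s$ and $u_D^s$ genuinely coincide on a full neighborhood of $O$ (requiring that the relevant connected component of the common exterior reaches $O$, which follows from convexity of $\Omega$ and the choice of $O$), and second, that the selected corner can always be placed strictly outside $\overline{\Omega}$ when $D\not\subseteq\Omega$---a purely geometric claim that uses convexity of $D$ to guarantee a vertex witnessing the failure of inclusion. Combining both directions yields the stated equivalence.
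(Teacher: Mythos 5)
Your proposal is correct and follows essentially the same route as the paper: apply Corollary \ref{fm_convex_domain} with $v^\infty=u_D^\infty$ for the direction $D\subseteq\Omega\Rightarrow W(\Omega)<\infty$, and for the converse use convexity to locate a corner of $D$ outside $\overline{\Omega}$ and invoke Lemma \ref{ana-ext} to contradict the analytic continuation of $u_D^s$ across that corner. Your treatment is in fact slightly more explicit than the paper's on the identification of the extension $v^s$ with $u_D^s$ via Rellich's lemma and unique continuation, a step the paper leaves implicit.
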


		\begin{proof}
		By Corollary \ref{fm_convex_domain}, $W(\Omega) < +\infty$ implies that $u^{s}_D(x;d_0)$ is analytic in $\mathbb{R}^2 \backslash \overline{\Omega}$.
		If  $D \nsubseteq \Omega$, three cases might happen: (i) $\Omega\subset D$; (ii) $\Omega\cap D=\emptyset$; (iii) $\Omega\cap D\neq \emptyset$ and $\Omega\cap (\R^2\backslash\overline{D})\neq \emptyset$.
		In either of these cases, we observe that there is always a corner $O \in D$ and $O \notin \Omega$ by the convexity of both $\Omega$ and $D$. Then, $u^{s}_D(x;d_0)$ can be analytically continued from $\R^2\backslash\overline{D}$ to $D$ across the corner $O$ of $\partial D$, which however is impossible by  Lemma \ref{ana-ext}. This proves the relationship $D \subseteq \Omega$.

	Now assume that $D \subseteq \Omega$. Then the scattered field $u^{s}_D(x;d_0)$ satisfies the Navier equation $ \Delta^{\ast} u^{s}_D(x;d_0) + \omega^{2} u^{s}_D(x;d_0) = 0$ in  $\mathbb{R}^2 \backslash \overline{\Omega}$ with the boundary data  $f:=u^{s}_D(x;d_0)|_{\partial \Omega} \in \left(H^{1/2}(\partial \Omega)\right)^2$. 
	This implies that $u_D^\infty=G_\Omega(f)$. Hence,
we get $W(\Omega) < +\infty$ by applying Corollary \ref{fm_convex_domain}.
		\end{proof}

		For the operator $F^{(\alpha)}_{\Omega}$($\alpha=p, s$), denote by $(\lambda^{(n)}_{\Omega,\alpha}, \varphi^{(n)}_{\Omega,\alpha})$ the eigenvalues and eigenfunctions of the positive operator $F^{(\alpha)}_{\Omega,\#}$. 
Using Corollary \ref{fm_convex_domainpp} and arguing analogously to the proof of Theorem \ref{One-FM}, we obtain
	\begin{theorem}\label{One-FMpp}
			Define	
			\begin{equation}
				W^{(\alpha)}(\Omega) := \sum_{n \in \mathbb{Z}} \frac{\left|\left\langle u_{D,\alpha \alpha}^{\infty}, \varphi^{(n)}_{\Omega,\alpha}\right\rangle_{\mathbb{S}}\right|^{2}}{\left|\lambda^{(n)}_{\Omega,\alpha}\right|},
			\end{equation}
			where $u_{D,\alpha \alpha}^{\infty}$ is defined the same as \eqref{u-inftypp} for the scatterer $D$, $\alpha = p, s$. Then $W^{(\alpha)}(\Omega)<\infty$	if and only if $D \subseteq \Omega$.
		\end{theorem}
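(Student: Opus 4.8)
The plan is to prove Theorem~\ref{One-FMpp} by mirroring the argument of Theorem~\ref{One-FM} but replacing Corollary~\ref{fm_convex_domain} with Corollary~\ref{fm_convex_domainpp} and the full scattered field $u^s_D$ with its compressional/shear part $u^s_{D,\alpha}$. Fix $\alpha\in\{p,s\}$ throughout. The core observation is that Corollary~\ref{fm_convex_domainpp} already converts the finiteness of the indicator $W^{(\alpha)}(\Omega)$ into an analytic-extensibility statement: $W^{(\alpha)}(\Omega)<\infty$ iff $u^\infty_{D,\alpha\alpha}$ lies in $\mathrm{Range}(P_\alpha G_\Omega)$, i.e.\ iff $u^\infty_{D,\alpha\alpha}=P_\alpha v^\infty$ for some Kupradze radiating $v^s$ on $\R^2\backslash\overline{\Omega}$ with $v^s|_{\partial\Omega}\in(H^{1/2}(\partial\Omega))^2$. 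By uniqueness of the far-field pattern and of radiating solutions, such a $v^s$ must coincide with $u^s_D$ outside $\Omega$, so this is exactly the assertion that $u^s_{D,\alpha}$ (equivalently the scalar potential via \eqref{wsp}) extends as a radiating solution of the corresponding Helmholtz/Navier problem to all of $\R^2\backslash\overline{\Omega}$.

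Granting that reformulation, I would argue both directions exactly as in Theorem~\ref{One-FM}. For the ``only if'' direction, suppose $W^{(\alpha)}(\Omega)<\infty$ but $D\nsubseteq\Omega$. Since $\Omega$ and $D$ are both convex, in each of the three geometric cases ($\Omega\subset D$; $\Omega\cap D=\emptyset$; overlapping) there is a corner $O$ of $D$ with $O\notin\Omega$. Finiteness of the indicator forces $u^s_{D,\alpha}$ to extend analytically across $O$ from $\R^2\backslash\overline{D}$ into $D$, which is precisely what Lemma~\ref{ana-ext} forbids for $u^s_{D,p}$ and $u^s_{D,s}$. This contradiction yields $D\subseteq\Omega$. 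For the ``if'' direction, if $D\subseteq\Omega$ then $u^s_D$ solves the Navier equation in $\R^2\backslash\overline{\Omega}$ with boundary data $u^s_D|_{\partial\Omega}\in(H^{1/2}(\partial\Omega))^2$, so $v^s:=u^s_D$ is admissible in Corollary~\ref{fm_convex_domainpp}; taking $w^\infty_{\alpha\alpha}=u^\infty_{D,\alpha\alpha}=P_\alpha u^\infty_D$ gives $W^{(\alpha)}(\Omega)<\infty$ directly.

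The one subtlety I would be careful about—and the step I expect to be the main obstacle—is making sure Lemma~\ref{ana-ext} is invoked at the right level. Corollary~\ref{fm_convex_domainpp} gives extensibility of the \emph{potential} $w^s_{\alpha\alpha}$ (the scalar field $-k_p^{-2}\divv v^s$ or $k_s^{-2}\curl v^s$), not immediately of the vector part $u^s_{D,\alpha}$; I must confirm that extensibility of this scalar potential as a radiating Helmholtz solution is equivalent to extensibility of $u^s_{D,\alpha}$ itself, so that the corner obstruction in Lemma~\ref{ana-ext} genuinely applies. This is immediate from the Hodge relations $u_{D,p}=\grad\varphi$, $u_{D,s}=\curl\psi$ together with $w^s_{pp}=-k_p^{-2}\divv u^s_D$ and $w^s_{ss}=k_s^{-2}\curl u^s_D$: a scalar potential and its gradient/curl extend across a corner simultaneously, since $u^s_{D,p}=\grad w^s_{pp}$ up to the entire incident contribution and similarly for the shear part. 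Once this equivalence is recorded, the geometric corner-selection argument and both implications are verbatim those of Theorem~\ref{One-FM}, and no further estimates are needed.
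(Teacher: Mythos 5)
Your proposal is correct and follows essentially the same route as the paper, which itself only remarks that Theorem \ref{One-FMpp} is obtained ``by arguing analogously to the proof of Theorem \ref{One-FM}'' with Corollary \ref{fm_convex_domainpp} in place of Corollary \ref{fm_convex_domain} and parts (ii)--(iii) of Lemma \ref{ana-ext} supplying the corner obstruction; your explicit discussion of passing between the scalar potential $w^s_{\alpha\alpha}$ and the vector part $u^s_{D,\alpha}$ fills in a step the paper leaves implicit. One small caveat: the auxiliary $v^s$ produced by Corollary \ref{fm_convex_domainpp} need not coincide with $u^s_D$ outside $\Omega$ (only the $\alpha$-potential $w^s_{\alpha\alpha}$ is pinned down by Rellich's lemma, since the complementary part of $v^\infty$ is unconstrained), but your final paragraph already argues at the level of that scalar potential, so the proof stands.
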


	\section{Explicit examples when $\Omega$ is a disk}\label{sec:5}
	
	Theorems \ref{One-FM} and \ref{One-FMpp} rely essentially on the factorization form (see e.g., \eqref{fd-svd}) of the far-field operator for the elastic scatterer $\Omega$.  Below we show that the results of Theorems \ref{One-FM} and \ref{One-FMpp} can be justified independently of the factorization form, as long as the test domain $\Omega$ is chosen to be a rigid elastic disk. This is mainly due to the explicit form of  the far-field pattern for a rigid disk in terms of special functions; see Subsection \ref{sec5.1} below. Then we can get an explicit spectral system of the far-field operator $F_{B_R}$ in Subsection \ref{sec5.2}. The proofs 
 Corollaries \ref{fm_convex_domain} and  \ref{fm_convex_domainpp}  will be shown in Subsection \ref{sec5.3}.	
Note that this section is of independent interests, since as shown in the subsequent sections, the derivation of eigenvalues and eigenfunctions of $F_{B_R}$ turns out to be non-trivial, which is in contrast to the acoustic case of the Helmholtz equation. 
	
	\subsection{Far-field pattern of a rigid disk $B_R$}\label{sec5.1}
Assume that $B_R \coloneqq \left\{x : |x|<R \right\}$ is a rigid disk centered at the origin with radius $R>0$. Let $\hat{x}=\left(\cos \theta_x, \sin \theta_x \right)^T\in \s$ be the observation direction (or variable) of the elastic far-field pattern.
According to the Hodge decomposition \eqref{Hodgedecom}, we can introduce scalar functions $\varphi\left(r,\theta_x\right)$ and $\psi\left(r,\theta_x\right)$ such that in polar coordinates $x=(r, \theta_x)$,
	\begin{equation}\label{5.1}
		u^{s}(r,\theta_x) = \text{grad}\  \varphi (r, \theta_x) + \text{curl}\  \psi (r, \theta_x). 
	\end{equation}
Recall the relationship between the Cartesian and polar coordinates for gradient: 
	\begin{equation}
		\begin{pmatrix}
			\partial_1 \\ \partial_2
		\end{pmatrix}
		= 
		\begin{pmatrix}
			\cos \theta & -\frac{1}{r}\sin \theta \\
			\sin \theta & \frac{1}{r} \cos \theta
		\end{pmatrix}
		\begin{pmatrix}
			\partial_r \\ \partial_{\theta}
		\end{pmatrix}.
	\end{equation}
Noting that $\varphi$ and $\psi$ are both Sommerfeld radiating solutions, we make the following ansatz: 
 \[
 \varphi = \sum_{n \in \mathbb{Z}} \frac{1}{\sqrt{k_p}} A_n H^{(1)}_n(k_p r)\text{e}^{in\theta_x},\quad
 \psi = \sum_{n \in \mathbb{Z}} \frac{1}{\sqrt{k_s}} B_n H^{(1)}_n(k_s r)\text{e}^{in\theta_x}.
 \] We can get  from (\ref{5.1}) that
	\begin{equation}
		\begin{split}
			u^{s}(r,\theta_x) &= 
			\begin{pmatrix}
				\cos \theta_x \partial_r \varphi - \frac{1}{r} \sin \theta_x \partial_{\theta_x} \varphi \\
				\sin \theta_x \partial_r \varphi + \frac{1}{r} \cos \theta_x \partial_{\theta_x} \varphi
			\end{pmatrix}
			+
			\begin{pmatrix}
				-\sin \theta_x \partial_r \psi - \frac{1}{r} \cos \theta_x \partial_{\theta_x} \psi \\
				\cos \theta_x \partial_r \psi - \frac{1}{r} \sin \theta_x \partial_{\theta_x} \psi
			\end{pmatrix} \\
			&= \sum_{n \in \mathbb{Z}} \frac{\text{e}^{in\theta_x}}{r} \left[
				\left(
					\sqrt{k_p} r H^{(1)\prime}_n(k_p r) A_n - \frac{in}{\sqrt{k_s}} H^{(1)}_n(k_s r)B_n
				\right) 
			\begin{pmatrix}
				\cos \theta_x \\ \sin \theta_x
			\end{pmatrix}
			\right. \\
			 & \quad \left.   + \left(
				\frac{in}{\sqrt{k_p}} H^{(1)}_n(k_p r)A_n + \sqrt{k_s} r H^{(1)\prime}_n(k_s r)B_n
				\right)
			\begin{pmatrix}
				-\sin \theta_x \\ \cos \theta_x
			\end{pmatrix}
			\right].
		\end{split}
	\end{equation}
	Using the asymptotic property of Hankel functions 
	\begin{equation}
		\begin{split}
			H_n^{(1)}(z) &= \sqrt{\frac{2}{\pi z}} \text{e}^{i(z-\frac{n\pi}{2}-\frac{\pi}{4})} \left( 1+\mathcal{O}(\frac{1}{z})\right), \ z \to \infty, \\
			H_n^{(1)\prime}(z) &= \sqrt{\frac{2}{\pi z}} \text{e}^{i(z-\frac{n\pi}{2}+\frac{\pi}{4})} \left( 1+\mathcal{O}(\frac{1}{z})\right), \ z \to \infty,
		\end{split}
	\end{equation}
	we have 
	\begin{equation}
		u^{s}(x) = \frac{\text{e}^{ik_pr}}{\sqrt{r}} \sqrt{\frac{2}{\pi}} \sum_{n \in \mathbb{Z}} \text{e}^{-i(\frac{n\pi}{2}-\frac{\pi}{4})} A_n \text{e}^{in\theta_x}\hat{x} + \frac{\text{e}^{ik_sr}}{\sqrt{r}} \sqrt{\frac{2}{\pi}} \sum_{n \in \mathbb{Z}} \text{e}^{-i(\frac{n\pi}{2}-\frac{\pi}{4})} B_n \text{e}^{in\theta_x}\hat{x}^{\bot} + \mathcal{O}(\frac{1}{r^{3/2}}).
	\end{equation}
	Thus, 
	\begin{equation}
		\begin{split}
			u^{\infty}_p(\hat{x}) = \sqrt{\frac{2}{\pi}} \text{e}^{i\frac{\pi}{4}} \sum_{n \in \mathbb{Z}} i^{-n} A_n \text{e}^{in\theta_x}, \quad
			u^{\infty}_s(\hat{x}) = \sqrt{\frac{2}{\pi}} \text{e}^{i\frac{\pi}{4}} \sum_{n \in \mathbb{Z}} i^{-n} B_n \text{e}^{in\theta_x}.
		\end{split}
	\end{equation}
	Now, we set 
	\begin{equation}
		t_{\alpha} = k_{\alpha} R, 
		\ \alpha = p, s,
	\end{equation}
	to get 
	\begin{equation}\label{uscR}
		\begin{split}
			u^{s}(r,\theta_x)|_{r=R} &= \sum_{n \in \mathbb{Z}} 
			\left[ 
				\left(
					\frac{t_p}{\sqrt{k_p}} H^{(1)\prime}_n(t_p) A_n - \frac{in}{\sqrt{k_s}} H^{(1)}_n(t_s)B_n
				\right)
				\begin{pmatrix}
					\cos \theta_x \\ \sin \theta_x
				\end{pmatrix} 
			\right. \\
			&\quad +\left. \left(
				\frac{in}{\sqrt{k_p}} H^{(1)}_n(t_p) A_n + \frac{t_s}{\sqrt{k_s}} H^{(1)\prime}_n(t_s)B_n
				\right)
				\begin{pmatrix}
					-\sin \theta_x \\ \cos \theta_x
				\end{pmatrix}
			\right]
			\frac{\text{e}^{in \theta_x}}{R} \\
			&=\sum_{n \in \mathbb{Z}} 
			\begin{pmatrix}
				\nu & \tau
			\end{pmatrix}
			\begin{pmatrix}
				t_p H^{(1)\prime}_n(t_p) & -in H^{(1)}_n(t_s)\\
				in H^{(1)}_n(t_p) & t_s H^{(1)\prime}_n(t_s)
			\end{pmatrix}
			\begin{pmatrix}
				\frac{1}{\sqrt{k_p}} & 0 \\
				0 & \frac{1}{\sqrt{k_s}}
			\end{pmatrix}
			\begin{pmatrix}
				A_n \\ B_n
			\end{pmatrix}
			\frac{\text{e}^{in \theta_x}}{R},
		\end{split}
	\end{equation}
	where $\nu = (\cos \theta_x, \sin \theta_x)^T$ and $\tau = (-\sin \theta_x, \cos \theta_x)^T$ are tangential and normal directions of $\partial B_R$ respectively, and that $(\nu\quad  \tau)$ is thus a 2-by-2 matrix.
Set
	\begin{equation}\label{Hn}
		\mathcal{H}_n \coloneqq 	\begin{pmatrix}
			t_p H^{(1)\prime}_n(t_p) & -in H^{(1)}_n(t_s)\\
			in H^{(1)}_n(t_p) & t_s H^{(1)\prime}_n(t_s)
		\end{pmatrix} \text{ and }
		Q \coloneqq \begin{pmatrix}
			\frac{1}{\sqrt{k_p}} & 0 \\
			0 & \frac{1}{\sqrt{k_s}}	
		\end{pmatrix}.
	\end{equation}
	From \eqref{uscR} and \eqref{Hn}, we have 
	\begin{equation}\label{scfield}
		u^{s}(r,\theta_x)|_{r=R} = \sum_{n \in \mathbb{Z}} 
		\begin{pmatrix}
			\nu & \tau
		\end{pmatrix}
		\mathcal{H}_n Q
		\begin{pmatrix}
			A_n \\ B_n
		\end{pmatrix}
		\frac{\text{e}^{in \theta_x}}{R}.
	\end{equation}

	\begin{remark}\label{hn-determin} By 
	\cite[Lemma 2.11]{Bao2018} and \cite[Remark 3.2]{Li2016}, 
	it follows that $|\mathcal{H}_n| = t_p t_s H_n^{(1)\prime}(t_p)H_n^{(1)\prime}(t_s) - n^2 H_n^{(1)}(t_p)H_n^{(1)}(t_s) \neq 0$ for any $n \in \mathbb{Z}$. Hence
the matrix $\mathcal{H}_n$ is always invertible.
\end{remark}

Having expanded the scattered field into the series (\ref{scfield}), now we need to represent an elastic plane wave in terms of the special functions on $|x|=R$.
Let $d =\left(\cos \theta_d, \sin \theta_d\right)^T\in \s$ be the incident direction. By the Jacobi-Anger expansion (see e.g., \cite[Formula (3.89)]{kress.1998}), we see
	\begin{equation}\label{JAe}
		\text{e}^{ikx \cdot d} = \sum_{n \in \mathbb{Z}} i^n J_n\left(k|x|\right) \text{e}^{in \theta}, \quad x \in \mathbb{R}^2,
	\end{equation}
	where $\theta = \theta_x - \theta_d$ denotes the angle between $\hat{x}$ and $d$.
	Let $d^\bot = (-\sin \theta_d, \cos \theta_d)^T$ be a vector perpendicular to $d$. Recalling the compressional part $u^{i}_p$ of the incident wave $u^{i}$, 
	\begin{equation}\label{uinp}
		u^{i}_p (x) = d \  \text{e}^{i k_p x \cdot d},
	\end{equation}
and inserting \eqref{JAe} to \eqref{uinp}, we get the form of $u^{i}_p$ over $\partial B_R$ as
	\begin{equation}\label{5.14}
		u^{i}_p|_{r=R} = \sum_{n \in \mathbb{Z}} d J_n(t_p) i^n \text{e}^{in \theta}.
	\end{equation}
Using the formulas 
	\begin{equation}
		\cos \theta = \frac{1}{2}\left( \text{e}^{i \theta} + \text{e}^{-i \theta}\right), \ \sin \theta = \frac{1}{2i} \left( \text{e}^{i \theta} - \text{e}^{-i \theta}\right)
	\end{equation}
	and the following properties of Bessel functions
	\begin{equation}
		2 J^{\prime}_n(x) = J_{n-1}(x) - J_{n+1}(x), \ J_{n-1}(x) + J_{n+1}(x) = \frac{2n}{x} J_n(x),
	\end{equation}
	we can get from (\ref{5.14}) that 
	\begin{equation}\label{5.16}
		\begin{split}
			u^{i}_p \cdot \nu |_{r=R} &= \frac{1}{2} \sum_{n \in \mathbb{Z}} \left(\text{e}^{i \theta} + \text{e}^{-i \theta}\right) J_n(t_p ) i^n \text{e}^{in \theta} \\
			&= \frac{1}{2} \sum_{n \in \mathbb{Z}} \left( J_n(t_p) i^n \text{e}^{i(n+1) \theta} + J_n(t_p) i^n \text{e}^{i(n-1) \theta} \right) \\
			&= \frac{1}{2} \sum_{n \in \mathbb{Z}} \left(J_n(t_p) i^{n-1} \text{e}^{in \theta} + J_n(t_p) i^{n+1} \text{e}^{in \theta} \right) \\
			&= \frac{1}{2} \sum_{n \in \mathbb{Z}} \left(J_{n-1}(t_p) - J_{n+1}(t_p) \right) i^{n-1} \text{e}^{in \theta} \\
			&= \sum_{n \in \mathbb{Z}} J^{\prime}_n(t_p) i^{n-1} \text{e}^{in \theta}
		\end{split}
	\end{equation}
	and 
	\begin{equation}\label{5.17}
		\begin{split}
			u^{i}_p \cdot \tau |_{r=R} &= \frac{1}{2i} \sum_{n \in \mathbb{Z}} \left(\text{e}^{-i \theta} - \text{e}^{i \theta}\right) J_n(t_p) i^n \text{e}^{in \theta} \\
			&= \frac{1}{2} \sum_{n \in \mathbb{Z}} \left( J_n(t_p) i^{n-1} \text{e}^{i(n-1) \theta} - J_n(t_p) i^{n-1} \text{e}^{i(n+1) \theta} \right) \\
			&= \frac{1}{2} \sum_{n \in \mathbb{Z}} \left(J_{n+1}(t_p) i^{n} \text{e}^{in \theta} + J_{n-1}(t_p) i^{n} \text{e}^{in \theta} \right) \\
			&= \sum_{n \in \mathbb{Z}} \frac{n}{t_p} J_n(t_p) i^{n} \text{e}^{in \theta}.
		\end{split}
	\end{equation}
	
We use the notation $v^{s}$ to denote the scattered field produced by the compressional part $u^{i}_p$ of the incident wave $u^{i}$. Note that $v^{s}$  is usually different from the compressional part of the scattered field for $u^{i}$.
By \eqref{scfield}, we can represent $v^{s}(r,\theta_x)$ on $|x|=R$	as the series
 \begin{equation}
		v^{s}(r,\theta_x)|_{r=R} =\sum_{n \in \mathbb{Z}} 
		\begin{pmatrix}
			\nu & \tau
		\end{pmatrix}
		\mathcal{H}_n Q
		\begin{pmatrix}
			A_{n,v} \\ B_{n,v}
		\end{pmatrix}
		\frac{\text{e}^{in \theta_x}}{R},
	\end{equation}
	where the coefficients $A_{n, v}, B_{n, v}\in\C$  are associated with $v^{s}$. Making use of the Dirichlet boundary condition $u^{i}_p = -v^{s}$ on $\partial B_R$, we have 
	\begin{equation*}
		\left.
		\begin{pmatrix}
			u^{i}_p \cdot \nu \\ u^{i}_p \cdot \tau
		\end{pmatrix} \right\rvert_{r=R} 
		= - \left.
		\begin{pmatrix}
			v^{s} \cdot \nu \\ v^{s} \cdot \tau
		\end{pmatrix} \right\rvert_{r=R},
	\end{equation*}
	which together with (\ref{5.16}) and (\ref{5.17}) leads to 
	\begin{equation}
		\sum_{n \in \mathbb{Z}}
		\begin{pmatrix}
			J^{\prime}_n(t_p) \\ \frac{in}{t_p} J_n(t_p)
		\end{pmatrix}
		i^{n-1} \text{e}^{in \theta_x} \text{e}^{-in \theta_d} = -\sum_{n \in \mathbb{Z}}
		\mathcal{H}_n Q
		\begin{pmatrix}
			A_{n,v} \\ B_{n,v}
		\end{pmatrix}
		\frac{\text{e}^{in \theta_x}}{R}. 
	\end{equation}
	By the arbitrariness of $\theta_x$, we can get
	\begin{equation*}
		\begin{pmatrix}
			i J^{\prime}_n(t_p) \\ -\frac{n}{t_p} J_n(t_p)
		\end{pmatrix}
		i^{n} \text{e}^{-in \theta_d} 
		= 
		\frac{\mathcal{H}_n Q}{R}
		\begin{pmatrix}
			A_{n,v} \\ B_{n,v}
		\end{pmatrix},
	\end{equation*}
	implying the relation 
	\begin{equation}
		\begin{pmatrix}
			A_{n,v} \\ B_{n,v}
		\end{pmatrix}
		= Q^{-1} \mathcal{H}_n^{-1}
		\begin{pmatrix}
			i J^{\prime}_n(t_p) \\ -\frac{n}{t_p} J_n(t_p)
		\end{pmatrix}
		R i^{n} \text{e}^{-in \theta_d}  \,.
	\end{equation}
	Thus, we obtain the P-part and S-part of the far-field patterns of $v^{s}$ as follows:
	\begin{equation}\label{ppsp-infty}
		\begin{pmatrix}
			u^{\infty}_{pp}(\hat{x}) \\ 
			u^{\infty}_{sp}(\hat{x})
		\end{pmatrix}
		= \sqrt{\frac{2}{\pi}} \text{e}^{i\frac{\pi}{4}}iR \sum_{n \in \mathbb{Z}} Q^{-1} \mathcal{H}_n^{-1} 
		\begin{pmatrix}
			 J^{\prime}_n(t_p) \\ \frac{in}{t_p} J_n(t_p)
		\end{pmatrix}
		\text{e}^{in \theta}. 
	\end{equation}

The far-field pattern excited by the S-part of an elastic plane wave can be treated similarly.
The shear part $u^{i}_s$ of the incident wave $u^{i}$ takes the form 
	\begin{equation}
		u^{i}_s (x) = d^{\bot } \text{e}^{i k_s x \cdot d},
	\end{equation}
which together with  \eqref{JAe} gives arise to
	\begin{equation}
		u^{i}_s|_{r=R} = \sum_{n \in \mathbb{Z}} d^{\bot } J_n(t_s) i^n \text{e}^{in \theta}.
	\end{equation}
	Correspondingly, we have
	\begin{equation*}
			u^{i}_s \cdot \nu |_{r=R} = -\sum_{n \in \mathbb{Z}} \frac{n}{t_s} J_n(t_s) i^{n} \text{e}^{in \theta}, \quad
			u^{i}_s \cdot \tau |_{r=R}= \sum_{n \in \mathbb{Z}} J^{\prime}_n(t_s) i^{n-1} \text{e}^{in \theta}.
	\end{equation*}
Again using \eqref{scfield}, we can represent by $w^{s}$ the scattered field produced by $u^{i}_s$ in the form of
	\begin{equation}
		w^{s}(r,\theta_x)|_{r=R} =\sum_{n \in \mathbb{Z}} 
		\begin{pmatrix}
			\nu & \tau
		\end{pmatrix}
		\mathcal{H}_n Q
		\begin{pmatrix}
			A_{n,w} \\ B_{n,w}
		\end{pmatrix}
		\frac{\text{e}^{in \theta_x}}{R}.
	\end{equation}
Combining the proves two identities together with the boundary condition $u^{i}_s = -w^{s}$ on $\partial B_R$, we arrive at  
	\begin{equation}
		\begin{pmatrix}
			A_{n,w} \\ B_{n,w}
		\end{pmatrix}
		= Q^{-1} \mathcal{H}_n^{-1}
		\begin{pmatrix}
			\frac{n}{t_s} J_n(t_s) \\ i J^{\prime}_n(t_s)
		\end{pmatrix}
		R i^{n} \text{e}^{-in \theta_d}.  
	\end{equation}
	Thus, we obtain $u^{\infty}_{ps}$ and $u^{\infty}_{ss}$ as follows:
	\begin{equation}\label{psss-infty}
		\begin{pmatrix}
			u^{\infty}_{ps}(\hat{x})\\ 
			u^{\infty}_{ss}(\hat{x})
		\end{pmatrix}
		= \sqrt{\frac{2}{\pi}} \text{e}^{i\frac{\pi}{4}}iR \sum_{n \in \mathbb{Z}} Q^{-1} \mathcal{H}_n^{-1} 
		\begin{pmatrix}
			-\frac{in}{t_s} J_n(t_s) \\  J^{\prime}_n(t_s)
		\end{pmatrix}
		\text{e}^{in \theta}. 
	\end{equation}
 This enables us to define the matrix
	\begin{equation}
		U_{B_R}^{\infty} \coloneqq
		\begin{pmatrix}
			u^{\infty}_{pp}(\hat{x}) & u^{\infty}_{ps}(\hat{x}) \\ 
			u^{\infty}_{sp}(\hat{x}) & u^{\infty}_{ss}(\hat{x})
		\end{pmatrix}
		= \sqrt{\frac{2}{\pi}} \text{e}^{i\frac{\pi}{4}} i \sum_{n \in \mathbb{Z}} Q^{-1} \mathcal{H}_n^{-1}
		\begin{pmatrix}
			R J^{\prime}_n(t_p) & -\frac{in}{k_s} J_n(t_s) \\
		    \frac{in}{k_p} J_n(t_p) & R J^{\prime}_n(t_s)
		\end{pmatrix}
		\text{e}^{in \theta}.
	\end{equation}
Setting
	\begin{equation}
		\mathcal{J}_n \coloneqq \begin{pmatrix}
			t_p J^{\prime}_n(t_p) & -in J_n(t_s)\\
			in J_n(t_p) & t_s J^{\prime}_n(t_s)
		\end{pmatrix},
	\end{equation}
we can rewrite $U_{B_R}^{\infty}$ as
	\begin{equation}\label{matrix-uinf}
		U_{B_R}^{\infty}(\hat{x}) = \sqrt{\frac{2}{\pi}} \text{e}^{i\frac{\pi}{4}} i \sum_{n \in \mathbb{Z}} Q^{-1} \mathcal{H}_n^{-1} \mathcal{J}_n Q^2
		\text{e}^{in \theta}.
	\end{equation}

To sum up,  for the elastic plane wave  $u^{i}$ of the general form (\ref{2.1}), by linear superposition its far-field pattern $u^{\infty}$ takes the form of 
	\eqref{u-infty}, where the P-part and S-part for $u^{i}_p$ and 
	$u^{i}_s$
	 are given in the matrix $U_{B_R}^\infty$ (see also \eqref{ppsp-infty} and \eqref{psss-infty}).

	\subsection{Spectral system of  the far-field operator $F_{B_R}$}\label{sec5.2}
	
	Now, we need to derive eigenvalues and the associated eigenfunctions  of the far-field operator $F_{B_R}$ defined by (e.g. (\ref{op-domain}))
	\begin{equation}\label{op-br}
			(F_{B_R}g)(\hat{x}) = \text{ e}^{-i\frac{\pi}{4}} \int_{\mathtt{S}} \left\{\sqrt{\frac{k_p}{\omega}}u^{\infty}_{B_R}(\hat{x}; d, 1, 0)g_p(d) + \sqrt{\frac{k_s}{\omega}}u^{\infty}_{B_R}(\hat{x}; d, 0, 1)g_s(d)\right\} ds(d).
		\end{equation}
Obviously, the spectral system of $F_{B_R}$ should be connected to the spectral system of the matrix $U_{B_R}$. To disclose this relation, we retain the notations from the previous subsection to define $\widetilde{\Sigma}_n  := Q^{-1} \mathcal{H}_n^{-1} \mathcal{J}_n Q$.

	\begin{lemma}\label{gn}
		If $(\lambda_n, \widetilde{X}_n)$ is the spectral pair of $\widetilde{\Sigma}_n$, that is, $\widetilde{\Sigma}_n\widetilde{X}_n = \lambda_n \widetilde{X}_n,\ \widetilde{X}_n = (\widetilde{X}_n^{(1)}, \widetilde{X}_n^{(2)})^T$. 
		Then 
		\begin{equation}\label{L5.2}
		(F_{B_R}g)(\hat{x}) = \sqrt{\frac{8\pi}{\omega}}i \lambda_n\; g(\hat{x}),\quad g(\hat{x}):=(\widetilde{X}_n^{(1)} \hat{x} + \widetilde{X}_n^{(2)} \hat{x}^{\bot })\text{e}^{in \theta_x}.
		\end{equation}
	\end{lemma}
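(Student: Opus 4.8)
The plan is to diagonalise $F_{B_R}$ mode by mode: I would pass to the rotating frame $\begin{pmatrix}\hat{x} & \hat{x}^{\bot}\end{pmatrix}$, insert the Fourier series \eqref{matrix-uinf} for the far-field matrix, and exploit the orthogonality of $\{\mathrm{e}^{in\theta_d}\}$ on the unit circle to collapse the angular integral onto a single Fourier mode. First I would record the compact form of the vector far field: from \eqref{u-infty}, for any polarisation $(c_p,c_s)^T$,
\[
u^{\infty}_{B_R}(\hat{x};d,c_p,c_s)=\begin{pmatrix}\hat{x} & \hat{x}^{\bot}\end{pmatrix}U^{\infty}_{B_R}(\hat{x})\begin{pmatrix}c_p\\ c_s\end{pmatrix},
\]
where $\begin{pmatrix}\hat{x} & \hat{x}^{\bot}\end{pmatrix}$ denotes the $2\times2$ matrix with the indicated columns. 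Taking $(c_p,c_s)^T=\omega^{-1/2}Q^{-1}(g_p(d),g_s(d))^T$, which is exactly the weighting in \eqref{op-br} since $Q^{-1}=\mathrm{diag}(\sqrt{k_p},\sqrt{k_s})$, recasts the far-field operator as
\[
(F_{B_R}g)(\hat{x})=\frac{\mathrm{e}^{-i\pi/4}}{\sqrt{\omega}}\begin{pmatrix}\hat{x} & \hat{x}^{\bot}\end{pmatrix}\int_{\s}U^{\infty}_{B_R}(\hat{x})\,Q^{-1}\begin{pmatrix}g_p(d)\\ g_s(d)\end{pmatrix}ds(d).
\]

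Next I would substitute \eqref{matrix-uinf}. Since $\widetilde{\Sigma}_n=Q^{-1}\mathcal{H}_n^{-1}\mathcal{J}_nQ$ gives $Q^{-1}\mathcal{H}_n^{-1}\mathcal{J}_nQ^2=\widetilde{\Sigma}_nQ$, we obtain
\[
U^{\infty}_{B_R}(\hat{x})\,Q^{-1}=\sqrt{\tfrac{2}{\pi}}\,\mathrm{e}^{i\pi/4}\,i\sum_{m\in\Z}\widetilde{\Sigma}_m\,\mathrm{e}^{im(\theta_x-\theta_d)},
\]
the $\mathrm{e}^{i\pi/4}$ here cancelling the $\mathrm{e}^{-i\pi/4}$ of the previous display. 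The crucial feature is that each $\widetilde{\Sigma}_m$ is a constant $2\times2$ matrix, independent of $\theta_x$ and $\theta_d$, so that all angular dependence is carried by $\mathrm{e}^{im(\theta_x-\theta_d)}$.

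Finally I would feed in the candidate eigenfunction. In the frame $\{d,d^{\bot}\}$ the density $g$ has coordinate vector $(g_p(d),g_s(d))^T=\widetilde{X}_n\,\mathrm{e}^{in\theta_d}$, so the $d$-integral reduces to
\[
\sum_{m\in\Z}\widetilde{\Sigma}_m\widetilde{X}_n\,\mathrm{e}^{im\theta_x}\int_0^{2\pi}\mathrm{e}^{i(n-m)\theta_d}\,d\theta_d=2\pi\,\widetilde{\Sigma}_n\widetilde{X}_n\,\mathrm{e}^{in\theta_x},
\]
only $m=n$ surviving by orthogonality. Applying the spectral relation $\widetilde{\Sigma}_n\widetilde{X}_n=\lambda_n\widetilde{X}_n$ together with $\begin{pmatrix}\hat{x} & \hat{x}^{\bot}\end{pmatrix}\widetilde{X}_n\,\mathrm{e}^{in\theta_x}=g(\hat{x})$, the scalar prefactors combine as $\omega^{-1/2}\sqrt{2/\pi}\,i\cdot2\pi=i\sqrt{8\pi/\omega}$, which is precisely \eqref{L5.2}.

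There is no deep obstruction here; the argument is driven entirely by Fourier orthogonality. The only point demanding care is the bookkeeping across the two rotating frames: one must check that the diagonal factors $Q$, the observation frame $\{\hat{x},\hat{x}^{\bot}\}$ and the incident frame $\{d,d^{\bot}\}$ assemble so that the reduced matrix $\widetilde{\Sigma}_n$ carries no residual angular dependence, since this is exactly what allows a single Fourier mode to survive the integration. Convergence of the series and the legitimacy of interchanging summation and integration follow from the invertibility of $\mathcal{H}_n$ (Remark \ref{hn-determin}) and the decay of the Bessel and Hankel factors.
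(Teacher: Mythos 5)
Your proposal is correct and follows essentially the same route as the paper's proof: insert the density with $g_p(d)=\widetilde{X}_n^{(1)}\mathrm{e}^{in\theta_d}$, $g_s(d)=\widetilde{X}_n^{(2)}\mathrm{e}^{in\theta_d}$ into the definition of $F_{B_R}$, rewrite the integrand as $\begin{pmatrix}\hat{x} & \hat{x}^{\bot}\end{pmatrix}U^{\infty}_{B_R}Q^{-1}\widetilde{X}_n\mathrm{e}^{in\theta_d}$, expand $U^{\infty}_{B_R}$ via \eqref{matrix-uinf}, and use Fourier orthogonality to isolate the $n$-th mode before applying $\widetilde{\Sigma}_n\widetilde{X}_n=\lambda_n\widetilde{X}_n$. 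The constant bookkeeping $\omega^{-1/2}\sqrt{2/\pi}\,i\cdot 2\pi=i\sqrt{8\pi/\omega}$ matches the paper exactly.
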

	\begin{proof}  Let $g\in \left(L^2(\s)\right)^2$ be given as in \eqref{L5.2}.
It is easy to see the P- and S-component of $g$ as $g_p(d)=\widetilde{X}_n^{(1)} \text{e}^{in \theta_d}$, $g_s(d)=\widetilde{X}_n^{(2)} \text{e}^{in \theta_d}$. It then follows from the definition of $F_{B_R}$ in \eqref{op-br} that
	\begin{equation*}
		\begin{split}
			\left(F_{B_R}g\right)(\hat{x}) &= \frac{1}{\sqrt{\omega}} \text{ e}^{-i\frac{\pi}{4}}
			\int_{\mathbb{S}} g_p(d) \left[\sqrt{k_p}u^{\infty}_{pp}(\hat{x}; d) \hat{x} + \sqrt{k_p}u^{\infty}_{sp}(\hat{x}; d) \hat{x}^{\bot}\right] \\ 
			& \quad + g_s(d) \left[\sqrt{k_s}u^{\infty}_{ps}(\hat{x}; d) \hat{x} + \sqrt{k_s}u^{\infty}_{ss}(\hat{x}; d) \hat{x}^{\bot}\right] ds(d) \\
			&= \frac{1}{\sqrt{\omega}} \text{ e}^{-i\frac{\pi}{4}} \int_{\mathbb{S}} \left[\widetilde{X}_n^{(1)} \sqrt{k_p}u^{\infty}_{pp}(\hat{x}; d) \hat{x} + \widetilde{X}_n^{(1)} \sqrt{k_p}u^{\infty}_{sp}(\hat{x}; d) \hat{x}^{\bot}\right]\text{e}^{in \theta_d} \\ 
			& \quad + \left[\widetilde{X}_n^{(2)} \sqrt{k_s}u^{\infty}_{ps}(\hat{x}; d) \hat{x} + \widetilde{X}_n^{(2)} \sqrt{k_s}u^{\infty}_{ss}(\hat{x}; d) \hat{x}^{\bot}\right]\text{e}^{in \theta_d} ds(d) \\
			&= \frac{1}{\sqrt{\omega}} \text{ e}^{-i\frac{\pi}{4}} \int_{\mathbb{S}} \left[ \left(\widetilde{X}_n^{(1)} \sqrt{k_p}u^{\infty}_{pp}(\hat{x}; d) + \widetilde{X}_n^{(2)} \sqrt{k_s}u^{\infty}_{ps}(\hat{x}; d) \right)\hat{x}\right. \\
			& \quad + \left. \left(\widetilde{X}_n^{(1)} \sqrt{k_p}u^{\infty}_{sp}(\hat{x}; d) + \widetilde{X}_n^{(2)} \sqrt{k_s}u^{\infty}_{ss}(\hat{x}; d)\right)
			\hat{x}^{\bot}\right]\text{e}^{in \theta_d} ds(d) \\
			&= \frac{1}{\sqrt{\omega}} \text{ e}^{-i\frac{\pi}{4}} \int_{\mathbb{S}} \begin{pmatrix}
				\hat{x} & \hat{x}^{\bot}
			\end{pmatrix}
			U_{B_R}^{\infty} Q^{-1} \widetilde{X}_n \text{e}^{in \theta_d} ds(d)\\
			&= \sqrt{\frac{2}{\omega \pi}}i \int_{\mathbb{S}} \sum_{m \in \mathbb{Z}} \begin{pmatrix}
				\hat{x} & \hat{x}^{\bot}
			\end{pmatrix} \widetilde{\Sigma}_m \widetilde{X}_n \text{e}^{im \theta_x} \text{e}^{in \theta_d} ds(d).
		\end{split}
	\end{equation*}
Using the orthogonality of $\text{e}^{in \theta_d}$ for $n\in \Z$ and the fact that  $\widetilde{\Sigma}_n$ and $\widetilde{X}_n$ are independent of $d$, we arrive at
	\begin{equation*}
		\begin{split}
			\left(F_{B_R}g\right)(\hat{x}) &= \sqrt{\frac{2}{\omega \pi}}i \int_{\mathbb{S}} \begin{pmatrix}
				\hat{x} & \hat{x}^{\bot}
			\end{pmatrix} \widetilde{\Sigma}_n \widetilde{X}_n \text{e}^{in \theta_x} ds(d)\\
			&= \sqrt{\frac{8\pi}{\omega}}i  \begin{pmatrix}
				\hat{x} & \hat{x}^{\bot}
			\end{pmatrix} \widetilde{\Sigma}_n \widetilde{X}_n \text{e}^{in \theta_x} \\
			&= \sqrt{\frac{8\pi}{\omega}}i  \begin{pmatrix}
				\hat{x} & \hat{x}^{\bot}
			\end{pmatrix} \lambda_n \widetilde{X}_n \text{e}^{in \theta_x}\\
			&= \sqrt{\frac{8\pi}{\omega}}i \lambda_n g(\hat{x}).
		\end{split}
	\end{equation*}
    \end{proof}
As a consequence of Lemma \ref{gn}, we obtain the spectral pair of $F_{B_R}$ as follows. 
	\begin{lemma}\label{eig-FBR}
		The spectral pair of $F_{B_R}$ is given by
	\[
	(\lambda^{(n)}_{B_R}, X^{(n)}_{B_R}) = \left(\sqrt{\frac{8\pi}{\omega}}i \lambda_n,\; \begin{pmatrix}
			\hat{x} & \hat{x}^{\bot }
		\end{pmatrix}
		 Q^{-1}X_n \text{e}^{in \theta_x} \right),
		 \]
		  where $(\lambda_n, X_n)$ is the spectral pair of $\Sigma_n \coloneqq \mathcal{H}_n^{-1} \mathcal{J}_n$.
	\end{lemma}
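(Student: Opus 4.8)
The plan is to deduce Lemma \ref{eig-FBR} directly from Lemma \ref{gn} by exploiting the fact that the matrix $\widetilde\Sigma_n = Q^{-1}\mathcal{H}_n^{-1}\mathcal{J}_n Q$ appearing there and the matrix $\Sigma_n = \mathcal{H}_n^{-1}\mathcal{J}_n$ appearing in the statement are similar. First I would observe that, by the very definitions in the previous subsection,
\[
\widetilde\Sigma_n = Q^{-1}\mathcal{H}_n^{-1}\mathcal{J}_n Q = Q^{-1}\Sigma_n Q,
\]
so $\widetilde\Sigma_n$ and $\Sigma_n$ are conjugate by the invertible diagonal matrix $Q$. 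Consequently they share the same characteristic polynomial, hence the same eigenvalues $\lambda_n$, while their eigenvectors are related by the change of basis $Q$: if $\Sigma_n X_n = \lambda_n X_n$, then inserting $QQ^{-1}$ gives $\widetilde\Sigma_n(Q^{-1}X_n) = Q^{-1}\Sigma_n X_n = \lambda_n(Q^{-1}X_n)$, so that $\widetilde X_n := Q^{-1}X_n$ is the eigenvector of $\widetilde\Sigma_n$ associated with $\lambda_n$.

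Next I would substitute this identification into Lemma \ref{gn}. That lemma asserts that for $\widetilde X_n = (\widetilde X_n^{(1)}, \widetilde X_n^{(2)})^T$ an eigenvector of $\widetilde\Sigma_n$ with eigenvalue $\lambda_n$, the field $g(\hat x) = (\widetilde X_n^{(1)}\hat x + \widetilde X_n^{(2)}\hat x^{\bot})\text{e}^{in\theta_x}$ satisfies $F_{B_R}g = \sqrt{8\pi/\omega}\,i\lambda_n\, g$. Writing $g(\hat x) = \begin{pmatrix}\hat x & \hat x^{\bot}\end{pmatrix}\widetilde X_n\,\text{e}^{in\theta_x}$ and replacing $\widetilde X_n$ by $Q^{-1}X_n$ yields exactly the eigenfunction $X^{(n)}_{B_R} = \begin{pmatrix}\hat x & \hat x^{\bot}\end{pmatrix}Q^{-1}X_n\,\text{e}^{in\theta_x}$ with eigenvalue $\lambda^{(n)}_{B_R} = \sqrt{8\pi/\omega}\,i\lambda_n$, which is the claimed spectral pair.

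Finally, to confirm that these pairs exhaust the spectrum of $F_{B_R}$ rather than forming a mere subfamily, I would invoke completeness together with the normality of $F_{B_R}$ (recorded earlier for rigid bodies). As $n$ ranges over $\Z$ and $X_n$ over the eigenvectors of each $2\times 2$ matrix $\Sigma_n$ (which is invertible by Remark \ref{hn-determin}), the constructed fields span the same mode-$n$ subspaces as $\{\hat x\,\text{e}^{in\theta_x}, \hat x^{\bot}\,\text{e}^{in\theta_x}\}$; since every $g\in(L^2(\s))^2$ decomposes as $g = g_p\hat x + g_s\hat x^{\bot}$ with $g_p, g_s\in L^2(\s)$ expandable in the Fourier basis $\{\text{e}^{in\theta_x}\}$, and since normality guarantees that the restriction of $F_{B_R}$ to each such orthogonal mode-$n$ subspace is diagonalizable, these eigenpairs form a complete system in $(L^2(\s))^2$. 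I do not expect a genuine obstacle here: the one point requiring mild care is keeping the conjugation direction consistent, so that the eigenvector transforms as $Q^{-1}X_n$ and not $QX_n$; everything else is an immediate linear-algebraic consequence once Lemma \ref{gn} is in hand.
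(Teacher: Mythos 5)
Your argument is correct and follows essentially the same route as the paper: both exploit the similarity $\widetilde\Sigma_n = Q^{-1}\Sigma_n Q$ to transfer eigenpairs between $\Sigma_n$ and $\widetilde\Sigma_n$ via $\widetilde X_n = Q^{-1}X_n$ and then invoke Lemma \ref{gn}. Your closing paragraph on completeness of the eigenpairs is a sensible addition that the paper's proof omits, but it does not change the substance of the argument.
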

	\begin{proof} Suppose that $(\lambda_n, \widetilde{X}_n)$ is the spectral pair of $\widetilde{\Sigma}_n$.	Writing $X_n := Q \widetilde{X}_n$, we have
		\begin{equation}
			\lambda_n \widetilde{X}_n =\widetilde{\Sigma}_n \widetilde{X}_n = Q^{-1}\Sigma_n Q \widetilde{X}_n = Q^{-1}\Sigma_n X_n.
		\end{equation}
This implies that $\lambda_n  X_n = \Sigma_n  X_n$.
Using Lemma \ref{gn}, we get
\[
\lambda^{(n)}_{B_R}= \sqrt{\frac{8\pi}{\omega}}i \lambda_n,
\quad
X^{(n)}_{B_R}(\hat{x})=\left(\hat{x} \;\;  \hat{x}^{\bot }\right) Q^{-1}X_n \text{e}^{in \theta_x}.
\]
	\end{proof}
Since the eigenvalues of $F_{B_R}$ have appeared in the denominator of the indicator \eqref{svd-omega} with $\Omega=B_R$, 
it is necessary to show  $\lambda^{(n)}_{B_R}\neq 0$ for all $n\in \N$ under an additional assumption of the frequency. 
	\begin{lemma}
		If $\omega^2$ is not a Dirichlet eigenvalue of $-\Delta^{\ast}$ in $B_R$, then the eigenvalue $\lambda_n$ of the matrix $\Sigma_n$ cannot vanish for any $n \in \mathbb{Z}$.
	\end{lemma}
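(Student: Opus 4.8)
The plan is to reduce the non-vanishing of $\lambda_n$ to the invertibility of the matrix $\mathcal{J}_n$, and then to identify the condition $\det\mathcal{J}_n=0$ with the presence of an interior Dirichlet eigenfunction of $-\Delta^{\ast}$ in $B_R$. First I would note that, since $\Sigma_n=\mathcal{H}_n^{-1}\mathcal{J}_n$ is a $2\times2$ matrix, $0$ is one of its eigenvalues if and only if $\det\Sigma_n=0$. Because $\det\Sigma_n=\det\mathcal{J}_n/\det\mathcal{H}_n$ and Remark~\ref{hn-determin} guarantees $\det\mathcal{H}_n\neq0$ for every $n\in\mathbb{Z}$, the vanishing of an eigenvalue $\lambda_n$ is equivalent to $\det\mathcal{J}_n=0$, where $\det\mathcal{J}_n=t_p t_s J_n'(t_p)J_n'(t_s)-n^2 J_n(t_p)J_n(t_s)$.

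The heart of the argument is to show that $\det\mathcal{J}_n=0$ forces $\omega^2$ to be a Dirichlet eigenvalue of $-\Delta^{\ast}$ over $B_R$. To this end I would mimic the exterior computation of Subsection~\ref{sec5.1}, but with the radiating Hankel functions replaced by the Bessel functions $J_n$, which are the ones regular at the origin. Concretely, set $\varphi=\sum_{n} k_p^{-1/2}A_n J_n(k_p r)\mathrm{e}^{in\theta_x}$ and $\psi=\sum_{n} k_s^{-1/2}B_n J_n(k_s r)\mathrm{e}^{in\theta_x}$ and let $u=\grad\varphi+\curl\psi$. Repeating verbatim the algebra that produced \eqref{scfield}, the only change being $H_n^{(1)}\mapsto J_n$, shows that on $\partial B_R$ the trace of $u$ equals $\sum_{n}(\nu\ \ \tau)\,\mathcal{J}_n Q\,(A_n,B_n)^T\,\mathrm{e}^{in\theta_x}/R$, with exactly the matrix $\mathcal{J}_n$ of the statement. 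Since $(\nu\ \ \tau)$ is orthogonal and $Q$ is invertible, the Dirichlet condition $u|_{\partial B_R}=0$ holds for a nontrivial coefficient pair $(A_n,B_n)\neq0$ if and only if $\mathcal{J}_n$ is singular, i.e.\ precisely when $\det\mathcal{J}_n=0$.

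Finally I would verify that the field $u$ produced by such a nontrivial $(A_n,B_n)$ is not identically zero, so that it is a genuine eigenfunction. If $u=\grad\varphi+\curl\psi\equiv0$ in $B_R$, then taking the divergence gives $\Delta\varphi\equiv0$, while $\varphi$ solves $(\Delta+k_p^2)\varphi=0$ with $k_p>0$, forcing $\varphi\equiv0$; applying $\overrightarrow{\text{curl}}$ gives $\psi\equiv0$ in the same way, whence $A_n=B_n=0$, a contradiction. Thus $\det\mathcal{J}_n=0$ does imply that $\omega^2$ is a Dirichlet eigenvalue of $-\Delta^{\ast}$ in $B_R$. Taking the contrapositive yields the lemma: if $\omega^2$ is not such an eigenvalue, then $\det\mathcal{J}_n\neq0$, hence $\det\Sigma_n=\det\mathcal{J}_n/\det\mathcal{H}_n\neq0$, and therefore $\lambda_n\neq0$ for every $n\in\mathbb{Z}$. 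I expect the main technical care to lie in the reduction just described, namely confirming that the Bessel-function ansatz reproduces exactly the matrix $\mathcal{J}_n$ and that the resulting interior field is nontrivial, rather than in any delicate estimate.
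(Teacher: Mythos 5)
Your proof is correct and follows essentially the same route as the paper's: reduce the vanishing of $\lambda_n$ to the singularity of $\mathcal{J}_n$ via the invertibility of $\mathcal{H}_n$ (Remark \ref{hn-determin}), then construct an interior Dirichlet eigenfunction of $-\Delta^{\ast}$ on $B_R$ from the Bessel-function ansatz $u=\grad\varphi+\curl\psi$. The only differences are cosmetic (determinants and a contrapositive rather than eigenvectors and a contradiction), plus your explicit verification that the constructed field $u$ is nontrivial, a point the paper leaves implicit.
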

	\begin{proof}
		Suppose that there exists  $n\in \mathbb{Z}$ such that $\lambda_n = 0$ is the eigenvalue of the matrix $\Sigma_n$ and that $X_n \neq 0$ is the corresponding eigenvector. Then, 
		\begin{equation*}
			\mathcal{H}_n^{-1} \mathcal{J}_n X_n=\Sigma_n X_n = \lambda_n X_n = 0,
		\end{equation*}
		Since $\mathcal{H}_n^{-1}$ is invertible
		(see Remark \ref{hn-determin}), 
		we have 
			$\mathcal{J}_n X_n = 0$, implying that
$|\mathcal{J}_n|=0$.  Since
$X_n = (X_n^{(1)},X_n^{(2)})^T \neq 0$, we may define the non-trivial  function $u = \grad \varphi + \curl \psi$ where
\[
\varphi = \sum_{n \in \mathbb{Z}}  X_n^{(1)} J_n(k_p r)\text{e}^{in\theta_x}, \quad \psi = \sum_{n \in \mathbb{Z}} X_n^{(2)} J_n(k_s r)\text{e}^{in\theta_x}.
\]  Then it is easy to check that
		\begin{equation*}
			\begin{split}
				& \quad u|_{\partial B_R} \\
				&= \sum_{n \in \mathbb{Z}} \frac{\text{e}^{in\theta_x}}{R} \left[
					\left(
						k_p R J^{\prime}_n(k_p R) X_n^{(1)} - in J_n(k_s R)X_n^{(2)}
					\right) \hat{x} + 
					\left(
						in J_n(k_p R)X_n^{(1)} + k_s R J^{\prime}_n(k_s R)X_n^{(2)}
					\right) \hat{x}^{\bot}\right]\\
				&= \sum_{n \in \mathbb{Z}} \frac{\text{e}^{in\theta_x}}{R} 
				\begin{pmatrix}
					\hat{x} & \hat{x}^{\bot}
				\end{pmatrix}
				\begin{pmatrix}
					t_p J^{\prime}_n(t_p) & - in J_n(t_s ) \\
					in J_n(t_p) & t_s J^{\prime}_n(t_s)
				\end{pmatrix}
				\begin{pmatrix}
					X_n^{(1)} \\ X_n^{(2)}
				\end{pmatrix}\\
				&= \sum_{n \in \mathbb{Z}} \frac{\text{e}^{in\theta_x}}{R} 
				\begin{pmatrix}
					\hat{x} & \hat{x}^{\bot}
				\end{pmatrix}
				\mathcal{J}_n X_n\\
				&= 0.
			\end{split}
		\end{equation*}
	On the other hand, it is obvious that  $u$ satisfies the Navier equation $-\Delta^{\ast} u = \omega^2 u $ in  $B_R$. Hence, it is a Dirichlet eigenfunction of $-\Delta^*$ over $B_R$, which is impossible.
	\end{proof}

To calculate the spectra of $F_{B_R}$, by Lemma \ref{eig-FBR} we need to consider the generalized eigenvalue problem 
	\begin{equation}\label{gev}
		\mathcal{J}_n X_n = \lambda_n \mathcal{H}_n X_n,
	\end{equation}
	where $\lambda_n$ and $X_n$ represent eigenvalues and eigenvectors of $\Sigma_n$.
Recalling the Hankel functions and its derivatives,
	\begin{equation*}
		H^{(1)}_n(z) = J_n(z) + i Y_n(z), \  H^{(1)\prime}_n(z) = J^{\prime}_n(z) + i Y^{\prime}_n(z),
	\end{equation*}
and  setting
	\begin{equation*}
		\mathcal{Y}_n \coloneqq \begin{pmatrix}
			t_p Y^{\prime}_n(t_p) & -in Y_n(t_s)\\
			in Y_n(t_p) & t_s Y^{\prime}_n(t_s)
		\end{pmatrix},
	\end{equation*}
we can rephrase the matrix $\mathcal{H}_n$ as
	\begin{equation}\label{jyh}
		\mathcal{H}_n = \mathcal{J}_n + i \mathcal{Y}_n.
	\end{equation}
Below we describe an eigensystem of the generalized eigenvalue problem (\ref{gev}) with the help of the decomposition \eqref{jyh}.

	\begin{lemma}\label{eig-hn}
A normalized eigensystem $(\lambda_{n,j}, X_{n,j})$ with $n\in \N, j=1,2$ to the generalized eigenvalue problem (\ref{gev}) is given by 
\begin{equation}\label{5.34}
\lambda_{n,j} = \frac{t_p J_n^{\prime}(t_p) + in J_n(t_s)\sigma_j^{(n)}}{t_p H_n^{(1)\prime}(t_p) + in H_n^{(1)}(t_s)\sigma_j^{(n)}}, \quad
X_{n,j} = \frac{(1,\sigma_j^{(n)})^T}{\sqrt{1+|\sigma_j^{(n)}|^2}},
\end{equation}	
with
\begin{equation*}
\begin{split}
&\sigma_1^{(n)} = \frac{-\beta_n + \sqrt{\beta_n^2 -4}}{2}, \quad
		\sigma_2^{(n)} = \frac{-\beta_n - \sqrt{\beta_n^2 -4}}{2},\\
&\beta_n = \frac{\pi}{2in}\left[n^2(J_n(t_s)Y_n(t_p)-J_n(t_p)Y_n(t_s))+t_pt_s(J_n^{\prime}(t_p)Y_n^{\prime}(t_s)-J_n^{\prime}(t_s)Y_n^{\prime}(t_p))\right].
\end{split}
\end{equation*}
		
	\end{lemma}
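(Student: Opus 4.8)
The plan is to solve the generalized eigenvalue problem $\mathcal{J}_n X_n = \lambda_n \mathcal{H}_n X_n$ by exploiting the decomposition $\mathcal{H}_n = \mathcal{J}_n + i\mathcal{Y}_n$ established in \eqref{jyh}. Substituting this into the problem rewrites it as $\mathcal{J}_n X_n = \lambda_n(\mathcal{J}_n + i\mathcal{Y}_n)X_n$, i.e. $(1-\lambda_n)\mathcal{J}_n X_n = i\lambda_n \mathcal{Y}_n X_n$. Since $\lambda_n = 1$ would force $\mathcal{Y}_n X_n = 0$ (which one can rule out from the Wronskian-type nonvanishing), we may assume $\lambda_n \neq 1$ and reduce to the ordinary eigenvalue problem $\mathcal{Y}_n^{-1}\mathcal{J}_n X_n = \frac{1-\lambda_n}{i\lambda_n} X_n$. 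Thus the eigenvectors $X_n$ are shared between the generalized problem and the matrix $\mathcal{Y}_n^{-1}\mathcal{J}_n$, and the eigenvalues are related by a M\"obius transformation. The first concrete step is therefore to compute $\mathcal{M}_n := \mathcal{Y}_n^{-1}\mathcal{J}_n$ explicitly, determine its eigenvectors, and then invert the M\"obius relation to recover $\lambda_{n,j}$.

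\emph{First} I would look for eigenvectors of the normalized form $X_{n,j} = (1, \sigma_j^{(n)})^T/\sqrt{1+|\sigma_j^{(n)}|^2}$ claimed in \eqref{5.34}, so the substance is to find the scalars $\sigma_j^{(n)}$. Writing out $\mathcal{J}_n X = \lambda \mathcal{H}_n X$ componentwise with $X = (1,\sigma)^T$ gives two scalar equations, from which $\lambda$ can be eliminated to produce a single quadratic in $\sigma$. Carrying out this elimination is the computational heart of the proof: the first row yields $\lambda = \bigl(t_p J_n'(t_p) - in J_n(t_s)\sigma\bigr)/\bigl(t_p H_n^{(1)\prime}(t_p) - in H_n^{(1)}(t_s)\sigma\bigr)$ and the second row yields a second expression for $\lambda$ in terms of $\sigma$; setting them equal and clearing denominators produces $\sigma^2 + \beta_n \sigma + 1 = 0$, whose roots are exactly $\sigma_{1,2}^{(n)}$. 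The key algebraic simplification here is that the off-diagonal and cross terms collapse via the Wronskian identity $J_n(z)Y_n'(z) - J_n'(z)Y_n(z) = 2/(\pi z)$, which is precisely what produces the factor $\pi/(2in)$ and the combination defining $\beta_n$.

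\emph{Then}, having the two roots $\sigma_1^{(n)}, \sigma_2^{(n)}$ with product $1$ and sum $-\beta_n$ (a Vieta reading of the quadratic), I would substitute each back into the first-row expression for $\lambda$ to obtain the stated formula for $\lambda_{n,j}$. A sign convention must be tracked: the formula in \eqref{5.34} carries $+in$ rather than $-in$, so I would confirm at the outset whether the eigenvector is parametrized as $(1,\sigma)^T$ or the quadratic is taken in the reciprocal variable; with the product of roots equal to $1$ the two sign conventions are interchangeable by swapping $\sigma_1 \leftrightarrow \sigma_2$, which reconciles the displayed signs. Normalizing each eigenvector by $\sqrt{1+|\sigma_j^{(n)}|^2}$ then gives the stated $X_{n,j}$, completing the derivation.

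\emph{The main obstacle} I anticipate is the algebraic elimination that reduces the coupled $2\times 2$ system to the scalar quadratic $\sigma^2 + \beta_n\sigma + 1 = 0$ with the clean coefficient $\beta_n$. The cross terms involve products such as $t_p t_s J_n'(t_p)H_n^{(1)\prime}(t_s)$ and $n^2 J_n(t_p)H_n^{(1)}(t_s)$, and it is not immediately transparent that after subtraction they organize into the symmetric combination $n^2\bigl(J_n(t_s)Y_n(t_p) - J_n(t_p)Y_n(t_s)\bigr) + t_p t_s\bigl(J_n'(t_p)Y_n'(t_s) - J_n'(t_s)Y_n'(t_p)\bigr)$. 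Confirming that the $J$-with-$J$ and $H$-with-$H$ pieces cancel, leaving only mixed $J$-with-$Y$ terms weighted correctly, is the delicate bookkeeping step; once the Wronskian relation is invoked at the right place, the remainder is routine. I would also remark that invertibility of $\mathcal{Y}_n$ (needed to pass to $\mathcal{M}_n$) and the fact that $|\mathcal{H}_n|\neq 0$ from Remark \ref{hn-determin} together guarantee the quadratic is genuinely of degree two and the two eigenvalues are well defined.
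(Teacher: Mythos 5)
Your proposal is correct and follows essentially the same route as the paper: the ansatz $X_n=(1,\sigma)^T$, elimination of the eigenvalue between the two rows, and the Wronskian $J_nY_n'-J_n'Y_n=2/(\pi t)$ collapsing the coefficients into the quadratic $\sigma^2+\beta_n\sigma+1=0$, with the normalization and the stated $\lambda_{n,j}$ read off at the end. The paper merely routes the elimination through the auxiliary problem $\mathcal{J}_nX=\eta\,\mathcal{Y}_nX$ and then converts via $\lambda=\eta/(\eta+i)$ using $\mathcal{H}_n=\mathcal{J}_n+i\mathcal{Y}_n$ (note your M\"obius relation should read $\mathcal{Y}_n^{-1}\mathcal{J}_nX=\tfrac{i\lambda}{1-\lambda}X$, the reciprocal of what you wrote, though your actual computation does not use it), whereas you eliminate directly on the $\mathcal{H}_n$ problem --- the same computation, since the $J$--$H^{(1)}$ Wronskian is $i$ times the $J$--$Y$ one.
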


	\begin{proof}
		Let $X_n = (1,\sigma^{(n)})^T$ be an eigenvector of the generalized  eigenvalue problem $\mathcal{J}_n X_n = \eta_n \mathcal{Y}_n X_n$, where $\eta_n$ is the eigenvalue. 
Using the Wronskian 
		\begin{equation*}
			J_n(t)Y_n^{\prime}(t)-J_n^{\prime}(t)Y_n(t) = \frac{2}{\pi t},
		\end{equation*}
simple calculations show that $\sigma^{(n)}$ should satisfy the algebraic equation
		\begin{equation}\label{x2}
			\sigma^{(n)2}+ \beta_n \sigma^{(n)} +1 =0,
		\end{equation}
		where $\beta_n$ is defined as in the lemma.
The two roots of \eqref{x2} are given by
		\begin{equation*}
			\sigma_1^{(n)} = \frac{-\beta_n + \sqrt{\beta_n^2 -4}}{2}, \quad
			\sigma_2^{(n)} = \frac{-\beta_n - \sqrt{\beta_n^2 -4}}{2}.
		\end{equation*}
On the other hand, one can also calculate the corresponding eigenvalues 
		\begin{equation*}
			\eta_{n,j} = \frac{t_p J_n^{\prime}(t_p) + in J_n(t_s)\sigma_j^{(n)}}{t_p Y_n^{\prime}(t_p) + in Y_n(t_s)\sigma_j^{(n)}},\quad  j=1,2.
		\end{equation*}
Using the decomposition (\ref{jyh}), we get
		\begin{equation*}
			\mathcal{H}_n X_n = (\mathcal{J}_n + i \mathcal{Y}_n) X_n = \frac{\eta_n+i}{\eta_n} \mathcal{J}_n X_n.
		\end{equation*}
Therefore,  $(\frac{\eta_n}{\eta_n+i},X_n)$ is the eigensystem of the generalized eigenvalue problem $\mathcal{J}_n X_n = \lambda_n \mathcal{H}_n X_n$.
		Further, we get the eigenvalues 
		\begin{equation}
			\lambda_{n,j} =\frac{\eta_{n,j}}{\eta_{n,j}+i}= \frac{t_p J_n^{\prime}(t_p) + in J_n(t_s)\sigma_j^{(n)}}{t_p H_n^{(1)\prime}(t_p) + in H_n^{(1)}(t_s)\sigma_j^{(n)}}, \  j=1,2.
		\end{equation}
	\end{proof}
Combining Lemma \ref{eig-FBR} and \ref{eig-hn}, we obtain an eigensystem of the far-field operator $F_{B_R}$ by
	\begin{equation}\label{eigsys-fbr}
		\begin{split}
			\lambda^{(n)}_{B_R,j} &= \sqrt{\frac{8\pi}{\omega}}\frac{it_p J_n^{\prime}(t_p) - n J_n(t_s)\sigma_j^{(n)}}{t_p H_n^{(1)\prime}(t_p) + in H_n^{(1)}(t_s)\sigma_j^{(n)}}, \\
			X^{(n)}_{B_R,j} &= \left(\sqrt{k_p}X_{n,j}^{(1)} \hat{x} + \sqrt{k_s}X_{n,j}^{(2)} \hat{x}^{\bot} \right) \text{e}^{in\theta_x}\\
			&= \left(\sqrt{k_p}\hat{x}+\sqrt{k_s}\sigma_j^{(n)}\hat{x}^{\bot}\right)\frac{\text{e}^{in \theta_x}}{\sqrt{1+|\sigma_j^{(n)}|^2}}
		\end{split}
	\end{equation}
for $n\in \N, j=1,2$. 

\begin{remark}\label{asy-beta}
The asymptotics of $\sigma_j^{(n)}$ can be derived as follows.
Recall the asymptotic behavior of 	
Bessel functions (see \cite{kress.1998}) 
	\begin{equation}
		\begin{split}
			J_{n}(z) &= \frac{z^{n}}{2^{n} n!} \left(1+\mathcal{O}(\frac{1}{n})\right),\quad n \rightarrow +\infty, \\
			J_{n}^{\prime}(z) &= \frac{z^{n-1}}{2^{n} (n-1)!} \left(1+\mathcal{O}(\frac{1}{n})\right),\quad n \rightarrow +\infty,
		\end{split}
	\end{equation}
	and those of Neumann functions:
	\begin{equation}
		\begin{split}
			Y_{n}(z) &= -\frac{2^{n}(n-1)!}{\pi z^{n}}
			\left(1+\mathcal{O}(\frac{1}{n})\right),\quad n \rightarrow +\infty,\\
			Y_{n}^{\prime}(z) &= \frac{2^{n} n!}{\pi z^{n+1}} 
			\left(1+\mathcal{O}(\frac{1}{n})\right),\quad n \rightarrow +\infty.
		\end{split}
	\end{equation}
Then we get from the definition of $\beta_n$ stated in Lemma \ref{eig-hn} that
\begin{equation*}\begin{split}
\beta_n = i\left(\frac{t_s^n}{t_p^n}-\frac{t_p^n}{t_s^n}\right) \left(1+\mathcal{O}\left(\frac{1}{n}\right)\right),\quad
\sqrt{\beta_n^2-4} = i\left(\frac{t_s^n}{t_p^n}+\frac{t_p^n}{t_s^n}\right) \left(1+\mathcal{O}\left(\frac{1}{n}\right)\right),
\end{split}
\end{equation*}
whence it follows as $n\to\infty$ that , 
\[ 
\sigma_1^{(n)}=i\frac{t_p^n}{t_s^n}\left(1+\mathcal{O}\left(\frac{1}{n}\right)\right),\quad \sigma_2^{(n)}=-i\frac{t_s^n}{t_p^n}\left(1+\mathcal{O}\left(\frac{1}{n}\right)\right).
\]
\end{remark}

Now we can get the asymptotic behavior of the eigenvalues of $F_{B_R}$ as $n\rightarrow\infty$.
Using \eqref{eigsys-fbr}, Remark \ref{asy-beta} and the following recurrence relations
	\begin{equation*}
		\begin{split}
			tJ_n^{\prime}(t) = nJ_n(t) - tJ_{n+1}(t), \quad
			tH_n^{(1)\prime}(t) = tH_{n-1}^{(1)}(t) - nH_n^{(1)}(t),
		\end{split}
	\end{equation*} 
we find
	\begin{equation}\label{FBR-VAL}
		\begin{split}
		\lambda^{(n)}_{B_R,1} &= \sqrt{\frac{8\pi}{\omega}}\frac{inJ_n(t_p) - it_pJ_{n+1}(t_p) - n J_n(t_s)\sigma_1^{(n)}}{t_p H_n^{(1)\prime}(t_p) + in H_n^{(1)}(t_s)\sigma_1^{(n)}} \\
		&= -\sqrt{\frac{2\pi}{\omega}} \frac{\pi  t_p^{2n+2} t_s^{2n}}{2^{2n} (n+1)! n! (t_p^{2n} + t_s^{2n})} \left(1+\mathcal{O}(\frac{1}{n})\right),\\ 
		\lambda^{(n)}_{B_R,2} &= \sqrt{\frac{8\pi}{\omega}}\frac{it_p J_n^{\prime}(t_p) - n J_n(t_s)\sigma_2^{(n)}}{t_p H_{n-1}^{(1)}(t_p) - nH_n^{(1)}(t_p) + in H_n^{(1)}(t_s)\sigma_2^{(n)}} \\
		&= -\sqrt{\frac{2\pi}{\omega}} \frac{\pi   (t_p^{2n} + t_s^{2n})}{2^{2n-2} (n-1)! (n-2)! t_p^2} \left(1+\mathcal{O}\left(\frac{1}{n}\right)\right).
		\end{split}
	\end{equation}
Further, from (\ref{5.34}) we get the asymptotics of the eigenvectors of $\Sigma_n$ as follows	
\begin{equation}\label{FBR-VEC}
		\begin{split}
			X_{n,1} &= (X_{n,1}^{(1)},X_{n,1}^{(2)})^T = \left(\frac{t_s^{n}}{\sqrt{t_p^{2n} + t_s^{2n}}}, \frac{i t_p^{n}}{\sqrt{t_p^{2n} + t_s^{2n}}}\right)^T  \; \left(1+\mathcal{O}\left(\frac{1}{n}\right)\right) ,\\
			X_{n,2} &= (X_{n,2}^{(1)},X_{n,2}^{(2)})^T = \left(\frac{ t_p^{n}}{\sqrt{t_p^{2n} + t_s^{2n}}}, \frac{-i t_s^{n}}{\sqrt{t_p^{2n} + t_s^{2n}}}\right)^T\;  \left(1+\mathcal{O}\left(\frac{1}{n}\right)\right)  .
		\end{split}
	\end{equation}

	\subsection{Proof of Corollaries \ref{fm_convex_domain} and  \ref{fm_convex_domainpp}  for testing disks}\label{sec5.3}
In this subsection, we will use the eigensystem $(\lambda_{B_R, j}^{(n)}, X_{B_R, j}^{(n)})$  for $n\in \N, j=1,2$ (see \eqref{eigsys-fbr}) of the far-field operator $F_{B_R}$ to verify Corollaries 
\ref{fm_convex_domain} and  \ref{fm_convex_domainpp} with $\Omega = B_R$.   Corollary \ref{fm_convex_domain} can be rephrased as
	\begin{corollary}\label{fmdiskff}
		Let $v^\infty\in \left(L^2(\s)\right)^2$ and assume that
		$\omega^2$ is not a Dirichlet eigenvalue of $-\Delta^{\ast}$ over $B_R$. Then 
	   \begin{equation}
		   I(B_R) = \sum_{n \in \mathbb{Z}}\sum_{j=1}^2 \frac{\left|\left\langle v^{\infty}, X^{(n)}_{B_R, j}\right\rangle_{\mathbb{S}}\right|^{2}}{\left|\lambda^{(n)}_{B_R, j}\right|} < + \infty
	   \end{equation}
		   if and only if $v^{\infty}$ is the far-field pattern of some Kupradze radiating solution $v^{s}$, where $v^{s}$ satisfies the Navier equation
	   \begin{equation}\label{Navier-disk}
		   \Delta^{\ast} v^{s} + \omega^{2} v^{s} = 0 \qquad \text{in} \quad  \mathbb{R}^2 \backslash \overline{B_R},
	   \end{equation}
		with the boundary data $v^{s} |_{\partial B_R}\in \left(H^{1/2}(\partial B_R)\right)^2.$
	\end{corollary}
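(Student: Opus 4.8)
The plan is to verify the convergence of the explicitly given series $I(B_R)$ directly from the spectral data \eqref{eigsys-fbr}, without invoking the abstract identity $\mathrm{Range}((F_{B_R}^*F_{B_R})^{1/4})=\mathrm{Range}(G_{B_R})$ that underlies Corollary~\ref{fm_convex_domain}; this is precisely what makes the present subsection self-contained. The entire argument is pushed down to Fourier/Hankel coefficients. Any Kupradze radiating solution $v^s$ in $\mathbb{R}^2\setminus\overline{B_R}$ is represented through its Hodge potentials with Hankel coefficients $A_n,B_n$, and by the series representation of Subsection~\ref{sec5.1} its far field has $p$- and $s$-Fourier coefficients $\hat v_{p,n}=\sqrt{2/\pi}\,e^{i\pi/4}i^{-n}A_n$ and $\hat v_{s,n}=\sqrt{2/\pi}\,e^{i\pi/4}i^{-n}B_n$; conversely, every $v^\infty\in(L^2(\mathbb{S}))^2$ fixes $A_n,B_n$ uniquely. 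First I would show that $v^\infty\in\mathrm{Range}(G_{B_R})$, i.e. that $v^\infty$ is the far field of a radiating solution whose Dirichlet trace lies in $(H^{1/2}(\partial B_R))^2$, is equivalent to the weighted summability
\[
\sum_{n\in\mathbb{Z}}(1+|n|)\left|\mathcal{H}_nQ\begin{pmatrix}A_n\\ B_n\end{pmatrix}\right|^2<\infty .
\]
This comes from the boundary representation \eqref{scfield} together with the Fourier description of $(H^{1/2}(\partial B_R))^2$; the rotation between the moving frame $(\nu,\tau)$ and Cartesian components only shifts Fourier indices by $\pm1$ and hence leaves the weight $1+|n|$ equivalent.

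Next I would evaluate the inner products in $I(B_R)$. Since $X_{B_R,j}^{(n)}$ lives on the single angular mode $e^{in\theta_x}$ with polarization $\sqrt{k_p}\,\hat x+\sqrt{k_s}\,\sigma_j^{(n)}\hat x^\perp$ (see \eqref{eigsys-fbr}) and $\hat x\cdot\hat x^\perp=0$, a direct integration yields
\[
\left\langle v^\infty,X_{B_R,j}^{(n)}\right\rangle_{\mathbb{S}}
=2\pi\,\frac{\sqrt{k_p}\,\hat v_{p,n}+\sqrt{k_s}\,\overline{\sigma_j^{(n)}}\,\hat v_{s,n}}{\sqrt{1+|\sigma_j^{(n)}|^2}},\qquad j=1,2 .
\]
Because $\beta_n^2\neq4$ forces $\sigma_1^{(n)}\neq\sigma_2^{(n)}$, for each fixed $n$ the two functionals $j=1,2$ are linearly independent, so the pairs $(\langle v^\infty,X_{B_R,1}^{(n)}\rangle,\langle v^\infty,X_{B_R,2}^{(n)}\rangle)$ and $(\hat v_{p,n},\hat v_{s,n})$ determine one another through an invertible $2\times2$ matrix built from $\sigma_1^{(n)},\sigma_2^{(n)}$. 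Hence the convergence of $I(B_R)$ is equivalent to the convergence of an explicit weighted $\ell^2$-sum in $(A_n,B_n)$, with weights $1/|\lambda^{(n)}_{B_R,j}|$ modulated by the polarization factors $1/(1+|\sigma_j^{(n)}|^2)$.

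The core of the proof is to match this weighted sum with the $(H^{1/2})$ condition. For this I would substitute the eigenvalue asymptotics \eqref{FBR-VAL}, the eigenvector asymptotics \eqref{FBR-VEC}, and $\sigma_1^{(n)}\sim i\,t_p^{n}/t_s^{n}$, $\sigma_2^{(n)}\sim -i\,t_s^{n}/t_p^{n}$ from Remark~\ref{asy-beta}. The decisive structural facts are that the two branches decay on vastly different factorial scales—$|\lambda^{(n)}_{B_R,1}|\asymp t_p^{2n+2}t_s^{2n}/\big(2^{2n}(n+1)!\,n!\,(t_p^{2n}+t_s^{2n})\big)$ whereas $|\lambda^{(n)}_{B_R,2}|\asymp (t_p^{2n}+t_s^{2n})/\big(2^{2n-2}(n-1)!\,(n-2)!\big)$—and that the eigenvectors $X_{n,1},X_{n,2}$ are exactly orthogonal in $\mathbb{C}^2$, since $\sigma_1^{(n)}\overline{\sigma_2^{(n)}}=-1$ gives $1+\sigma_1^{(n)}\overline{\sigma_2^{(n)}}=0$. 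This near-decoupling lets me compare, branch by branch, the factorial smallness of $|\lambda^{(n)}_{B_R,j}|$ with the factorial largeness of the entries of $\mathcal{H}_n$ (of size $2^{n}n!/t_\alpha^{n}$ by the Hankel asymptotics recalled in Remark~\ref{asy-beta}) and to check that the product reproduces exactly the weight $1+|n|$. I expect this asymptotic bookkeeping—carried out uniformly over $j=1,2$ and using the recurrence relations and the Wronskian identity assembled in Subsection~\ref{sec5.2}—to be the main obstacle.

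Granting the equivalence of the two summability conditions, both directions follow: if $I(B_R)<\infty$ then $(A_n,B_n)$ decay at the factorial rate forcing $v^s|_{\partial B_R}\in(H^{1/2}(\partial B_R))^2$, so $v^\infty$ is the far field of a radiating solution of \eqref{Navier-disk}; conversely, such a solution makes the weighted sum, and hence $I(B_R)$, finite. Uniqueness of the radiating solution with prescribed far field (Rellich's lemma) makes the extension obtained from $(A_n,B_n)$ unambiguous, while Remark~\ref{hn-determin} and the hypothesis that $\omega^2$ is not a Dirichlet eigenvalue of $-\Delta^{\ast}$ in $B_R$ guarantee that $\mathcal{H}_n$ is invertible and every $\lambda^{(n)}_{B_R,j}\neq0$, so that the series $I(B_R)$ is well defined.
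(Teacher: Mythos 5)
Your proposal follows essentially the same route as the paper: expand $v^{s}$ in Hankel/Fourier modes, compute $\langle v^{\infty}, X^{(n)}_{B_R,j}\rangle_{\mathbb{S}}$ explicitly mode by mode, and use the large-$n$ asymptotics of the Bessel and Hankel functions from \eqref{FBR-VAL}, \eqref{FBR-VEC} and Remark \ref{asy-beta} to show that $I(B_R)$ and $\|v^{s}\|^{2}_{\left(H^{1/2}(\partial B_R)\right)^2}$ are weighted $\ell^2$-sums in the Hankel coefficients with comparable weights, hence converge simultaneously. The paper carries out exactly this bookkeeping (arriving at the mutually comparable quantities $C_n^{(1)}$ and $C_n^{(2)}$ in \eqref{vs_Hhalfnorm} and \eqref{IBR}); your extra observation that $1+\sigma_1^{(n)}\overline{\sigma_2^{(n)}}=0$, so the two functionals $j=1,2$ form an invertible change of basis from the $p$- and $s$-Fourier coefficients, is a sound way to make that branch-by-branch comparison rigorous.
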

	
	\begin{proof} Let $v^s$ be a Kupradze radiating solution to (\ref{Navier-disk}). 
By the Hodge decomposition \eqref{Hodgedecom}, we may decompose $v^s$ into its compressional and shear parts by $v^s = \grad \varphi + \curl \psi$, where $\varphi = \sum_{n \in \mathbb{Z}}  a_n H^{(1)}_n(k_p r)\text{e}^{in\theta_x}$ and $\psi = \sum_{n \in \mathbb{Z}} b_n H^{(1)}_n(k_s r)\text{e}^{in\theta_x}$. Straightforward calculations lead to 
	\begin{equation}
		\begin{split}\label{vs}
			v^s &= \sum_{n \in \mathbb{Z}} \frac{\text{e}^{in\theta_x}}{r} \left[
				\left(
					k_p r H^{(1)\prime}_n(k_p r) a_n - in H^{(1)}_n(k_s r)b_n
				\right) \hat{x} + 
				\left(
					in H^{(1)}_n(k_p r)a_n + k_s r H^{(1)\prime}_n(k_s r)b_n
				\right) \hat{x}^{\bot}\right]\\
			&= \sum_{n \in \mathbb{Z}} \frac{\text{e}^{in\theta_x}}{r} 
			\begin{pmatrix}
				\hat{x} & \hat{x}^{\bot}
			\end{pmatrix}
			\begin{pmatrix}
				k_p r H^{(1)\prime}_n(k_p r) & - in H^{(1)}_n(k_s r) \\
				in H^{(1)}_n(k_p r) & k_s r H^{(1)\prime}_n(k_s r)
			\end{pmatrix}
			\begin{pmatrix}
				a_n \\ b_n
			\end{pmatrix}.
		\end{split}
	\end{equation}
	By the asymptotic behaviour of Hankel functions (see \cite{kress.1998})
	\begin{equation}
		\begin{split}
			H_{n}^{(1)}(z) &= \frac{2^{n}(n-1)!}{\pi iz^{n}}
			\left(1+\mathcal{O}(\frac{1}{n})\right),\quad n \rightarrow +\infty,\\
			H_{n}^{(1)\prime}(z) &= -\frac{2^{n} n!}{\pi iz^{n+1}} 
			\left(1+\mathcal{O}(\frac{1}{n})\right),\quad n \rightarrow +\infty,
		\end{split}
	\end{equation}
	we have 
	\begin{equation}
		\begin{split}
			v^s(r,\theta_x) &= \sum_{n \in \mathbb{Z}} \frac{\text{e}^{in\theta_x}}{r} 
			\begin{pmatrix}
				\hat{x} & \hat{x}^{\bot}
			\end{pmatrix}
			\begin{pmatrix}
				k_p r \frac{-2^n n!}{\pi i k_p^{n+1} r^{n+1} } & - in \frac{2^n (n-1)!}{\pi i k_s^nr^n} \\
				in \frac{2^n (n-1)!}{\pi i k_p^nr^n} & k_s r \frac{-2^n n!}{\pi i k_s^{n+1} r^{n+1} }
			\end{pmatrix}
			\begin{pmatrix}
				a_n \\ b_n
			\end{pmatrix} \left(1+\mathcal{O}(\frac{1}{n})\right)\\
			&= \sum_{n \in \mathbb{Z}} \frac{\text{e}^{in\theta_x}}{r} \frac{2^n n!}{\pi r^{n} }
			\begin{pmatrix}
				\hat{x} & \hat{x}^{\bot}
			\end{pmatrix}
			\begin{pmatrix}
				i k_p^{-n}  & -  k_s^{-n} \\
				k_p^{-n} & i k_s^{-n} 
			\end{pmatrix}
			\begin{pmatrix}
				a_n \\ b_n
			\end{pmatrix} \left(1+\mathcal{O}(\frac{1}{n})\right).
		\end{split}
	\end{equation}
This gives the leading term of the $H^{1/2}$-norm on $\partial B_R$ as
\begin{equation}\label{vs_Hhalfnorm}
		\begin{split}
			||v^s||^2_{\left(H^{1/2}(\partial B_R)\right)^2} &= \sum_{n \in \mathbb{Z}} (1+n^2)^{1/2} \frac{1}{R^2} \frac{2^{2n} n! n! }{\pi^2  R^{2n}} 2 \left| i k_p^{-n} a_n - k_s^{-n} b_n\right|^2  \left(1+\mathcal{O}(\frac{1}{n})\right)\\
			&\sim  \sum_{n \in \mathbb{Z}}  \frac{1}{R^2} \frac{2^{2n+1} (n+1)! n! }{\pi^2 k_p^n k_s^n R^{2n}}  \left| i \left(\frac{k_s}{k_p}\right)^{n/2} a_n - \left(\frac{k_p}{k_s}\right)^{n/2} b_n\right|^2  \left(1+\mathcal{O}(\frac{1}{n})\right)\\
			&=  \sum_{n \in \mathbb{Z}} \frac{2^{2n+1} (n+1)! n! }{\pi^2 R^2 t_p^n t_s^n }  \left| i \left(\frac{t_s}{t_p}\right)^{n/2} a_n - \left(\frac{t_p}{t_s}\right)^{n/2} b_n\right|^2  \left(1+\mathcal{O}(\frac{1}{n})\right)\\
			&=  \sum_{n \in \mathbb{Z}} C_n^{(1)} \frac{2^{2n+1} (n+1)! n! }{\pi^2 R^2 t_p^n t_s^n } \left(1+\mathcal{O}(\frac{1}{n})\right),
		\end{split}
	\end{equation}
	where $$C_n^{(1)} :=  \left| \left(\frac{t_s}{t_p}\right)^{n/2} a_n + \left(\frac{t_p}{t_s}\right)^{n/2}i b_n\right|^2.$$
On the other hand, the far-field pattern of $v^s$ can be calculated as 
	\begin{equation}
		v^{\infty}(\hat{x}) = \sqrt{\frac{2k_p}{\pi}}\text{e}^{i\frac{\pi}{4}} \sum_{n \in \mathbb{Z}} i^{-n} a_n \text{e}^{in\theta_x} \hat{x} + \sqrt{\frac{2k_s}{\pi}}\text{e}^{i\frac{\pi}{4}} \sum_{n \in \mathbb{Z}} i^{-n} b_n \text{e}^{in\theta_x} \hat{x}^{\bot}.
	\end{equation}
	We proceed with the proof by computing the leading term of the  the indicator $I(B_R)$. 
Using \eqref{eigsys-fbr} and \eqref{FBR-VEC},  the inner product over $L^2(\s)^2$ can be calculated as
	\begin{equation}
		\begin{split}
			&\quad \left|\left\langle v^{\infty},X_{B_R,j}^{(n)}\right\rangle_{\SS} \right|^2 \\
			&= \left| \int_{\SS} \sqrt{\frac{2}{\pi}} k_p \text{e}^{i\frac{\pi}{4}} \sum_{m \in \mathbb{Z}} i^{-m} a_m \text{e}^{im\theta_x} X_{n,j}^{(1)}\text{e}^{-in\theta_x} + \sqrt{\frac{2}{\pi}}k_s \text{e}^{i\frac{\pi}{4}} \sum_{m \in \mathbb{Z}} i^{-m} b_m \text{e}^{im\theta_x} X_{n,j}^{(2)}\text{e}^{-in\theta_x} d\theta_x \right|^2 \\
			&= \left|  \sqrt{\frac{2}{\pi}}  \text{e}^{i\frac{\pi}{4}} i^{-n} \int_{\SS} k_p a_n X_{n,j}^{(1)} + k_s b_n X_{n,j}^{(2)} d\theta_x \right|^2\\
			&= 8\pi \left| k_p a_n X_{n,j}^{(1)} + k_s b_n X_{n,j}^{(2)} \right|^2,
		\end{split}
	\end{equation}
	for $j=1,2$.
Using asymptotic behavior shown in (\ref{FBR-VAL}) and (\ref{FBR-VEC}), it is easy to check that, as $n \to \infty$,
\[
\left|\lambda^{(n)}_{B_R,1}\right| \sim \left|\lambda^{(n)}_{B_R,2} \right|\;n^{-4}, \quad 
\left|\left\langle v^{\infty},X_{B_R,1}^{(n)}\right\rangle_{\SS} \right|^2 \sim \left|\left\langle v^{\infty},X_{B_R,2}^{(n)}\right\rangle_{\SS} \right|^2.
\] 
	Thus, 
	\begin{equation}
		\begin{split}\label{IBR}
		I(B_R) &= \sum_{n \in \mathbb{Z}} \left(\frac{\left|\left\langle v^{\infty}, X^{(n)}_{B_R,1}\right\rangle_{\mathbb{S}}\right|^{2}}{\left|\lambda^{(n)}_{B_R,1}\right|} 
		+ \frac{\left|\left\langle v^{\infty}, X^{(n)}_{B_R,2}\right\rangle_{\mathbb{S}}\right|^{2}}{\left|\lambda^{(n)}_{B_R,2}\right|}\right)
		\\ 
		&= \sum_{n \in \mathbb{Z}} \frac{8\pi \left| k_p a_n X_{n,1}^{(1)} + k_s b_n X_{n,1}^{(2)} \right|^2}{\left|\lambda^{(n)}_{B_R,1}\right|}\left(1+\mathcal{O}(\frac{1}{n})\right) 	\\
		&= \sum_{n \in \mathbb{Z}}  C_{n}^{(2)} \sqrt{\frac{\omega}{2\pi}} \frac{2^{2n+3} (n+1)! n!}{t_p^{n}  t_s^{n} R^2 k_p^2 }  \left(1+\mathcal{O}(\frac{1}{n})\right),	\\
		\end{split}
	\end{equation}
	where $$C_{n}^{(2)} :=  \left| k_p \left(\frac{t_s}{t_p}\right)^{n/2} a_n + k_s \left(\frac{t_p}{t_s}\right)^{n/2}i b_n\right|^2.$$
Noting that $\min \{ k_p^2,k_s^2 \}C_{n}^{(1)}\leqslant C_{n}^{(2)} \leqslant \max \{k_p^2,k_s^2 \}C_{n}^{(1)}$, 
we conclude that the series \eqref{vs_Hhalfnorm} and \eqref{IBR} have the same convergence.

Since the boundedness of $||v^s||_{\left(H^{1/2}(\partial B_R)\right)^2}$ implies that $v^s$ of the form \eqref{vs} is indeed a radiating solution in $\mathbb{R}^2 \backslash \overline{B_R}$ with the far-field pattern $v^\infty$. This proves that $I(B_R)<\infty$ if and only if $v^s$ is a Kupradze radiating solution in $\mathbb{R}^2 \backslash \overline{B_R}$ with the far-field pattern $v^\infty$ and with the $H^{1/2}$-boundary data on $\partial B_R$.  The proof of Corollaries \ref{fmdiskff} is thus complete.
	\end{proof}

To prove Corollary \ref{fm_convex_domainpp} for testing disks, we need to consider spectral systems of the far-field operators
 $F_{B_R}^{(p)}$ and $F_{B_R}^{(s)}$ .  By Definition \ref{def-ps}, it follows that
 \begin{equation*}
 \begin{split}
( F_{B_R}^{(p)} g_p) (\hat{x}):=\text{e}^{-\frac{i\pi}{4}}\sqrt{\frac{k_p}{\omega}} \int_{\s} 
 u^{\infty}_{B_R,pp}(\hat{x};d) \, g_p(d)\, ds(d),\quad g_p\in L_p^2(\s),\\
 ( F_{B_R}^{(s)} g_s) (\hat{x}):=\text{e}^{-\frac{i\pi}{4}}\sqrt{\frac{k_s}{\omega}} \int_{\s} 
 u^{\infty}_{B_R,ss}(\hat{x}; d) \, g_s(d)\, ds(d),\quad g_s\in L_s^2(\s). \end{split}
  \end{equation*}
Using (\ref{u-inftypp}) and \eqref{matrix-uinf}, we see 
	\begin{equation*}
		u^{\infty}_{B_R,pp}(\hat{x}) = \sqrt{\frac{2}{\pi k_p}} \text{e}^{i\frac{\pi}{4}} i \sum_{n \in \mathbb{Z}} \Sigma_n(1,1) \text{e}^{in\theta},\quad
		u^{\infty}_{B_R,ss}(\hat{x}) = \sqrt{\frac{2}{\pi k_s}} \text{e}^{i\frac{\pi}{4}} i \sum_{n \in \mathbb{Z}} \Sigma_n(2,2) \text{e}^{in\theta},
	\end{equation*}
	where $\Sigma_n(i,j)$ dentoes the $(i,j)$-th entry of the matrix $\Sigma_n$.
Now we can get the spectral systems of the operators $F_{B_R}^{(p)}$ and $F_{B_R}^{(s)}$:
	\begin{equation}\label{fbr-eigpp}
		\begin{split}
			\eta_{B_R,p}^{(n)} &= \sqrt{\frac{2}{\pi k_p}} \text{e}^{i\frac{\pi}{4}} i \Sigma_n(1,1),\quad
			\varphi_{B_R,p}^{(n)}(\hat{x})= \text{e}^{in\theta_x},\\
			\eta_{B_R,s}^{(n)} &= \sqrt{\frac{2}{\pi k_s}} \text{e}^{i\frac{\pi}{4}} i \Sigma_n(2,2),\quad
			\varphi_{B_R,s}^{(n)}(\hat{x}) = \text{e}^{in\theta_x}.
		\end{split}
	\end{equation}
	Taking $\Omega = B_R$, we can rewrite Corollary \ref{fm_convex_domainpp} as 
	\begin{corollary}
		Let $w^\infty_{\alpha\alpha} \in L^2_{\alpha}(\s)$ ($\alpha = p, s$) and assume that
		$\omega^2$ is not a Dirichlet eigenvalue of $-\Delta^{\ast}$ over $B_R$. Denote by $(\lambda^{(n)}_{B_R,\alpha}, \varphi^{(n)}_{B_R,\alpha})$ a spectral system of the positive operator $F^{(\alpha)}_{B_R,\#} $. Then 
	   \begin{equation}
		   I^{(\alpha)}(B_R) = \sum_{n \in \mathbb{Z}} \frac{\left|\left\langle w^\infty_{\alpha\alpha}, \varphi^{(n)}_{B_R, \alpha}\right\rangle_{\mathbb{S}}\right|^{2}}{\left|\lambda^{(n)}_{B_R, \alpha}\right|} < + \infty
	   \end{equation}
		   if and only if $w^\infty_{\alpha\alpha}$ is the far-field pattern of some Sommerfeld radiating solution $w^s_{\alpha\alpha}$ which is defined in $\mathbb{R}^2 \backslash \overline{B_R}$ and $w^s_{\alpha\alpha} |_{\partial B_R(z)}\in H^{-1/2}(\partial B_R).$ That is,  $w^s_{\alpha\alpha}$ satisfies the following boundary value problem of the Helmholtz equation
	   \begin{equation}\label{Helmholtz-disk}
		   \Delta w^s_{\alpha\alpha} + k_{\alpha}^{2} w^s_{\alpha\alpha} = 0 \  \text{in} \  \mathbb{R}^2 \backslash \overline{B_R},\quad  w^s_{\alpha\alpha} |_{\partial B_R}\in H^{-1/2}(\partial B_R).
	   \end{equation}
		
	\end{corollary}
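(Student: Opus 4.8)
The plan is to mirror the proof of Corollary~\ref{fmdiskff}, replacing the vector field $v^s$ by the scalar field $w^s_{\alpha\alpha}$ and the $H^{1/2}$-trace norm by the $H^{-1/2}$-trace norm, and to read off all spectral data directly from \eqref{fbr-eigpp}. The starting observation is that $F^{(\alpha)}_{B_R}$ is normal and is already diagonalized by the orthogonal system $\{e^{in\theta_x}\}_{n\in\Z}$, with eigenvalues $\eta^{(n)}_{B_R,\alpha}$ given in \eqref{fbr-eigpp}. Consequently $F^{(\alpha)}_{B_R,\#}=|\mathrm{Re}\,F^{(\alpha)}_{B_R}|+|\mathrm{Im}\,F^{(\alpha)}_{B_R}|$ shares the same eigenfunctions $\varphi^{(n)}_{B_R,\alpha}=e^{in\theta_x}$, with eigenvalues $\lambda^{(n)}_{B_R,\alpha}=|\mathrm{Re}\,\eta^{(n)}_{B_R,\alpha}|+|\mathrm{Im}\,\eta^{(n)}_{B_R,\alpha}|$ obeying $|\eta^{(n)}_{B_R,\alpha}|\le\lambda^{(n)}_{B_R,\alpha}\le\sqrt2\,|\eta^{(n)}_{B_R,\alpha}|$. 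Hence it suffices to track the asymptotics of $|\eta^{(n)}_{B_R,\alpha}|$, equivalently of the diagonal entry $\Sigma_n(1,1)$ (for $\alpha=p$) or $\Sigma_n(2,2)$ (for $\alpha=s$).

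Next I would fix the scalar ansatz: any Sommerfeld radiating solution of $\Delta w^s_{\alpha\alpha}+k_\alpha^2 w^s_{\alpha\alpha}=0$ in $\R^2\backslash\overline{B_R}$ expands as $w^s_{\alpha\alpha}=\sum_{n\in\Z}c_n H^{(1)}_n(k_\alpha r)e^{in\theta_x}$. Using the Hankel asymptotics already recorded in the excerpt, the far-field pattern $w^\infty_{\alpha\alpha}$ has Fourier coefficients proportional to $c_n i^{-n}$, so each inner product collapses to a single mode, $|\langle w^\infty_{\alpha\alpha},\varphi^{(n)}_{B_R,\alpha}\rangle_{\s}|^2 = C\,|c_n|^2$ with $C>0$ independent of $n$. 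In parallel, the trace norm is computed mode-by-mode as $\|w^s_{\alpha\alpha}\|^2_{H^{-1/2}(\partial B_R)}\sim\sum_{n\in\Z}(1+n^2)^{-1/2}|c_n|^2\,|H^{(1)}_n(t_\alpha)|^2$, and the Neumann asymptotics give $|H^{(1)}_n(t_\alpha)|^2\sim 2^{2n}((n-1)!)^2/(\pi^2 t_\alpha^{2n})$ as $n\to\infty$. This reduces both the indicator series and the norm series to explicit series in $|c_n|^2$, the only remaining unknown being the decay rate of the eigenvalues.

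The main obstacle is precisely the asymptotics of $\Sigma_n(\alpha,\alpha)$, the diagonal entries of $\Sigma_n=\mathcal{H}_n^{-1}\mathcal{J}_n$. Writing $\Sigma_n(1,1)=(\det\mathcal{H}_n)^{-1}\big[t_pt_s H^{(1)\prime}_n(t_s)J'_n(t_p)-n^2 H^{(1)}_n(t_s)J_n(t_p)\big]$ and similarly for $\Sigma_n(2,2)$, the numerator has a clean leading term, but the denominator $\det\mathcal{H}_n=t_pt_sH^{(1)\prime}_n(t_p)H^{(1)\prime}_n(t_s)-n^2 H^{(1)}_n(t_p)H^{(1)}_n(t_s)$ exhibits the same leading-order cancellation already visible in Remark~\ref{asy-beta} and \eqref{FBR-VAL}: the two naive leading terms cancel at order $2^{2n}(n!)^2/(\pi^2 t_p^n t_s^n)$, so one must retain the next-order corrections—via the Wronskian $J_nY'_n-J'_nY_n=2/(\pi t)$ together with the $\mathcal{O}(1/n)$ refinements of the Bessel and Neumann expansions—to extract the true size of $\det\mathcal{H}_n$. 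This is the genuinely delicate computation that has no counterpart in the acoustic case. Carrying it out should yield $|\Sigma_n(\alpha,\alpha)|^{-1}\sim C'_\alpha\,(1+n^2)^{-1/2}|H^{(1)}_n(t_\alpha)|^2$ for a nonzero constant $C'_\alpha$, i.e.\ $\lambda^{(n)}_{B_R,\alpha}$ decays at exactly the rate dictated by the $H^{-1/2}$ weight.

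Finally, combining the three previous steps, the $n$-th term of $I^{(\alpha)}(B_R)$, namely $|\langle w^\infty_{\alpha\alpha},\varphi^{(n)}_{B_R,\alpha}\rangle_{\s}|^2/\lambda^{(n)}_{B_R,\alpha}$, is comparable uniformly in $n$ to the $n$-th term $(1+n^2)^{-1/2}|c_n|^2|H^{(1)}_n(t_\alpha)|^2$ of $\|w^s_{\alpha\alpha}\|^2_{H^{-1/2}(\partial B_R)}$, so the two series converge or diverge together. Thus $I^{(\alpha)}(B_R)<\infty$ is equivalent to $w^s_{\alpha\alpha}|_{\partial B_R}\in H^{-1/2}(\partial B_R)$; in that case the bounded trace guarantees that the expansion indeed defines a genuine Sommerfeld radiating solution of the Helmholtz equation \eqref{Helmholtz-disk} with far-field pattern $w^\infty_{\alpha\alpha}$, and conversely any such solution produces a convergent indicator. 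The two implications are then read off exactly as at the end of the proof of Corollary~\ref{fmdiskff}.
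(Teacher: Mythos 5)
Your proposal follows essentially the same route as the paper's proof: diagonalize $F^{(\alpha)}_{B_R,\#}$ on the Fourier basis $\{e^{in\theta_x}\}$ via the spectral system \eqref{fbr-eigpp}, expand $w^s_{\alpha\alpha}$ in Hankel functions so that each inner product collapses to a single mode, and match the indicator series term-by-term against the $H^{-1/2}(\partial B_R)$ trace norm. The one step you defer --- the asymptotics of $\Sigma_n(\alpha,\alpha)$ after the leading-order cancellation in $\det\mathcal{H}_n$ --- is precisely the computation the paper also only states (as the asymptotic formula for $\lambda^{(n)}_{B_R,\alpha}$) without detail, and your predicted form $|\Sigma_n(\alpha,\alpha)|^{-1}\sim C\,(1+n^2)^{-1/2}|H^{(1)}_n(t_\alpha)|^2$ is consistent with that formula.
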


	\begin{proof}
	Without losing generality, we only consider the case of $\alpha = p$. The case of $\alpha = s$ can be proceeded in a similar manner. 
Since
	\begin{equation*}
		\lambda_{B_R,p}^{(n)} = |\text{Re}(\eta_{B_R,p}^{(n)})| + |\text{Im}(\eta_{B_R,p}^{(n)})|,
	\end{equation*}	
we deduce from \eqref{fbr-eigpp} and the definition  $\Sigma_n:=\mathcal{H}_n^{-1} \mathcal{J}_n$ that	\begin{equation}
		\lambda_{B_R,p}^{(n)} = \sqrt{\frac{\pi}{ k_p}} \frac{t_p^{2n}}{2^{2n-1} (n-1)! (n-2)! (t_p^2 + t_s^2)}\left(1+\mathcal{O}(\frac{1}{n})\right), \ 
		\varphi_{B_R,p}^{(n)} = \text{e}^{in\theta_x}.
	\end{equation}
By the Jacobi-Anger expansion (see e.g.,\cite{kress.1998}), a Sommerfeld radiating solution $w^s_{pp}$ to the Helmholtz equation in $|x|>R$  can be expanded into the series
    \begin{equation} 
		w^s_{pp}(x) = \sum_{n \in \mathbb{Z}} D_{n} H_{n}^{(1)}(k_p |x|) \text{e}^{i n \theta_x},\quad |x|>R,
    	\quad x=(|x|,\theta_x),
    \end{equation}
with the far-field pattern given by (see \cite[(3.82)]{kress.1998})
    \begin{equation}
    w^{\infty}_{pp}(\hat{x}) = \sum_{n \in \mathbb{Z}} D_{n} C_{n,p} \text{e}^{in \theta_x}, \quad C_{n,p}:=\sqrt{\frac{2}{k_p \pi}} \text{e}^{-i (\frac{n\pi}{2}+\frac{\pi}{4})}.
    \end{equation}
	Hence,
	\begin{equation}\label{IBRP}
		\begin{split}
			I^{(p)}(B_R) &= \sum_{n \in \mathbb{Z}} \frac{\left|\left\langle \sum_{m \in \mathbb{Z}} D_{m} C_{m,p} \text{e}^{im \theta_x}, \varphi^{(n)}_{B_R, p}\right\rangle_{\mathbb{S}}\right|^{2}}{\left|\lambda^{(n)}_{B_R, p}\right|}\\
			&= \sum_{n \in \mathbb{Z}} \frac{\left| 2\pi D_{n} C_{n,p}\right|^{2}}{\left|\lambda^{(n)}_{B_R, p}\right|}\\
			&= \sum_{n \in \mathbb{Z}} \sqrt{\frac{\pi}{k_p}} \frac{2^{2n+2} (n-1)! (n-2)! (t_p^2+t_s^2)}{t_p^{2n}}|D_n|^2 \left(1+\mathcal{O}(\frac{1}{n})\right).
		\end{split}
	\end{equation}
	By the definition of $H^{-1/2}$-norm on $\partial B_R$, we get
	\begin{equation}\label{ws_bound}
		\begin{split}
		   ||w^s_{pp}(x)||^{2}_{H^{-1/2}(\partial B_R)} &= \sum_{n \in \mathbb{Z}} (1+n^2)^{-1/2} |D_n H^{(1)}_n(t_p)|^2\\
		   &=\sum_{n \in \mathbb{Z}} |D_{n}|^{2} \frac{2^{2n} (n-1)! (n-2)!}{\pi ^2 t_p^{2n}}\left(1+\mathcal{O}(\frac{1}{n})\right).
		\end{split}
	\end{equation}
	Obviously, the series \eqref{IBRP} and \eqref{ws_bound} have the same convergence. 
	On the other hand, following the proof of \cite[Theorem 2.15]{kress.1998}, it is not difficulty to prove that the boundedness of $||w^s_{pp}||_{H^{-1/2}(\partial B_R)}$  implies  that $w^s_{pp}$ is a radiating solution in $|x|>R$ with the far-field pattern $w^{\infty}_{pp}(\hat{x})$. This proves that $I^{(p)}(B_R)<\infty$ if and only if $w^s_{pp}$ is a radiating solution to the boundary value problem of the Helmholtz equation \eqref{Helmholtz-disk}, with the far-field pattern $w^{\infty}_{pp}(\hat{x})$. 
	\end{proof}

	\section{Imaging schemes with testing disks}\label{sec6}

	
Let $B_R(z)=z+B_R \coloneqq \{y\in \R^2 : y=z+x, x\in B_R\}$ be a rigid disk centered at $z\in \R$ with radius $R>0$. By the translation relations (see e.g., (2.13)-(2.16),\cite{liuxd2019}), we know
	\begin{equation}
		u_{B_R(z),\alpha\beta}^{\infty}(\hat{x}) = \text{e}^{-ik_{\alpha}z\cdot \hat{x}} \text{e}^{ik_{\beta}z\cdot d} u_{B_R,\alpha\beta}^{\infty}(\hat{x}),
	\end{equation}
	where $\alpha = p,s$ and $\beta = p,s$. Define the matrices
	\begin{equation*}
	\begin{split}
		&U_{B_R(z)}^{\infty}(\hat{x}) \coloneqq
		\begin{pmatrix}
			u^{\infty}_{B_R(z),pp}(\hat{x}) & u^{\infty}_{B_R(z),ps}(\hat{x}) \\ 
			u^{\infty}_{B_R(z),sp}(\hat{x}) & u^{\infty}_{B_R(z),ss}(\hat{x})
		\end{pmatrix},\\
	&	M_{d,z} \coloneqq 
		\begin{pmatrix}
			\text{e}^{-ik_pz\cdot d} & 0\\
			0 & \text{e}^{-ik_sz\cdot d}
		\end{pmatrix}, \quad
		M_{\hat{x},z} \coloneqq 
		\begin{pmatrix}
			\text{e}^{-ik_pz\cdot \hat{x}} & 0\\
			0 & \text{e}^{-ik_sz\cdot \hat{x}}
		\end{pmatrix}.
		\end{split}
	\end{equation*}
	Then, it holds that
	\begin{equation}
		U_{B_R(z)}^{\infty} = M_{d,z}^{-1}\; U^\infty_{B_R} \;M_{\hat{x},z}.
	\end{equation}
Using the previous relation, we obtain spectral systems for the operators $F_{\Omega}$, $F_{\Omega}^{(p)}$ and 	$F_{\Omega}^{(s)}$ with $\Omega=B_{R}(z)$ as follows.
	\begin{corollary}\label{C6.1} The eigenvalues $\lambda_{B_R(z), j}^{(n)}$ and the associated eigenfunctions $X^{(n)}_{B_R(z), j}$ of the far-field operator $F_{B_R(z)}$ are given by (e.g. (\ref{eigsys-fbr}))
\begin{equation*}
\begin{split}
&\lambda_{B_R(z), j}^{(n)}=\lambda_{B_R, j}^{(n)},\\
&X^{(n)}_{B_R(z), j}(\hat{x})=\frac{\left(\sqrt{k_p}\text{e}^{-ik_pz\cdot\hat{x}}\hat{x}+\sqrt{k_s}\sigma_j^{(n)}\text{e}^{-ik_sz\cdot \hat{x}}\hat{x}^{\bot}\right)}{\sqrt{1+|\sigma_j^{(n)}|^2}}
\text{e}^{in \theta_x}
\end{split}	
\end{equation*}	
	for $n\in \Z, \quad j=1,2$.
	Moreover, the spectral systems of the operators $F_{B_R(z)}^{(p)}$ and $F_{B_R(z)}^{(s)}$ take the form (e.g. (\ref{fbr-eigpp}))
	\begin{equation}
		\begin{split}
			\eta_{B_R(z),p}^{(n)} &= \sqrt{\frac{2}{\pi k_p}} \text{e}^{i\frac{\pi}{4}} i \Sigma_n(1,1),\quad
			\varphi_{B_R(z),p}^{(n)}(\hat{x})= \text{e}^{in\theta_x}\text{e}^{-ik_pz\cdot\hat{x}},\\
			\eta_{B_R(z),s}^{(n)} &= \sqrt{\frac{2}{\pi k_s}} \text{e}^{i\frac{\pi}{4}} i \Sigma_n(2,2),\quad
			\varphi_{B_R(z),s}^{(n)}(\hat{x}) = \text{e}^{in\theta_x}\text{e}^{-ik_sz\cdot\hat{x}}.
		\end{split}
	\end{equation}\end{corollary}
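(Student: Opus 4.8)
The plan is to feed the translation relations for the far-field patterns directly into the definitions of $F_{B_R(z)}$, $F^{(p)}_{B_R(z)}$ and $F^{(s)}_{B_R(z)}$, and to exploit the fact that the candidate eigenfunctions carry exactly the phase factors needed to cancel the incident-direction exponentials produced by the translation. Once this cancellation is performed, each operator reduces to the corresponding operator for the centered disk $B_R$, whose spectral system is already known from (\ref{eigsys-fbr}) and (\ref{fbr-eigpp}); the only residual effect is a multiplication of the eigenfunctions by observation-direction phases, leaving the eigenvalues untouched.

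First I would treat the full operator $F_{B_R(z)}$. Taking $g=X^{(n)}_{B_R(z),j}$ as in the statement, its compressional and shear densities are $g_p(d)=\sqrt{k_p}\,e^{-ik_p z\cdot d}e^{in\theta_d}/\sqrt{1+|\sigma_j^{(n)}|^2}$ and $g_s(d)=\sqrt{k_s}\,\sigma_j^{(n)} e^{-ik_s z\cdot d}e^{in\theta_d}/\sqrt{1+|\sigma_j^{(n)}|^2}$. Inserting $u^{\infty}_{B_R(z),\alpha\beta}=e^{-ik_\alpha z\cdot\hat{x}}e^{ik_\beta z\cdot d}u^{\infty}_{B_R,\alpha\beta}$ into the operator definition (\ref{op-br}) with $\Omega=B_R(z)$, the compressional incident term acquires $e^{ik_p z\cdot d}$ from the translation, which cancels against the $e^{-ik_p z\cdot d}$ in $g_p(d)$; likewise the shear incident term's $e^{ik_s z\cdot d}$ cancels against $g_s(d)$. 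The surviving observation phases are $e^{-ik_p z\cdot\hat{x}}$ on the $\hat{x}$-component and $e^{-ik_s z\cdot\hat{x}}$ on the $\hat{x}^{\bot}$-component; since these are independent of $d$, they factor out of the integral.

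Next I would identify the remaining $d$-integrals with those arising in the computation of $F_{B_R}X^{(n)}_{B_R,j}$. Because $(\lambda^{(n)}_{B_R,j},X^{(n)}_{B_R,j})$ is the spectral pair of $F_{B_R}$ by (\ref{eigsys-fbr}), these integrals equal $\lambda^{(n)}_{B_R,j}\sqrt{k_p}\,e^{in\theta_x}$ and $\lambda^{(n)}_{B_R,j}\sqrt{k_s}\,\sigma_j^{(n)} e^{in\theta_x}$, respectively. Reattaching the extracted observation phases reassembles precisely $\lambda^{(n)}_{B_R,j}\,X^{(n)}_{B_R(z),j}(\hat{x})$, which proves both that the eigenvalues are unchanged and that the eigenfunctions are merely phase-modulated. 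The scalar operators $F^{(p)}_{B_R(z)}$ and $F^{(s)}_{B_R(z)}$ follow by the identical mechanism with a single phase: substituting $u^{\infty}_{B_R(z),\alpha\alpha}=e^{-ik_\alpha z\cdot\hat{x}}e^{ik_\alpha z\cdot d}u^{\infty}_{B_R,\alpha\alpha}$ together with the density $\varphi^{(n)}_{B_R(z),\alpha}=e^{in\theta_d}e^{-ik_\alpha z\cdot d}$ cancels the $d$-phase and factors out $e^{-ik_\alpha z\cdot\hat{x}}$, reducing the eigenvalue relation to $F^{(\alpha)}_{B_R}\varphi^{(n)}_{B_R,\alpha}=\eta^{(n)}_{B_R,\alpha}e^{in\theta_x}$ from (\ref{fbr-eigpp}) and yielding $\eta^{(n)}_{B_R(z),\alpha}=\eta^{(n)}_{B_R,\alpha}$.

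The computation is essentially bookkeeping, and I expect the only delicate point to be keeping the integration variable $d$ and the observation variable $\hat{x}$ strictly separate throughout, so that the incident phases are cancelled on the $d$-dependent factors while the observation phases are extracted on the $\hat{x}$-dependent ones. A closely related subtlety is the component-by-component matching of phases: the $\hat{x}$-component must carry $e^{-ik_p z\cdot\hat{x}}$ and the $\hat{x}^{\bot}$-component $e^{-ik_s z\cdot\hat{x}}$, consistent with the first index $\alpha$ in $u^{\infty}_{\alpha\beta}$ labeling the observation (radiated) mode and the second index $\beta$ the incident mode. Once these roles are tracked correctly, the stated spectral systems can be read off directly.
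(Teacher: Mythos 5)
Your proof is correct and follows exactly the route the paper intends: the paper simply states the translation relation $u^{\infty}_{B_R(z),\alpha\beta}(\hat{x})=e^{-ik_{\alpha}z\cdot\hat{x}}\,e^{ik_{\beta}z\cdot d}\,u^{\infty}_{B_R,\alpha\beta}(\hat{x})$ and reads the corollary off from it, while your argument fills in the phase-cancellation bookkeeping (incident-direction phases cancelled against the densities, observation-direction phases factored out of the $d$-integral) that the paper leaves implicit. Your component-by-component tracking of which phase attaches to $\hat{x}$ (first index $\alpha$) versus $d$ (second index $\beta$) is the correct reading and is in fact more careful than the paper's displayed matrix identity $U^{\infty}_{B_R(z)}=M_{d,z}^{-1}U^{\infty}_{B_R}M_{\hat{x},z}$, whose factor ordering appears to be a typo.
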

Furthermore, taking $\Omega = B_R(z)$, we can rewrite results of Theorems \ref{One-FM} and  \ref{One-FMpp} as
	\begin{theorem}\label{One-FMdisk}
		Define	
		\begin{equation}\label{svd-disk}
		\begin{split}
			&W(B_R(z)) := \sum_{n \in \mathbb{Z}}\sum_{j=1}^2 \frac{\left|\left\langle u_D^{\infty}, X^{(n)}_{B_R(z), j}\right\rangle_{\mathbb{S}}\right|^{2}}{\left|\lambda^{(n)}_{B_R(z), j}\right|},\\
			&W^{(\alpha)}(B_R(z)) := \sum_{n \in \mathbb{Z}} \frac{\left|\left\langle u_{D,\alpha \alpha}^{\infty}, \varphi^{(n)}_{B_R(z),\alpha}\right\rangle_{\mathbb{S}}\right|^{2}}{\left|\lambda^{(n)}_{B_R(z),\alpha}\right|},\quad \alpha=p, s
		\end{split}
		\end{equation}
		where 
		\[
		\lambda^{(n)}_{B_R(z),\alpha}
		= \left| \text{\rm Re}\left(\eta_{B_R(z),\alpha}^{(n)}\right)\right| +\left |\text{\rm Im}\left(\eta_{B_R(z),\alpha}^{(n)}\right)\right|	.			\]
		Then $W(B_R(z))<\infty$	if and only if $D \subseteq B_R(z)$ and the same conclusion applies to $W^{(\alpha)}(B_R(z))$. 
	\end{theorem}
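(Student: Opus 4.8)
The plan is to derive Theorem \ref{One-FMdisk} from the centered-disk results of Section \ref{sec:5} together with the translation formulas of Corollary \ref{C6.1}, so that no appeal to the abstract factorization \eqref{fd-svd} is needed. I keep the standing assumption that $\omega^2$ is not a Dirichlet eigenvalue of $-\Delta^\ast$ over $B_R$; as the Dirichlet spectrum of a disk is translation invariant, this depends only on $(R,\omega)$ and so also holds for $B_R(z)$. The argument chains two equivalences: $W(B_R(z))<\infty$ holds if and only if $u^s_D(\cdot;d_0)$ extends as a radiating Navier solution into $\mathbb{R}^2\backslash\overline{B_R(z)}$, and the latter holds if and only if $D\subseteq B_R(z)$; the statement for $W^{(\alpha)}$ runs in parallel with $u^s_D$ replaced by $u^s_{D,\alpha}$.

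For the first equivalence I would reduce to the centered disk. The translation factors in Corollary \ref{C6.1} assemble into the multiplication operator $M_{\hat{x},z}=\mathrm{diag}(e^{-ik_pz\cdot\hat{x}},e^{-ik_sz\cdot\hat{x}})$ acting on the P/S components in the frame $\{\hat{x},\hat{x}^{\bot}\}$, which is unitary on $(L^2(\mathbb{S}))^2$ since its entries are unimodular; moreover $\lambda^{(n)}_{B_R(z),j}=\lambda^{(n)}_{B_R,j}$ and $X^{(n)}_{B_R(z),j}=M_{\hat{x},z}X^{(n)}_{B_R,j}$. Hence $\langle u^\infty_D,X^{(n)}_{B_R(z),j}\rangle_{\mathbb{S}}=\langle M_{\hat{x},z}^\ast u^\infty_D,X^{(n)}_{B_R,j}\rangle_{\mathbb{S}}$, and $W(B_R(z))$ computed from $u^\infty_D$ equals the centered indicator $I(B_R)$ of Corollary \ref{fmdiskff} evaluated at $v^\infty:=M_{\hat{x},z}^\ast u^\infty_D$. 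Since $v^\infty$ is exactly the far-field pattern of the shifted field $u^s_D(\cdot+z)$, Corollary \ref{fmdiskff} shows $W(B_R(z))<\infty$ if and only if $u^s_D(\cdot+z)$ is a radiating Navier solution in $\mathbb{R}^2\backslash\overline{B_R}$ with $(H^{1/2}(\partial B_R))^2$ data, that is, if and only if $u^s_D$ extends into $\mathbb{R}^2\backslash\overline{B_R(z)}$. The P/S versions follow identically using the disk analogue of Corollary \ref{fm_convex_domainpp} and the eigendata $(\lambda^{(n)}_{B_R(z),\alpha},\varphi^{(n)}_{B_R(z),\alpha})$.

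The second equivalence repeats the reasoning of Theorem \ref{One-FM}. If $D\subseteq B_R(z)$ the required extension is immediate. Conversely, if $D\not\subseteq B_R(z)$ then convexity of $D$ and of $B_R(z)$ yields a corner $O\in D$ with $O\notin B_R(z)$; an extension of $u^s_D$ (respectively $u^s_{D,\alpha}$) to $\mathbb{R}^2\backslash\overline{B_R(z)}$ would analytically continue it across $O$, contradicting Lemma \ref{ana-ext}. Chaining the two equivalences gives the claim for $W$, and for $W^{(\alpha)}$ after substituting $\Gamma^\infty_\alpha$ and $P_\alpha G_{B_R(z)}$.

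I expect the only real difficulty to be the bookkeeping in the reduction step, rather than any new analysis. One must verify that $M_{\hat{x},z}^\ast u^\infty_D$ is the far field of $u^s_D(\cdot+z)$ with the correct sign in the exponentials, and that recentering at $z$ converts ``radiating in $\mathbb{R}^2\backslash\overline{B_R(z)}$'' into ``radiating in $\mathbb{R}^2\backslash\overline{B_R}$''. This matters because the explicit eigenfunctions $X^{(n)}_{B_R,j}$ need not be orthonormal in $(L^2(\mathbb{S}))^2$ --- a direct computation using $\sigma_1^{(n)}\sigma_2^{(n)}=1$ gives $\langle X^{(n)}_{B_R,1},X^{(n)}_{B_R,2}\rangle_{\mathbb{S}}\propto k_p-k_s$ --- so the reduction cannot proceed term by term and must be carried out at the level of the full indicator, where the unitarity of $M_{\hat{x},z}$ does the work. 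Once these points are checked, Theorem \ref{One-FMdisk} follows from Corollary \ref{fmdiskff}, its $\alpha$-analogue, and Lemma \ref{ana-ext} alone.
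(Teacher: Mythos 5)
Your proposal is correct, and it takes a genuinely different route from the paper's. The paper obtains Theorem \ref{One-FMdisk} in one line, as the specialization $\Omega=B_R(z)$ of Theorems \ref{One-FM} and \ref{One-FMpp}, whose proofs rest on the abstract factorization \eqref{fd-svd}, the range identity $\mathrm{Range}(G_\Omega)=\mathrm{Range}((F_\Omega^{\ast}F_\Omega)^{1/4})$ (resp.\ the $F_{\#}$-method) and Lemma \ref{ana-ext}; Section \ref{sec:5} is then offered as an independent consistency check for the centered disk only. You instead build the theorem entirely on the explicit machinery: Corollary \ref{fmdiskff} and its $\alpha$-analogue for $B_R$, the translation identities of Corollary \ref{C6.1}, and Lemma \ref{ana-ext}. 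The reduction via the unitary multiplier $M_{\hat x,z}$ is the right device --- the eigenvalues are translation invariant, $\langle u_D^{\infty},X^{(n)}_{B_R(z),j}\rangle_{\s}=\langle M_{\hat x,z}^{\ast}u_D^{\infty},X^{(n)}_{B_R,j}\rangle_{\s}$, and $M_{\hat x,z}^{\ast}u_D^{\infty}$ is precisely the far field of $u_D^{s}(\cdot+z)$, so extension into $\R^2\backslash\overline{B_R(z)}$ recenters to extension into $\R^2\backslash\overline{B_R}$ --- and the sign bookkeeping you flag does check out. What your route buys is exactly what the paper advertises but does not spell out for translated disks: a justification of the one-wave indicator that never invokes the classical factorization method or normality of $F_\Omega$. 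What it costs is the translation argument itself, plus a dependence on the asymptotic series comparisons in Subsection \ref{sec5.3}. Your observation that $X^{(n)}_{B_R,1}$ and $X^{(n)}_{B_R,2}$ need not be orthogonal (asymptotically $\langle X^{(n)}_{B_R,1},X^{(n)}_{B_R,2}\rangle_{\s}\propto k_p-k_s$) is a genuine and worthwhile caveat --- it sits in some tension with the asserted normality of $F_{B_R}$, but it does not damage your argument, since Corollary \ref{fmdiskff} is itself proved by direct series comparison rather than by the Picard theorem, and your reduction operates on the full indicator through the unitarity of $M_{\hat x,z}$ rather than term by term.
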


			

Finally, we describe our imaging scheme for solving the inverse problems {\bf IP-F},
{\bf IP-P} and {\bf IP-S} stated at the end of Section \ref{sec:2}.
Let $D$ be a convex rigid polygon to be recovered from far-field data. The procedure consists of the following steps:
	\begin{itemize}
		\item Suppose that $B_R \supset D$ for some $R>0$ and collect the measurement data $u_D^{\infty}(\hat{x})$,  $u_{D,pp}^{\infty}(\hat{x})$ or 
		$u_{D,ss}^{\infty}(\hat{x})$ for all $\hat{x} \in \mathbb{S}$. Let $Q\supset D$ be our search/computational region for imaging $D$;
		\item Choose sampling centers $z_{n} \in \Gamma_{R}:= \{x:\ |x|=R\}$ for $n = 1,2,\cdots,N$ and
		choose sampling radii $h_m\in(0,2R)$ ($m=1,2,\cdots, M$) to get the spectral systems for the operators $F_{\Omega}$, $F_{\Omega}^{(p)}$ and 	$F_{\Omega}^{(s)}$ with $\Omega=B_{h_m}(z_n)$, $n=1,2,\cdots, N$, $m=1,2,\cdots, M$ (see \eqref{eigsys-fbr} and Corollary \ref{C6.1});
		\item For each $z_n\in \Gamma_R$, define the function over the grid points $x\in Q$ satsfying $h_{m+1}>|x-z_n|\geqslant h_m$ for some $m=1,2, \cdots, M$ by (see \eqref{svd-disk}):
		\begin{equation*}
		\mathcal{I}_{n}(x)=\left\{\begin{array}{lll}
		&[W(B_{h_m}(z_n))]^{-1}\qquad\mbox{for the inverse problem {\bf IP-F}};\\
		&[W^{(p)}(B_{h_m}(z_n))]^{-1}\quad\mbox{for the inverse problem {\bf IP-P}};\\		&[W^{(s)}(B_{h_m}(z_n))]^{-1}\quad\mbox{for the inverse problem {\bf IP-S}};		\end{array}\right.  
				\end{equation*}
		
	\item The imaging function for recovering $D$ is defined as $\mathcal{I}(x)= \sum_{n=1}^{N_z}\mathcal{I}_{n}(x)$,  where $x\in Q$ are the grid points. This can be considered as imaging function over $Q$ if the grids are sufficiently fine.
	\end{itemize}
We expect the values of the indicator function $\mathcal{I}$ for grid points $x\in D$ should be larger than those for $x\in Q\backslash\overline{D}$, because 
\begin{equation*}
\begin{split}
&[W(B_{h_m}(z_n))]^{-1}=0\quad\mbox{if}\quad h_m\leq\max_{y\in \partial D}|z_n-y|;\\
&[W(B_{h_m}(z_n))]^{-1}<\infty\quad\mbox{if}\quad h_m>\max_{y\in \partial D}|z_n-y|;\end{split}
\end{equation*}
and the same indicating behavior applies to $[W^{(\alpha)}(B_{h_m}(z_n))]^{-1}$, $\alpha=p,s$.	
\begin{remark}
In implementing the above scheme, the spectral data appeared in Theorem \ref{One-FMdisk} are all given explicitly by Corollary \ref{C6.1}. For each sampling disk $B_{h_m}(z_n)$ with $n=1,2,\cdots N, m=1,2,\cdots, M$,  they can be easily calculated and stored off-line before the inversion process. 
This is just the advantageous of using testing disks instead of other testing scatterers. 
\end{remark}

	\section{Acknowledgements}
This work was supported by NSFC 12071236 and NSAF U1930402.


\begin{thebibliography}{99}
		\bibitem{kress2002}C. Alves and R. Kress, {\em On the far-filed operator in elastic obstacle scattering}, IMA J. Appl. Math., 67 (2002): 1-21.
		
	\bibitem{Habib2015} H. Ammari, E. Bretin, J. Garnier, H. Kang, H. Lee and A. Wahab, {\em  
Mathematical Methods in Elasticity Imaging},
Volume 52, Princeton Series in Applied Mathematics,
Princeton University Press,  2015. 

		\bibitem{Arens} T. Arens, {\em Linear sampling methods for 2D inverse elastic wave scattering}, Inverse Problems, 17  (2001): 1445-1464.

		\bibitem{Bao2018} G. Bao, G. Hu, J. Sun and T. Yin, {\em Direct and inverse elastic scattering from anisotropic media}, J Math Pure Appl, 117 (2018): 263-301.


		\bibitem{Bla2018} E. Bl\aa sten and Y. Lin, {\em Radiating and non-radiating sources in elasticity}, Inverse Problems, 35 (2018): 015005.

		\bibitem{MC2005} M. Bonnet and A. Constantinescu, {\em Inverse problems in elasticity}, Inverse Problems, 21 (2015): 1-50.
		   
		\bibitem{Cha2003}A. Charalambopoulos, D. Gintides and K. Kiriaki, {\em The linear sampling method for non-absorbing penetrable elastic bodies}, Inverse Problems, 19 (2003): 549-561.

		\bibitem{Cha2007}A. Charalambopoulos, A. Kirsch, K. Anagnostopoulos, D. Gintides, and K. Kiriaki, {\em The factorization method in inverse elastic scattering from penetrable bodies}, Inverse Problems, 23 (2007): 27-51.

\bibitem{HC} Z. Cheng and G. Huang, 
{\em Reverse time migration for extended obstacles: Elastic waves (in Chinese)}, Science China Mathematics 45 (2015): 1103-1114.

		\bibitem{kress.1998}D. Colton and R. Kress,  \emph{Inverse Acoustic and Electromagnetic Scattering Theory}, 4th edition, Springer, Berlin, 2019.

	\bibitem{Das2008} S. Das, S. Banerjee and T. Kundu, {\em Elastic wave scattering in a solid half-space with a circular cylindrical hole using the Distributed Point Source Method}, INT J SOLIDS STRUCT, 45 (2008): 4498-4508.


\bibitem{el-hu19} J. Elschner and G. Hu, {\em Uniqueness and factorization method for inverse elastic scattering with a single incoming wave}, Inverse Problems, 35 (2019): 094002.
		
\bibitem{GS2012} D. Gintides and M. Sini, {\em Identification of obstacles using only the scattered P-waves or the scattered S-waves},
Inverse Problems Imaging, 6 (2012): 39-55.

		\bibitem{PGC1993}P. Hahner and G. C. Hsiao, {\em Uniqueness theorems in inverse obstacle scattering of elastic waves}, Inverse Problems, 9 (1993): 525.


		\bibitem{HuKirsch2012}G. Hu, A. Kirsch and M. Sini, {\em Some inverse problems arising from elastic scattering by rigid obstacles}, Inverse Problems, 29 (2012): 015009.

		\bibitem{I1999} M. Ikehata, {\em Enclosing a polygonal cavity in a two-dimensional bounded domain from Cauchy data}, Inverse Problems, 15 (1999): 1231-1241.

		\bibitem{II2009} M. Ikehata and H. Itou,   {\em Extracting the support function of a cavity in an isotropic elastic body from a single set of boundary data}, Inverse Problems, 25 (2009): 105005.

		\bibitem{liuxd2019}X. Ji and X. Liu, \emph{Inverse elastic scattering problems with phaseless far field data}, Inverse Problems, 35 (2019): 114004. 

		\bibitem{Ji2018}X. Ji, X. Liu, and Y. Xi, {\em Direct sampling methods for inverse elastic scattering problems}, Inverse Problems, 34 (2018): 035008.

		\bibitem{K2008} A. Kirsch and N. Grinberg, {\em The Factorization Method for Inverse Problems}, NewYork: Oxford University Press, 2008.	

		\bibitem{Ku1979} V. D. Kupradze, et. al., {\em Three-Dimensional Problems of the Mathematical Theory of Elasticity and Thermoelasticity}, Amsterdam: North-Holland, 1979.

		\bibitem{K2003}S. Kusiak, R. Potthast and J. Sylvester, {\em A range test for determining scatterers with unknown physical properties}, Inverse Problems, 19 (2003): 533-547.

		\bibitem{Li2016} P. Li, Y. Wang, Z. Wang and Y. Zhao, {\em Inverse obstacle scattering for elastic waves}, Inverse Problems, 32 (2016): 115018. 

		\bibitem{Lin2021} Y. Lin, G. Nakamura, R. Potthast and H. Wang, {\em Duality between range and no-response tests and its application for inverse problems}, Inverse Problems and Imaging, 15 (2021): 367-386.
		
		\bibitem{sun2019}J. Liu, X. Liu, and J. Sun, {\em Extended sampling method for inverse elastic scattering problems using one incident wave}, SIAM J. Imaging Sci., 12 (2019): 874-892.

		\bibitem{S2018} J. Liu and J. Sun, {\em Extended sampling method in inverse scattering}, Inverse Problems, 34 (2018): 085007.	

		\bibitem{L2003} D. R. Luke and R. Potthast, {\em The no response test - a sampling method for inverse scattering problems}, SIAM J. Appl. Math., 63 (2003): 1292-1312.

		\bibitem{M-H}G. Ma and G. Hu, {\em Factorization method with one plane wave: from model-driven and data-driven perspectives}, arXiv 2101.09664.

		\bibitem{NP2013} G. Nakamura and R. Potthast, {\em Inverse Modeling - an introduction to the theory and methods of inverse problems and data assimilation}, IOP Ebook Series,  2015.


\end{thebibliography}
\end{document}